\newtheorem{theorem}{Theorem}[section]         
\newtheorem{lemma}[theorem]{Lemma}
\newtheorem{corollary}[theorem]{Corollary}  
\newtheorem{proposition}[theorem]{Proposition}
\theoremstyle{remark}      
\newtheorem{rem}[theorem]{Remark}
\newtheorem*{remn}{Remark on notation}
\theoremstyle{definition}  
\newtheorem{definition}[theorem]{Definition}
\def\T{\mathbb{T}}
\def\F{\mathbb{F}}           
\def\N{\mathbb{N}}
\def\R{\mathbb{R}}     
\def\Z{\mathbb{Z}}
\def\C{\mathbb{C}}  
\def\P{\mathcal{P}}
\def\cQ{\mathcal{Q}}
\def\bsi{\boldsymbol{i}}
\def\bsr{\boldsymbol{r}}
\def\bsc{{\boldsymbol{c}}}
\def\bsx{\boldsymbol{x}}
\def\bsz{\boldsymbol{z}}
\def\bsn{\boldsymbol{n}}
\def\bsm{\boldsymbol{m}}
\def\bss{\boldsymbol{s}}
\def\bszero{\boldsymbol{0}}
\def\bbP{\mathbb{P}}
\def \l{\ell}
\def\gradient{\nabla} 
\def\grad{\gradient}   
\def\bar#1{\overline{#1}} 
\def\norm#1{\|#1\|}
\def\vphi{\varphi}
\def\vepsilon{\varepsilon}
\def\ns{{\operatorname{ns}}}
\def\cont{\text{cont}}
\def\ns{{\operatorname{ns}}}
\def\ord{{\operatorname{ord}}}
\def\bbA{\mathbb{A}} 
\def\bbP{\mathbb{P}}
\begin{document}
\title{The Furstenberg-S\'ark\"ozy theorem for polynomials in one or more prime variables}  
\author{John R. Doyle \qquad Alex Rice}
  
\begin{abstract} We establish upper bounds on the size of the largest subset of $\{1,2,\dots,N\}$  lacking nonzero differences of the form $h(p_1,\dots,p_{\l})$, where $h\in \Z[x_1,\dots,x_{\l}]$ is a fixed polynomial  satisfying appropriate conditions and $p_1,\dots,p_{\l}$ are prime. The bounds are of the same type as the best-known analogs for unrestricted integer inputs, due to Bloom-Maynard and Arala for $\l=1$, and to the authors for $\l \geq 2$.
 
\end{abstract} 

\address{Department of Mathematics, Oklahoma State University, Stillwater, OK 74075}
\email{john.r.doyle@okstate.edu} 
\address{Department of Mathematics, Millsaps College, Jackson, MS 39210}   
\email{riceaj@millsaps.edu}

\maketitle  
\setlength{\parskip}{5pt}   
 
\section{Introduction}  
  
Over the past half century, an array of results using a variety of methods have concerned the existence of certain differences within dense sets of integers. For $X\subseteq \Z$ and $N\in \N$, let $$D(X,N)=\max \left\{|A|: A\subseteq [N], \ (A-A)\cap X \subseteq \{0\} \right\},$$ where $[N]=\{1,2,\dots,N\}$ and $A-A=\{a-b: a,b\in A\}$. In other words, $D(X,N)$ is the threshold such that a subset of $[N]$ with more than $D(X,N)$ elements necessarily contains two distinct elements that differ by an element of $X$, and in particular $D(X,N)=o(N)$ is equivalent to the statement that every set of natural numbers of positive upper density contains a nonzero difference in $X$.

Furstenberg \cite{Furst} and S\'ark\"ozy \cite{Sark1} independently established $D(S,N)=o(N)$, where $S$ is the set of squares, answering a question of Lov\'asz. While Furstenberg used ergodic methods to obtain a qualitative result, S\'ark\"ozy used Fourier analysis to show $D(S,N) \leq N(\log N)^{-1/3+o(1)}$. In the same series of papers, S\'ark\"ozy \cite{Sark3} established $D(\P-1,N)\leq N (\log\log N)^{-2+o(1)}$, where $\P$ is the set of primes, addressing a question of Erd\H{o}s. A substantial literature has developed on refinements, generalizations, and alternative proofs of these results, see for example \cite{Green}, \cite{Slip}, \cite{Lucier},  \cite{Lucier2}, \cite{Ruz}, \cite{LM}, \cite{lipan}, \cite{Lyall}, \cite{Rice}, \cite{taoblog}, \cite{wang}, \cite{mwang}, and \cite{GreenNew}. 

More generally, for $\l \in \N$ and $h\in \Z[x_1,\dots,x_{\l}]$, the following conditions are immediately necessary for $D(h(\Z^{\l}),N) = o(N)$ (resp. $D(h(\P^{\l}),N)=o(N)$), to avoid counterexamples of the form $A=d\N$. 

\begin{definition} For $\l\in \N$, a nonzero polynomial $h\in \Z[x_1,\dots,x_{\l}]$ is \textit{intersective} if for every $d\in \N$, there exists $\bsr= (r_1,\dots,r_{\l})\in \Z^{\l}$ with $d\mid h(\bsr)$. Equivalently, $h$ is intersective if for every prime $p$, there exists $\bsz_p = ((z_p)_1,\dots,(z_p)_{\l}) \in \Z_p^{\l}$ with $h(\bsz_p)=0$, where $\Z_p$ denotes the $p$-adic integers. Further, $h$ is \textit{$\P$-intersective} if $\bsr$ can always be chosen with $(r_i,d)=1$, or equivalently $\bsz_p$ can always be chosen with $(z_p)_i \not\equiv 0 \ (\text{mod }p)$, for all $1\leq i \leq \l$. 
\end{definition} 
\noindent When considering $\l\geq 2$ variables and hoping to quantitatively improve on the univariate setting, examples like $h(x,y)=(x+y)^2$ force one to impose nonsingularity conditions, the core of which is defined below. 

\begin{definition} Suppose $F$ is a field, $\ell \in \N$, and $g\in F[x_1,\dots,x_{\l}]$ is a homogeneous polynomial. We say that $g$ is \textit{smooth} if the vanishing of $g$ defines a smooth hypersurface in $\mathbb{P}^{\ell-1}$ (as opposed to $\bbA^{\l}$). In other words, $g$ is smooth if the system $ g(\bsx)=\frac{\partial g}{\partial x_1}(\bsx)=\cdots=\frac{\partial g}{\partial x_{\l}}(\bsx)=0 $ has no solution besides $x_1=\cdots=x_{\l}=0$ in $\bar{F}^{\l}$. For a general polynomial $h\in F[x_1,\dots,x_{\ell}]$ with  $h=\sum_{i=0}^k h^i$, where $h^i$ is homogeneous of degree $i$ and $h^k\neq 0$, we say $h$ is \textit{Deligne} if the characteristic of $F$ does not divide $k$ and $h^k$ is smooth. When considering polynomials with integer coefficients, we use the terms \textit{smooth} and \textit{Deligne} as defined above by embedding the coefficients in the field of rational numbers. For $\l=1$, all nonconstant polynomials are Deligne.

\end{definition}

\begin{remn} For the remainder of the paper, we take the notational convention that, for a polynomial $h$, $h^i$ denotes the degree-$i$ homogeneous part of $h$, as opposed to $h$ raised to the $i$-th power.
 
\end{remn}

For $k,\l\geq 2$, let $$\mu(k,\l)=\begin{cases} [(k-1)^2+1]^{-1} & \text{if }\l=2 \\ 1/2 &\text{if }\l \geq 3  \end{cases}, \quad \mu'(k,\l)=\begin{cases} [2(k-1)^2+6]^{-1} & \text{if }\l=2 \\ 1/4 &\text{if }\l \geq 3  \end{cases}. $$ The current knowledge of upper bounds for  $D(h(\Z^{\l}),N)$ for $h\in \Z[x_1,\dots,x_{\l}]$ is summarized in Theorem \ref{mainold} below. The general $\l=1$ case is due to Arala, combining efforts of Bloom and Maynard \cite{BloomMaynard} and the second author \cite{ricemax}, each of which built upon work of Pintz, Steiger, and Szemer\'edi \cite{PSS}. The $\l\geq 2$ case is due to the authors \cite{DR}. The term \textit{strongly Deligne} denotes a large subclass of Deligne, intersective polynomials, the precise definition of which we delay to Section \ref{auxsec}, and sufficient conditions for which we list in Theorem \ref{Dcrit}.

\begin{theorem} \label{mainold} If $h\in \Z[x_1,\dots,x_{\l}]$ is strongly Deligne of degree $k\geq 2$, then $$D(h(\Z^{\ell}),N) \ll_h \begin{cases} N(\log N)^{-c\log\log\log N}  & \text{if }\l=1\\ N\exp(-c(\log N)^{\mu(k,\l)}) & \text{if } \l\geq 2  \end{cases},$$ where $c=c(h)>0$.
\end{theorem}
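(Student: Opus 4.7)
The plan is to observe that Theorem \ref{mainold} is not new: the $\ell=1$ bound is due to Arala, combining Bloom-Maynard \cite{BloomMaynard} and the second author \cite{ricemax} (each building on Pintz-Steiger-Szemer\'edi \cite{PSS}), while the $\ell\geq 2$ bound is the main result of \cite{DR}. So a ``proof'' here is really a pointer to those works; nonetheless, I would sketch the common Fourier-analytic density-increment skeleton underlying both, to explain where the two separate quantitative bounds come from.

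The core iteration runs as follows. If $A\subseteq[N]$ has density $\delta$ and $(A-A)\cap h(\Z^\ell)\subseteq\{0\}$, one studies the counting operator
$$S(A) \;=\; \sum_{\bsn\in[-H,H]^\ell}\,\sum_{x\in[N]}\mathbf{1}_A(x)\,\mathbf{1}_A\bigl(x+h(\bsn)\bigr)$$
for a judiciously chosen $H=H(N,\delta)$. The no-difference hypothesis forces $S(A)$ to equal its trivial diagonal contribution, while a circle-method decomposition of the inner Fourier integral into major and minor arcs produces a main term of order $\delta^2 H^\ell N$; reconciling these two facts forces some Fourier coefficient $|\widehat{\mathbf{1}_A}(a/q)|$ to be large at a rational $a/q$ with small denominator. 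Intersectivity then converts the Fourier correlation into a genuine density increment of $A$ on an arithmetic progression whose common difference is controlled by $q$ (via the $p$-adic roots $\bsz_p$ guaranteed for each $p\mid q$), and the process iterates. Tracking the modulus growth with the structural lemmas of Section \ref{auxsec} yields an inequality $N\leq F(\delta)$ that inverts to the claimed shape.

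The quantitative form of $F$ is dictated by the minor-arc bound for $\sum_{\bsn} e(\alpha h(\bsn))$. For $\ell\geq 2$, the Deligne hypothesis lets one apply Weil/Deligne bounds for exponential sums on smooth projective hypersurfaces, giving power savings whose exponent is $[(k-1)^2+1]^{-1}$ when $\ell=2$ and a uniform $1/2$ when $\ell\geq 3$ (via Birch-type passage to systems); this matches the definition of $\mu(k,\ell)$ and produces the $\exp(-c(\log N)^{\mu(k,\ell)})$ bound after inverting the iteration. For $\ell=1$ only a Weyl-type exponent of size $\sim 1/k^{2}$ is available, and the additional $(\log\log\log N)$ gain comes from the PSS-style double iteration refined by the Bloom-Maynard sparse-set $L^q$ Fourier estimate, packaged by Arala to accommodate an arbitrary intersective base polynomial of degree $k$.

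The main obstacle, were one to redo this from scratch rather than cite, would be the intersective bookkeeping: at each iteration step one passes to a progression of larger modulus, and the logarithmic cost of this passage must remain subordinate to the Fourier savings throughout. The ``strongly Deligne'' hypothesis is tailored precisely to ensure this, so in practice I would import the conductor-growth lemmas of Section \ref{auxsec} wholesale and feed them into the minor-arc analysis of \cite{DR}, \cite{BloomMaynard}, and \cite{ricemax}.
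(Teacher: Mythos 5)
You are right that the paper offers no proof of Theorem \ref{mainold}: it is stated as a summary of prior work, with the $\l=1$ case attributed to Arala building on Bloom--Maynard \cite{BloomMaynard} and \cite{ricemax} (after \cite{PSS}), and the $\l\geq 2$ case to \cite{DR}, exactly as you say, and your sketch matches the density-increment/circle-method framework those papers (and this one, in the prime-input setting) actually use. One small caveat: the exponents $\mu(k,\l)$ are governed not by the minor-arc Weyl-type bounds but by the major-arc local exponential sum estimates (Deligne's theorem over $\F_p$ applied directly, not a Birch-type reduction), which determine the per-step increment $\theta(h,\delta)$ and hence the number of iterations.
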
 
\noindent The fact that every intersective polynomial is strongly Deligne when $\l=1$ follows from \cite[Lemma 28]{Lucier}, as discussed in Remark \ref{lucsd}, while the remaining criteria below are established in \cite{DR}. 
\begin{theorem} \label{Dcrit} An intersective, Deligne polynomial $h\in \Z[x_1,\dots,x_{\l}]$ of degree $k\geq 2$ is strongly Deligne if it meets any of the following conditions: 

\begin{enumerate}[(i)] \item $\l \neq 2$, \item There exist $a,b\in \Z$ such that $h(a,b)=0$ and the highest and lowest degree parts of $h(x+a,y+b)$ are smooth,  \item  For an irreducible factorization $h=g_1\cdots g_n$ in $\bar{\Z}[x_1,\dots,x_{\l}]$ and all but finitely many $p\in \P$, $g_i$ has coefficients in $\Z_p$ for some $1\leq i \leq n$, \item For all but finitely many $p$, there exists a $p$-adic integer root of $h$ of multiplicity $1$ or $k$. In particular, this condition holds when $k=2$.
\end{enumerate} 
\end{theorem}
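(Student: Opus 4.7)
The plan is to treat the four conditions as separate sufficient criteria, since ``strongly Deligne'' (as defined in Section \ref{auxsec}) is a $p$-local hypothesis on the existence of $p$-adic roots of $h$ whose local behavior is controlled enough to feed into the circle-method machinery used to prove Theorem \ref{mainold}. For each prime $p$ (or all but finitely many), one needs a $p$-adic solution $\bsz_p$ of $h(\bsz_p)=0$ at which either a Hensel-type lifting or a local model of the polynomial is available, together with compatible global information coming from the smoothness of $h^k$. Each of (i)--(iv) offers a different mechanism for producing such a root, so the argument is really a case analysis.

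For (i), the case $\l=1$ is immediate from \cite[Lemma 28]{Lucier} (via Remark \ref{lucsd}): in one variable a nonsingular $p$-adic root can always be extracted from an intersective polynomial, so the strongly Deligne condition collapses to ordinary intersectivity. For $\l\geq 3$, the smooth hypersurface defined by $h^k$ in $\mathbb{P}^{\l-1}$ has positive dimension $\geq 2$; combined with the Lang--Weil estimates and the $p$-adic approximation provided by intersectivity, one can locate a nonsingular $p$-adic zero by a generic projection argument that fails precisely in the $\l=2$ case, where the zero set of $h^k$ is a finite collection of points in $\mathbb{P}^1$ and no such dimension buffer exists. This is why conditions (ii)--(iv) are needed specifically to cover $\l=2$ (and give alternative routes in higher dimensions).

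For (ii), translating so that $(a,b)=(0,0)$ makes $h$ vanish at the origin with both its highest-degree part $h^k$ and its lowest-degree part (which is linear or higher) smooth. Intersectivity then implies that the common $p$-adic root set is nonempty, and the smoothness of the two extreme homogeneous parts provides enough Jacobian information to run Hensel-type lifting from the origin, which is exactly what is needed for the strongly Deligne condition. For (iii), fix a prime $p$ at which one of the irreducible factors $g_i \in \overline{\Z}[x_1,\dots,x_\l]$ has coefficients in $\Z_p$. Intersectivity of $h$ together with the factorization $h=\prod g_j$ forces a $p$-adic zero of $h$ to lie on some factor; by selecting the $\Z_p$-defined factor and using that it is geometrically irreducible and Deligne (so that its smooth locus is $p$-adically dense in its zero set), one produces the required $\bsz_p$. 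For (iv), a $p$-adic root of multiplicity $1$ is literally a nonsingular zero of $h$ (so Hensel applies directly), while a root of multiplicity $k$ corresponds locally to $h$ agreeing with a constant multiple of a $k$-th power, in which case the smoothness of $h^k$ yields an explicit local model; the $k=2$ subcase is automatic since the only possible multiplicities of a root of a degree-$2$ polynomial are $1$ and $2$.

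The main obstacle across all four parts is the same: controlling the $p$-adic local structure at the finitely many ``bad'' primes where reductions may be singular or where irreducible factors may fail to be $\Z_p$-defined, and stitching these local statements together into the global strongly Deligne condition. In practice this is handled by absorbing the bad primes into the implicit constant $c=c(h)$ and using that the strongly Deligne condition only needs to hold away from a finite set. Once each local construction is in hand, verifying that it matches the formal definition given in Section \ref{auxsec} is routine bookkeeping, and the bulk of the work for (ii)--(iv) is carried out in \cite{DR}.
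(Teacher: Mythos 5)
The paper does not actually prove Theorem \ref{Dcrit} here: part (i) for $\l=1$ is attributed to \cite[Lemma 28]{Lucier} via Remark \ref{lucsd}, and the remaining criteria are cited from \cite{DR}, with Section 4 carrying out the parallel arguments only for the $\P$-Deligne analogue (Theorem \ref{Pcrit}). Your deferral to \cite{DR} is therefore consistent with the paper, but your sketch of the mechanisms misidentifies what has to be proved, and this is a genuine gap rather than a stylistic difference. Being strongly Deligne is \emph{not} the assertion that $h$ has well-behaved ($p$-adically liftable) roots --- intersectivity already supplies roots $\bsz_p$ for every $p$. It is the assertion that the roots can be \emph{chosen} so that for all $d$ and all $p$ outside a finite set, the reduction mod $p$ of the auxiliary polynomial $h_d(\bsx)=h(\bsr_d+d\bsx)/\lambda(d)$ is Deligne. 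The danger is the division by $\lambda(d)=\prod p^{m_p v_p(d)}$: the top-degree form of $h(\bsr_d+d\bsx)$ is $d^k h^k(\bsx)$, so when $p\mid d$ and $m_p<k$ the leading form of $h_d$ acquires positive $p$-valuation and dies mod $p$, and the new leading form of the reduction comes from lower-order Taylor data and need not be smooth. Every one of (i)--(iv) is engineered to rule this degeneration out, and none of your four case discussions engages with it; ``Hensel-type lifting from the origin'' in (ii) and ``Hensel applies directly'' in (iv) address root production, which is not the issue.

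Two specific assertions are also wrong. In (i), for $\l\geq 3$ the hypersurface $\{h^k=0\}\subseteq\bbP^{\l-1}$ has dimension $\l-2$, which is $1$ when $\l=3$, not ``$\geq 2$''; and the actual mechanism is not a generic projection but the observation (made explicitly in Section 4) that a reducible hypersurface in $\bbP^{\l-1}$ with $\l\geq 3$ is necessarily singular, so a Deligne polynomial in $\geq 3$ variables is geometrically irreducible and condition (iii) applies vacuously. Condition (iii) in turn is handled by Lang--Weil (Lemma \ref{langweillem}): the $\Z_p$-defined geometric component has $\gg p^{r}$ nonsingular $\F_p$-points for large $p$, one of which Hensel-lifts to a root of multiplicity $m_p=1$, reducing (i) and (iii) to the multiplicity-one case of (iv). In (iv), a root of multiplicity $k$ does not mean $h$ is locally a constant times a $k$-th power (a $k$-th power of a linear form is maximally singular, contradicting smoothness of $h^k$); it means the Taylor expansion $h(\bsz_p+\bsy)$ is purely homogeneous of degree $k$, hence equals $h^k(\bsy)$, whence $h_d$ reduces mod $p$ to a unit multiple of $h^k$ and is Deligne --- and symmetrically $m_p=1$ forces $\lambda(d)$ to divide out exactly the right power so the degree-$k$ form survives. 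Making those two computations, and the analogous one for (ii) where the surviving leading form is either the top or the bottom homogeneous part of the translate (hence the hypothesis that \emph{both} be smooth), is the actual content of the theorem.
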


Our main results are as follows. Analogous to strongly Deligne, the term \textit{$\P$-Deligne} refers to a large subclass of Deligne, $\P$-intersective polynomials, the precise definition of which is provided in Section \ref{auxsec}.

\begin{theorem} \label{mainPint} If $h\in \Z[x_1,\dots,x_{\l}]$ is $\P$-Deligne of degree $k\geq 2$, then $$D(h(\P^{\ell}),N) \ll_h \begin{cases} N(\log N)^{-c\log\log\log N}  & \text{if }\l=1\\ N\exp(-c(\log N)^{\mu'(k,\l)}) & \text{if } \l\geq 2  \end{cases},$$ where $c=c(h)>0$.
\end{theorem}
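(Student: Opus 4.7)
The plan is to import the density-increment scheme used to prove Theorem \ref{mainold} and drive it with a Hardy--Littlewood analysis of von Mangoldt-weighted exponential sums. For $A \subseteq [N]$ of density $\delta$ avoiding $h(\P^{\ell}) \setminus \{0\}$, I would first use $\P$-intersectivity to pass to a suitable shift of $h$ that admits $p$-adic roots with every coordinate coprime to $p$ at every prime, and then set up
\[
T(\alpha) = \sum_{\bsn \in [X]^{\ell}} \Lambda(n_1)\cdots\Lambda(n_{\ell})\, e\bigl(\alpha h(\bsn)\bigr),
\]
with $X$ calibrated so that $h(\bsn) \in [N]$ on the support. As in \cite{DR}, the crux is to combine an $L^\infty$ minor-arc bound with a major-arc asymptotic to produce either a direct count of differences from $h(\P^{\ell})$ in $A$ or a large Fourier coefficient, then convert the latter into a density increment on a long sub-progression and iterate.

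For $\ell \geq 2$, the essentially new ingredient is the minor-arc bound. I plan to apply Vaughan's identity to decompose $\Lambda$ into Type~I and Type~II contributions. Type~I sums, after freezing all but one prime variable, reduce to inner Weyl sums of the form $\sum_{n \leq Y} e\bigl(\alpha h(mn, n_2, \dots, n_{\ell})\bigr)$, which can be handled by the integer-input Weyl estimates from \cite{DR}. Type~II sums, after one Cauchy--Schwarz and a Weyl differencing, reduce to counts of integer solutions to a system controlled by the first derivatives of $h^k$; here the smoothness of $h^k$ (the Deligne hypothesis) provides Weil/Deligne-type bounds on the associated complete character sums modulo primes. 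The square-root loss from Cauchy--Schwarz is exactly what turns $\mu(k, \ell)$ into $\mu'(k, \ell)$: it accounts for the transition $(k-1)^2 + 1 \mapsto 2(k-1)^2 + 6$ when $\ell = 2$ and the halving $1/2 \mapsto 1/4$ when $\ell \geq 3$.

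On the major arcs, the usual local analysis yields a product of archimedean and $p$-adic densities. The $\P$-Deligne hypothesis is designed precisely to keep every local factor positive when inputs are restricted to units modulo $p$, and a standard pruning argument then gives a singular series bounded below by an absolute constant. The resulting main term $\gg \delta^2 N (\log X)^{-O(1)}$ dominates the minor-arc bound so long as $\delta$ stays above the stated threshold, at which point one extracts a large Fourier coefficient localized to a major arc, hence a sub-progression of appropriate length on which the density of $A$ increases by $\eta \gg \delta$. Iterating yields the bound $\exp(-c (\log N)^{\mu'(k, \ell)})$ for $\ell \geq 2$. For $\ell = 1$, I would instead overlay this prime-weighted framework onto the sieve-enhanced iteration of Bloom--Maynard \cite{BloomMaynard} (as refined by Arala), using the $\P$-intersective shift to make the W-trick compatible with the prime restriction, yielding the $(\log N)^{-c \log\log\log N}$ bound.

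The principal technical obstacle will be the Type~II reduction, where one must verify that the auxiliary modular reductions introduced by Vaughan's identity preserve enough of the geometric structure of $h^k$ for the Deligne-type bounds of \cite{DR} to transfer with the same exponent of saving; a secondary concern is the lower bound on the singular series in the prime-restricted setting, which is exactly where the $\P$-Deligne hypothesis (as opposed to plain Deligne) is essential, since one requires $p$-adic roots with every coordinate invertible at every prime.
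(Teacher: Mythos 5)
Your overall architecture (density increment driven by a circle-method count of prime-input polynomial differences, with the $\P$-Deligne hypothesis controlling the local factors) matches the paper, but you have misdiagnosed the single most important quantitative point: where the degradation from $\mu(k,\l)$ to $\mu'(k,\l)$ comes from. You attribute it to a square-root loss from Cauchy--Schwarz in a Type~II minor-arc estimate. In the paper the minor arcs contribute no such loss: the minor-arc bound is obtained by freezing all but one variable and quoting an existing one-variable Weyl inequality for primes in progressions (Lemma~\ref{weyl3}, from \cite{Rice} and \cite{lipan}), with no fresh Vaughan/Type~II analysis needed. The true source of the weaker exponent is the \emph{major-arc} side, specifically the distribution of primes in arithmetic progressions to large moduli. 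The increment iteration forces the auxiliary modulus $d$ to grow like $\delta^{-2m}$, so one must count primes in progressions with moduli up to $\exp(c\sqrt{\log N})$ --- far beyond the Siegel--Walfisz range --- and the unconditional error term $\exp(-c\log x/(\sqrt{\log x}+\cdots))$ from the classical zero-free region is exactly what caps the modulus at $\cQ=\exp(c_0\sqrt{\log N})$ and produces $\mu'$ rather than $\mu$. This is confirmed by the paper's remark that under GRH the exponent improves to $2\mu'(k,\l)$: if the loss lived on the minor arcs as you claim, GRH would not recover it.

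This misdiagnosis leaves a genuine gap in your plan: you never say how you will count primes in progressions to moduli of size $\exp(c\sqrt{\log N})$, and in particular you do not address potential Siegel zeros. The paper handles this with the Ruzsa--Sanders dichotomy (Lemma~\ref{RS}): one fixes a possible exceptional modulus $q_0$ and character $\chi$ at the outset, pre-partitions $[N]$ into progressions modulo $\lambda(q_0)$ so that $q_0\mid d$ throughout the iteration, and carries the secondary term $\chi(a)x^{\rho}/\vphi(q)\rho$ through every main-term computation, using the elementary bound \eqref{x/q0} to keep the main term positive. Without this device your ``singular series bounded below by an absolute constant'' claim is unjustified for the moduli that actually arise, and restricting to Siegel--Walfisz moduli $q\le(\log N)^{O(1)}$ would only yield bounds of strength roughly $\exp(-c(\log\log N)^{c'})$, far short of the theorem. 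Separately, your proposed Type~II reduction for the minor arcs --- which you yourself flag as the principal obstacle --- is unnecessary: cancellation in a single prime variable suffices, which is why the paper can treat the minor arcs by citation rather than by a new multivariate argument.
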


\noindent As before, when $\l=1$, all $\P$-intersective polynomials are $\P$-Deligne. The $\l=1$ case of Theorem \ref{mainPint} improves upon the previous bound $D(h(\P),N)\leq N(\log N)^{-\frac{1}{2k-2}+o(1)}$ for $\P$-intersective $h\in \Z[x]$ of degree $k\geq 2$, due to the second author \cite{Rice}.

\begin{rem} Under GRH, the $\l\geq 2$ bounds in Theorem \ref{mainPint} holds with exponent $2\mu'(k,\l)$. Further, unconditional results with exponent between $\mu(k,\l)$ and $\mu'(k,\l)$ may be possible using techniques from \cite{wang}.
\end{rem}

\begin{theorem}\label{Pcrit} A $\P$-intersective, Deligne polynomial $h\in \Z[x_1,\dots,x_{\l}]$ of degree $k\geq 2$ is $\P$-Deligne if it meets any of the following conditions:

\begin{enumerate}[(i)] \item $\l \neq 2$, \item There exist $a,b\in \{-1,1\}$ such that $h(a,b)=0$ and the highest and lowest degree parts of $h(x+a,y+b)$ are smooth,  \item  Let $h=g_1\cdots g_n$ be an irreducible factorization in $\bar{\Z}[x_1,\dots,x_{\l}]$. For all but finitely many $p\in \P$, there exists $1\leq i \leq n$ such that $g_i$ has coefficients in $\Z_p$ and $x_j\nmid g_i$ for all $1\leq j \leq \l$, \item For all but finitely many $p$, there exists a $p$-adic integer root $\bsz_p$ of $h$, satisfying $(z_p)_i \not\equiv 0 \ (\textnormal{mod }p)$ for $1\leq i \leq \l$, of multiplicity $1$ or $k$. In particular, this condition holds when $k=2$.

\end{enumerate}

\end{theorem}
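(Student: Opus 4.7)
The plan is to parallel the proof of Theorem \ref{Dcrit} from \cite{DR}, replacing the construction of $p$-adic roots there with roots whose coordinates are additionally units in $\Z_p$, as demanded by the $\P$-intersective condition. Since $\P$-Deligne is defined in Section \ref{auxsec} as a $\P$-intersective strengthening of Deligne requiring, for each of a cofinite set of primes, a $p$-adic root $\bsz_p$ with each $(z_p)_i$ coprime to $p$ around which $h(\bsx+\bsz_p)$ exhibits controlled homogeneous parts, each of (i)--(iv) will be verified by exhibiting such a root together with the requisite smoothness.

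For condition (i), the case $\l=1$ is immediate: every nonconstant univariate polynomial is Deligne, the smoothness requirements on homogeneous parts become trivial in $\mathbb{P}^0$, and the $\P$-intersective hypothesis directly furnishes the needed $p$-adic root at each prime. For $\l\geq 3$, the preservation argument used for Theorem \ref{Dcrit}(i) applies essentially verbatim, since translation does not alter the top homogeneous part and the arguments used in \cite{DR} work equally well starting from a $\P$-intersective root rather than a merely intersective one. Condition (ii) is essentially formal: if $(a,b)\in\{-1,1\}^2$ is a global integer root of $h$ with both the top and bottom homogeneous parts of $h(x+a,y+b)$ smooth, then $(a,b)$ reduces to a pair of nonzero residues modulo every odd prime, providing a consistent $p$-adic root with all coordinates coprime to $p$ and with the required smoothness by direct reduction. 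Condition (iv) is handled by Hensel's lemma in the multiplicity-$1$ case, while in the multiplicity-$k$ case the Taylor expansion $h(\bsx+\bsz_p)$ has no terms of degree less than $k$, so the smoothness prescribed by $\P$-Deligne reduces to the Deligne hypothesis on $h^k$; for $k=2$ these two possibilities are exhaustive.

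The main obstacle is condition (iii), where one is given an irreducible factor $g_i\in\Z_p[\bsx]$ of $h$ with $x_j\nmid g_i$ for every $j$, and must produce a $p$-adic root of $h$ with every coordinate a unit. The restriction $x_j\nmid g_i$ ensures that the reduction $\bar{g}_i\in\F_p[\bsx]$ does not vanish identically on any coordinate hyperplane, so for all sufficiently large $p$, the zero locus of $\bar{g}_i$ contains a smooth $\F_p$-point off the union of these hyperplanes by a standard Lang--Weil-type point count; Hensel's lemma then lifts this point to a root $\bsz_p\in\Z_p^{\l}$ of $g_i$, and hence of $h$, with $(z_p)_j\not\equiv 0\pmod p$ for every $j$. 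It remains to verify that translating $h$ by this $\bsz_p$ yields homogeneous parts with the smoothness prescribed by $\P$-Deligne, which follows from the argument for Theorem \ref{Dcrit}(iii) in \cite{DR}, with the additional coprimality information controlling the interaction of $g_i$ with the remaining irreducible factors at the lifted point.
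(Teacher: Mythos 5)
Your overall route is the paper's: every part of Theorem \ref{Pcrit} is reduced to the corresponding argument of \cite{DR}, and the one genuinely new ingredient --- a Lang--Weil count that avoids the coordinate hyperplanes, so that the nonsingular point one produces (and then lifts by Hensel) has all coordinates prime to $p$ --- is exactly what you supply for condition (iii); this matches the paper's Lemma \ref{lem:equiv} and Proposition \ref{Zbar}. Your treatments of (ii) and (iv) are likewise the paper's, with two caveats worth recording: $\pm 1$ are units modulo $2$ as well, so there is no need to except the even prime in (ii); and in (iv) the content is not Hensel's lemma but the verification that the entire family of auxiliary polynomials $h_d$, $d\in\N$, stays Deligne modulo every prime outside a fixed finite set --- $\P$-Deligne is a condition on all the $h_d$ (translation, dilation by $d$, and division by $\lambda(d)$), not merely on $h(\bsx+\bsz_p)$, and this verification is \cite[Proposition 2.8]{DR}, which you are implicitly citing.

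The genuine gap is your argument for (i). For $\l\geq 3$ you assert that the proof of Theorem \ref{Dcrit}(i) ``works equally well starting from a $\P$-intersective root rather than a merely intersective one.'' That is not how that proof goes: it does not start from a given root --- the $\P$-intersective hypothesis only guarantees \emph{some} unit-coordinate root, which may have multiplicity greater than $1$ and be useless for controlling the $h_d$ --- rather it \emph{constructs} a fresh multiplicity-one root at almost every prime via geometric irreducibility and Lang--Weil, and here that construction must additionally land off the coordinate hyperplanes, which your justification for (i) never arranges. The correct and short fix is the paper's: a Deligne polynomial in $\l\geq 3$ variables is geometrically irreducible (a reducible hypersurface in $\bbP^{\l-1}$ with $\l-1\geq 2$ is singular), and being irreducible of degree $k\geq 2$ it is not divisible by any $x_j$, so (i) for $\l\geq 3$ is literally a special case of (iii), for which you already have the right argument. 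For $\l=1$, ``the $\P$-intersective hypothesis directly furnishes the needed root'' is likewise insufficient as stated: one must know that $h_d$ remains nonconstant modulo $p$ for all $d$ and all $p$ outside a fixed finite set, which is Lucier's bounded-content lemma (Remark \ref{lucsd}), not a triviality about smoothness in $\bbP^0$.
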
 


\begin{rem} The conclusions of Theorems \ref{mainold} and \ref{mainPint} hold under a more general condition. It suffices that $h$ can be written as $h(\bsx_1,\dots,\bsx_s) = h_1(\bsx_1)+\cdots+h_s(\bsx_s)$, where $h_i\in \Z[x_1,\dots,x_{\l_i}]$ is strongly Deligne (resp. $\P$-Deligne) for $1\leq i \leq s$, and $\l_1+\cdots+\l_s = \l$. In this case, the relevant exponential sums factor as products, reducing the problem to the treatment of strongly Deligne (resp. $\P$-Deligne) polynomials in fewer variables. In particular, this includes the diagonal case where $h(x_1,\dots,x_{\l})=h_1(x_1)+\cdots+h_{\l}(x_{\l})$ with $h_1,\dots,h_{\l} \in \Z[x]$ intersective (resp. $\P$-intersective), as treated in \cite{ricemax}. We stick to the conditions as stated in Theorem \ref{mainPint} for ease of exposition.
\end{rem}

As discussed in Section 1.3 in \cite{DR}, known lower bounds for $D(h(\Z^{\l}),N)$ (which also bound $D(h(\P^{\l}),N)$) take the form $N^{1-c}$, so are far removed from known upper bounds, and all are derived from a construction of Ruzsa \cite{Ruz2} (see also \cite{Lewko}, \cite{younis}). Another construction of Ruzsa \cite{Ruz3} shows $D(\P-1,N) \gg N^{c/\log\log N}$, as compared with Green's \cite{GreenNew} breakthrough upper bound $D(\P-1,N) \ll N^{1-c}$.

\section{Preliminaries}

\subsection{Auxiliary polynomials and $\P$-Deligne definition} \label{auxsec}
We employ a standard density increment procedure, which takes as input a set lacking nonzero differences in the image of a polynomial, and produces a new, denser subset of a slightly smaller interval lacking nonzero differences in the image of a potentially modified polynomial. The following definition keeps track of the changes in the polynomial over the course of the iteration.

\begin{definition} Suppose $h\in \Z[x_1,\dots,x_{\ell}]$ is an intersective polynomial and fix, for each prime $p$, $\bsz_p\in \Z_p^{\ell}$ with $h(\bsz_p)=0$. All objects defined below depend on this choice of $p$-adic integer roots, but we suppress that dependence in the subsequent notation. 

\noindent By reducing modulo prime powers and applying the Chinese Remainder Theorem, the choice of roots determines, for each $d\in \N$, a unique $\bsr_d \in (-d,0]^{\ell}$ with $\bsr_d \equiv \bsz_p \ \text{mod }p^j$ for all prime powers $p^j\mid d$.
  
\noindent We define a completely multiplicative function $\lambda$ (depending on $h$ and $\{\bsz_p\}$) on $\N$ by letting $\lambda(p)=p^{m_p}$ for each prime $p$, where $m_p$ is the multiplicity of $\bsz_p$ as a root of $h$, in other words $$m_p=\min\left\{i_1+\cdots+i_{\l} : \frac{\partial^{i_1+\cdots+i_{\l}}h}{\partial x_1^{i_1}\cdots \partial x_{\l}^{i_{\l}}} (\bsz_p) \neq 0\right\}. $$
\noindent For each $d\in \N$, we define the \textit{auxiliary polynomial}, $h_d\in \Z[x_1,\dots,x_{\l}]$, by $$h_d(\bsx)=h(\bsr_d+d\bsx)/\lambda(d). $$
\end{definition}

\begin{definition} We say that $h$ is \textit{strongly Deligne} if there exists a finite set of primes $X=X(h)$ and a choice $\{\bsz_p\}_{p\in \P}$ of $p$-adic integer roots of $h$ such that the reduction of $h_d$ modulo $p$ is Deligne for all $p\notin X$ and all $d\in \N$. Analogously, we say $h$ is  \textit{$\P$-Deligne} if such a choice  $\{\bsz_p\}_{p\in \P}$ exists with $(z_p)_i \not\equiv 0 \ (\text{mod }p)$ for all $p$ and all $1\leq i \leq \l$. We note that all strongly Deligne polynomials are both Deligne and intersective, and all $\P$-Deligne polynomials are both Deligne and $\P$-intersective. 
\end{definition} 

\begin{rem}\label{lucsd} For $\l=1$, all nonconstant polynomials are Deligne, so it follows from \cite[Lemma 28]{Lucier} that all intersective polynomials are strongly Deligne and all $\P$-intersective polynomials are  $\P$-Deligne, where $X$ is the set of primes that could simultaneously divide all nonconstant coefficients of an auxiliary polynomial. 
\end{rem}

\subsection{Counting Primes in Arithmetic Progressions}
 
For $x>0$ and $a,q\in \N$, we define 
\begin{equation*} \psi(x,a,q) = \sum_{\substack{p \leq x \ \text{prime} \\ p\equiv a (\text{mod } q)}} \log p.
\end{equation*} 
Classical estimates on $\psi(x,a,q)$ come from the Siegel-Walfisz Theorem, which can be found for example in Corollary 11.19 of \cite{MV}. Ruzsa and Sanders \cite{Ruz} established asymptotics for $\psi(x,a,q)$ for certain moduli $q$ beyond the usual limitation $q\leq (\log x)^C$ by exploiting a dichotomy based on exceptional zeros, or lack thereof, of Dirichlet $L$-functions. In particular, the following result follows from their work.

\begin{lemma} \label{RS} For any $Q,D>0$, there exist $q_0 \leq Q^{D}$ and $\rho \in [1/2,1)$ with $(1-\rho)^{-1} \ll q_0$ such that 
\begin{equation}\label{lb} \psi(x,a,q)= \frac{x}{\vphi(q)}-\frac{\chi(a)x^{\rho}}{\vphi(q)\rho}+O\left(x \exp\left(-\frac{c\log x}{\sqrt{\log x}+D^2\log Q}\right)D^2\log Q\right),
\end{equation}
where $\chi$ is a Dirichlet character modulo $q_0$, provided $q_0 \mid q$, $(a,q)=1$,  and $q \leq (q_0Q)^{D}$.
\end{lemma}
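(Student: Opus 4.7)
The plan is to derive the lemma from the Landau-Page theorem on exceptional zeros of Dirichlet $L$-functions, the truncated explicit formula for $\psi(x,\chi)$, and the class-number lower bound for $L(1,\chi)$ of real primitive characters. The structure of the asymptotic, in which an optional term $\chi(a)x^\rho/(\vphi(q)\rho)$ is subtracted, directly reflects the Landau-Page dichotomy: either a unique real primitive character carries an exceptional zero that must be isolated, or no such character exists and the term is absorbed by the error.

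First I would apply Landau-Page at level $Q^D$: there is an absolute constant $c_0>0$ such that at most one primitive Dirichlet character of conductor at most $Q^D$ has $L(s,\chi)$ vanishing in the region $\Re s > 1 - c_0/(D\log Q)$, and if it exists, it is real and the zero is simple. If such a real primitive character $\chi$ of conductor $q_0 \leq Q^D$ with real zero $\rho$ exists, take them; otherwise set $q_0 = 1$, $\rho = 1/2$, and $\chi$ the trivial character, in which case the exceptional term is subsumed by the error. The required bound $(1-\rho)^{-1} \ll q_0$ in the nontrivial case comes from $L(1,\chi) \gg q_0^{-1/2}$ (from the Dirichlet class-number formula for real primitive characters) combined with the mean-value estimate $1 - \rho \gg L(1,\chi)/\log^2 q_0$, yielding $(1-\rho)^{-1} \ll \sqrt{q_0}\log^2 q_0 \ll q_0$.

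With $q_0$ and $\rho$ fixed, for $q$ with $q_0 \mid q$, $(a,q)=1$, and $q \leq (q_0 Q)^D$, I would invoke orthogonality to write
\begin{equation*}
\psi(x,a,q) = \frac{1}{\vphi(q)} \sum_{\chi' \bmod q} \bar{\chi'}(a) \psi(x,\chi'),
\end{equation*}
and apply the truncated explicit formula to each $\psi(x,\chi')$. The principal character yields the main term $x/\vphi(q)$; the character mod $q$ induced by $\chi$ (well-defined since $q_0 \mid q$) contributes $-\chi(a)x^\rho/(\vphi(q)\rho)$ via its exceptional zero, using $\bar{\chi}=\chi$ for real $\chi$.

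The main obstacle is uniformly controlling the contributions of all remaining non-principal, non-exceptional characters as $q$ ranges up to $(q_0 Q)^D$. This requires a second application of Landau-Page at the enlarged level $(q_0 Q)^D$, combined with the standard Vinogradov-type zero-free region of width $\asymp 1/(\sqrt{\log x} + D^2 \log Q)$ for every non-exceptional primitive character. The uniqueness clause of Landau-Page must be used carefully, together with the bound $(1-\rho)^{-1} \ll q_0$, to verify that no further exceptional term requires isolation at the enlarged level. Summing the remaining zero contributions via the standard truncated explicit formula, as in Chapter 11 of \cite{MV}, then produces the error term of the shape stated in \eqref{lb}.
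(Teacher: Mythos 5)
The paper does not prove this lemma from first principles: it is quoted as a purpose-built special case of Proposition 4.7 of Ruzsa--Sanders \cite{Ruz}, applied to the pair $(Q^{D^2+D},Q^{D})$. Your outline is essentially a reconstruction of the proof of that proposition, and several of its ingredients are correct as stated: the orthogonality decomposition of $\psi(x,a,q)$, the truncated explicit formula, and the chain $L(1,\chi)\gg q_0^{-1/2}$ (class number formula), $1-\rho\gg L(1,\chi)/\log^2 q_0$ (mean value theorem plus $|L'(\sigma,\chi)|\ll\log^2 q_0$ near $s=1$), hence $(1-\rho)^{-1}\ll q_0$.

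The genuine gap is in how you calibrate the exceptional-zero dichotomy. You apply Landau--Page first at level $Q^D$ with the region $\Re s>1-c_0/(D\log Q)$, i.e.\ relative to the \emph{small} parameter. In the exceptional case the moduli $q$ then range up to $(q_0Q)^D$, which can be as large as $Q^{D^2+D}$, so you must control every character of conductor up to that enlarged level in a region of width $\asymp 1/(D^2\log Q)$. Your proposed fix --- a second application of Landau--Page at the enlarged level plus ``careful use of uniqueness'' --- does not close this. Uniqueness at level $(q_0Q)^D$ only says that at most one primitive character of conductor $\le (q_0Q)^D$ has a zero in $\sigma>1-c_1/\log((q_0Q)^D)$; nothing forces that character to be your $\chi$, whose zero is only known to lie in the \emph{wider} region $\sigma>1-c_0/(D\log Q)$. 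If the exceptional character at the enlarged level is instead some $\chi'$ of conductor $q'\in(Q^D,(q_0Q)^D]$, then $\chi'$ is induced to admissible moduli such as $q=\operatorname{lcm}(q_0,q')\le(q_0Q)^D$, and its zero $\beta'$ contributes a term of size $x^{\beta'}/\vphi(q)$ with $1-\beta'$ possibly arbitrarily small; this is neither isolated in \eqref{lb} nor absorbed by the error, and the bound $(1-\rho)^{-1}\ll q_0$ gives no information about $\chi'$. The correct setup --- the one behind the Ruzsa--Sanders ``exceptional pair'' $(Q^{D^2+D},Q^D)$ --- is to run Landau--Page once at the large level $X=Q^{D^2+D}$ from the outset: declare the situation exceptional only if the unique possible exceptional character in $\sigma>1-c/\log X$ has conductor $q_0\le Q^D$, and otherwise set $q_0=1$, so that the admissible moduli $q\le Q^D$ are then strictly smaller than the conductor of any exceptional character at level $X$ and hence never induce it. With that single recalibration your argument goes through.
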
  

\noindent Lemma \ref{RS} is a purpose-built special case of \cite[Proposition 4.7]{Ruz}, which in the language of that paper can be deduced by considering the pair $(Q^{D^2+D},Q^{D})$, where $q_0$ is the modulus of the exceptional Dirichlet character if the pair is exceptional and $q_0=1$ if the pair is unexceptional. \\

\noindent It is a calculus exercise to verify that if $\epsilon \in [0,1/2]$ and $x\geq 16$, then $1-x^{-\epsilon}/(1-\epsilon)\geq \epsilon,$ which implies that the main term in Lemma \ref{RS} satisfies 
\begin{equation}\label{x/q0}\Re\left(\frac{x}{\vphi(q)}-\frac{\chi(a)x^{\rho}}{\vphi(q)}\right) \geq (1-\rho) \frac{x}{\vphi (q)} \gg \frac{x}{q_0\vphi(q)}.
\end{equation}

\subsection{Fourier analysis and the circle method on $\Z$} We embed our finite sets in $\Z$, on which we utilize an unnormalized discrete Fourier transform. Specifically, for a function $F: \Z \to \C$ with finite support, we define $\widehat{F}: \T \to \C$, where $\T= \R/\Z$, by \begin{equation*} \widehat{F}(\alpha) = \sum_{x \in \Z} F(x)e(-x\alpha), \end{equation*} where $e(t)=e^{2\pi i t}$. Given $N\in \N$ and a set $A\subseteq [N]$ with $|A|=\delta N$, we examine the Fourier analytic behavior of $A$ by considering the \textit{balanced function}, $f_A$, defined by
$ f_A=1_A-\delta 1_{[N]}.$

\noindent As is standard in the circle method, we decompose the frequency space into two pieces: the points of $\T$ that are close to rational numbers with small denominator, and the complement.

\begin{definition}Given $\gamma>0$ and $Q\geq 1$, we define, for each $a,q\in \N$ with $0\leq a \leq q-1$,
$$\mathbf{M}_{a/q}(\gamma)=\left\{ \alpha \in \T : \Big|\alpha-\frac{a}{q}\Big| < \gamma \right\},  \ \mathbf{M}_q(\gamma)=\bigcup_{(a,q)=1} \mathbf{M}_{a/q}(\gamma), \text{ and }\mathbf{M}'_q(\gamma)=\bigcup_{r\mid q} \mathbf{M}_r(\gamma)=\bigcup_{a=0}^{q-1} \mathbf{M}_{a/q}(\gamma).$$  We then define the \textit{major arcs} by
$$ \mathfrak{M}(\gamma,Q)=\bigcup_{q=1}^{Q} \mathbf{M}_q(\gamma),$$ 
and the \textit{minor arcs} by  
$\mathfrak{m}(\gamma,Q)=\T\setminus \mathfrak{M}(\gamma,Q).
$ 
We note that if $2\gamma Q^2<1$, then \begin{equation} \label{majdisj}\mathbf{M}_{a/q}(\gamma)\cap\mathbf{M}_{b/r}(\gamma)=\emptyset \end{equation}whenever  $a/q\neq b/r$ and  $q,r \leq Q$. 
\end{definition}

\subsection{Inheritance proposition} As previously noted, we defined auxiliary polynomials to keep track of an inherited lack of prescribed differences at each step of a density increment iteration. For our restriction to prime inputs, we define \begin{equation*} \Lambda_d = \{\bsn\in \Z^{\l}: \bsr_d+d\bsn \in \P^{\l} \}.\end{equation*} The following proposition makes this inheritance precise.

\begin{proposition} \label{inh} If $h\in\Z[x_1,\dots,x_{\ell}]$ is $\P$-intersective, $d,q\in \N$, $A\subseteq \N$, and $A'\subseteq \{a: x+\lambda(q)a \in A\}$ for some $x\in \Z$, then $$\{\lambda(q)x: x\in (A'-A')\cap h_{qd}(\Lambda_{qd})\}\subseteq  (A-A)\cap h_d(\Lambda_d).$$

\noindent In particular, if $(A-A)\cap h_d(\Lambda_d)\subseteq \{0\}$, then $(A'-A')\cap h_{qd}(\Lambda_{qd})\subseteq \{0\}$.
\end{proposition}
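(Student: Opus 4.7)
The plan is to unpack the definitions on both sides and exhibit, for each element of $(A'-A')\cap h_{qd}(\Lambda_{qd})$, a corresponding element of $(A-A)\cap h_d(\Lambda_d)$ obtained by multiplication by $\lambda(q)$. The $A-A$ side is essentially immediate from the hypothesis that $A'$ is contained in the dilated pullback $\{a: x+\lambda(q)a \in A\}$: writing $y = a_1'-a_2'$ with $a_1',a_2'\in A'$ and multiplying by $\lambda(q)$ produces $\lambda(q)y = (x+\lambda(q)a_1')-(x+\lambda(q)a_2') \in A-A$.

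The core of the argument is to show $\lambda(q)y \in h_d(\Lambda_d)$ whenever $y\in h_{qd}(\Lambda_{qd})$. Writing $y = h(\bsr_{qd}+qd\bsn)/\lambda(qd)$ with $\bsr_{qd}+qd\bsn \in \P^{\l}$, the key observation is that the Chinese Remainder construction of the residues $\bsr_d$ and $\bsr_{qd}$ from a common system of $p$-adic roots $\{\bsz_p\}$ forces the compatibility $\bsr_{qd}\equiv \bsr_d \pmod{d}$. Hence we can write $\bsr_{qd}=\bsr_d+d\bsc$ for some $\bsc\in \Z^{\l}$, which gives
$$ \bsr_{qd}+qd\bsn = \bsr_d + d(\bsc+q\bsn). $$
Setting $\bsm = \bsc+q\bsn$ and using complete multiplicativity of $\lambda$, we get
$$ \lambda(q)y = \frac{h(\bsr_{qd}+qd\bsn)}{\lambda(d)} = \frac{h(\bsr_d+d\bsm)}{\lambda(d)} = h_d(\bsm), $$
while $\bsr_d+d\bsm = \bsr_{qd}+qd\bsn \in \P^{\l}$ certifies $\bsm \in \Lambda_d$. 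Combining with the $A-A$ side yields the asserted containment.

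The ``in particular'' statement is then just the contrapositive together with the fact that $\lambda(q)$ is a positive integer, so $\lambda(q)y = 0$ forces $y=0$.

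I do not expect any real obstacle here; the only subtlety is making sure the $p$-adic roots $\{\bsz_p\}$ used to define both $\bsr_d$ and $\bsr_{qd}$ are the same fixed system (so that the congruence $\bsr_{qd}\equiv \bsr_d \pmod d$ is genuine), and that $\lambda$ is completely multiplicative so $\lambda(qd)=\lambda(q)\lambda(d)$. Both are built into the definitions recalled in Section \ref{auxsec}. Note also that the $\P$-intersective hypothesis is used only to guarantee that the roots $\bsz_p$ with the required nonvanishing coordinates exist in the first place, so that the sets $\Lambda_d$ and $\Lambda_{qd}$ are defined from a consistent $p$-adic system.
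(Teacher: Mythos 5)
Your proposal is correct and follows essentially the same route as the paper: the congruence $\bsr_{qd}\equiv \bsr_d \pmod d$ from the common $p$-adic root system, the substitution $\bsr_{qd}+qd\bsn = \bsr_d+d(\bsc+q\bsn)$, and complete multiplicativity of $\lambda$ are exactly the steps in the paper's argument, with the prime membership of $\bsr_d+d\bsm$ handled identically. No gaps.
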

\begin{proof} Suppose that $A\subseteq \N$, $A'\subseteq \{a : x+\lambda(q)a \in A\}$, and $t\in(A'-A')\cap h_{qd}(\Lambda_{qd})$. In other words,  $$t = a-a'=h_{qd}(\bsn)=h(\bsr_{qd}+qd\bsn)/\lambda(qd)$$ for some $\bsn\in \Z^{\ell}$, $a, a' \in A'$, with $\bsr_{qd}+qd\bsn \in \P^{\l}$. Clearly $\lambda(q)t=(x+\lambda(q)a)-(x+\lambda(q)a')\in A-A$, but we must also show $\lambda(q)t\in h_d(\Lambda_d)$. By construction, we have that $\bsr_{qd}\equiv \bsr_d$ mod $d$, so there exists $\boldsymbol{s}\in \Z^{\ell}$ such that $\bsr_{qd}=\bsr_d+d\boldsymbol{s}$. Further, $\lambda$ is completely multiplicative, and therefore $$\lambda(q)t = \lambda(q)h_{qd}(\bsn) = h(\bsr_d+d(\boldsymbol{s}+q\bsn))/\lambda(d) = h_d(\boldsymbol{s}+q\bsn),$$ and $\bsr_d+d(\boldsymbol{s}+q\bsn) = \bsr_{qd}+qd\bsn \in \P^{\l}$, so $\lambda(q)t\in h_d(\Lambda_d)$, completing the proof.
\end{proof} 

\subsection{Input restriction} \label{sievesec} 
 
In addition to restricting our inputs to (affine preimages of) the primes, we also sieve away roots of the gradient vector, as is done in \cite{ricemax} and \cite{DR}, in order to apply Hensel's lemma when analyzing local exponential sums. For a $\P$-intersective polynomial $h\in\Z[x_1,\dots,x_{\ell}]$, and each prime $p$ and $d\in \N$, we define $\vepsilon_d(p)$ to be $0$ if $p\mid d$ and $1$ otherwise. We define $\gamma_{d}(p)$ to be the smallest power such that $\grad h_d$ does not vanish modulo $p^{\gamma_d(p)}$ on \begin{equation*} J_d(p)=\{\bsc \in (\Z / p^{\gamma_{d}(p)}\Z)^{\l} : (r_d)_i+dc_i \not\equiv 0 \ (\text{mod }p) \text{ for all }1\leq i \leq \l\}. \end{equation*} We let $j_d(p)$ denote the number of solutions to $\grad h_d\equiv \bszero \ (\text{mod }p^{\gamma_d(p)})$ on $J_d(p)$, and we let $$w_d=\prod_{p\leq Y} \left(1-\frac{j_d(p)}{|J_d(p)|}\right)=\prod_{p\leq Y} \left(1-\frac{j_d(p)}{\left((p-\vepsilon_d(p))p^{\gamma_d(p)-1}\right)^{\l}}\right)>0.$$  Then, for $d\in \N$ and $Y>0$ we define $$W_d(Y)=\left\{\bsn\in \Lambda_d: \grad h_d(\bsn) \not\equiv \bszero \text{ mod } p^{\gamma_{d}(p)} \text{ for all } p\leq Y \right\}.$$

\noindent To appropriately weight our doubly-restricted inputs, we define \begin{equation*}\nu_d(\bsn) = w_d^{-1}\cdot \left( \frac{\vphi(d)}{d}\right)^{\l} \cdot \prod_{i=1}^{\l}\log((r_d)_i+dn_i) \cdot 1_{W_d(Y)}(\bsn) \cdot \begin{cases} h_d'(n) & \l=1 \\ 1 & \l\geq 2 \end{cases}. \end{equation*}

\noindent Further, for $x_1,\dots,x_{\l}>0$ and a collection of primes $p_1<\cdots<p_{s}$, we define 
\begin{equation*}\mathcal{A}_{p_1\cdots p_s}=\mathcal{A}_{p_1\cdots p_s}(x_1,\dots,x_{\l}) = \left\{\bsn \in B: \bsr_d+d\bsn \in \P^{\l}, \ \grad h_d (\bsn) \equiv \bszero\text{ mod } p_i^{\gamma(p_i)} \ \text{for all } 1\leq i \leq s  \right\},
\end{equation*}
where $B=[x_1]\times\cdots\times [x_{\l}]$. Finally, we let $b=b(h)=\max_{p\in \P} \gamma_d(p) \ll_h 1$ (see Section \ref{cdgv}).
  

\noindent For the three lemmas that follow, we let $Q,D,q_0,\rho,$ and $\chi$ be as in Lemma \ref{RS}.

\begin{lemma} \label{sieve2} If $\l \geq 2$, $x_1,\dots,x_{\l},Y>0$, $q_0\mid d$, $dY^{bt}\leq (q_0Q)^D$, and $t>2C\log\log Y$ for a constant $C=C(h)$, then \begin{equation*} \sum_{\bsn \in B} \nu_d(\bsn) =\prod_{i=1}^{\l} \left(x_i-\frac{\chi((r_d)_i)x_i^{\rho}}{\rho d^{1-\rho}} \right) + E_1 + E_2,
\end{equation*}
where $X=\max\{x_1,\dots,x_{\l}\}$, $E_1= O_h\left((dX)^{\l}\Big(\frac{C\log\log Y}{t}\Big)^t \right),$ and $$E_2=O_h\left((dX)^{\l}\log^{\l}(dX)Y^{4bt}\exp\left(-\frac{c\log dx}{\sqrt{\log dx}+D^2\log Q}\right)D^{2\l}\log^{\l} Q\right), $$ for a constant $c>0$.
\end{lemma}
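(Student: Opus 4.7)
The plan is to expand the indicator $1_{W_d(Y)}(\bsn)$ via a truncated Bonferroni sieve at depth $t$, reduce each retained term to a product of one-dimensional primes-in-AP sums via the Chinese Remainder Theorem, and apply Lemma \ref{RS} uniformly across those sums. Starting from the definition,
$$\sum_{\bsn \in B} \nu_d(\bsn) = \frac{1}{w_d}\left(\frac{\vphi(d)}{d}\right)^{\l} \sum_{\substack{\bsn \in B \cap W_d(Y) \\ \bsr_d + d\bsn \in \P^{\l}}} \prod_{i=1}^{\l} \log((r_d)_i + dn_i),$$
I would rewrite $1_{W_d(Y)}(\bsn) = \sum_{S \subseteq \{p \leq Y\}} (-1)^{|S|} \prod_{p \in S} 1_{\grad h_d(\bsn) \equiv \bszero \ \text{mod}\ p^{\gamma_d(p)}}$ and truncate at $|S| \leq t$. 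Bonferroni's inequalities bound the truncation error by the next layer, which, using a Deligne-type estimate on $j_d(p)/|J_d(p)|$ and the trivial bound $(dX)^{\l}$ on the integrand, yields exactly $E_1$ after multiplication by the prefactor; the hypothesis $t > 2C \log\log Y$ is what makes this a genuine truncation remainder rather than a divergence.

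For each retained $S$, set $m_S := \prod_{p \in S} p^{\gamma_d(p)} \leq Y^{bt}$. By CRT the joint congruence condition on $\bsn$ decomposes into residue classes $\bsc + m_S \Z^{\l}$ with $\bsc$ ranging over a set of $\prod_{p \in S} j_d(p)$ representatives inside $\prod_{p \in S} J_d(p)$. For each such $\bsc$ the sum factors across coordinates, and each coordinate sum becomes $\psi((r_d)_i + dx_i,\, (r_d)_i + dc_i,\, dm_S)$, up to negligible boundary contributions. Since $q_0 \mid d \mid dm_S$ and $dm_S \leq dY^{bt} \leq (q_0Q)^D$, Lemma \ref{RS} applies; moreover $\chi$ has modulus $q_0 \mid d$, so $\chi((r_d)_i + dc_i) = \chi((r_d)_i)$ uniformly in $\bsc$. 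Using $\vphi(dm_S) = \vphi(d) \prod_{p \in S} \vphi(p^{\gamma_d(p)})$ when $(d,m_S)=1$ (with the finitely many $p \leq Y$ dividing $d$ absorbed into an $O_h(1)$ factor) and $(r_d)_i + dx_i = dx_i + O(d)$, the $S$-th contribution, after multiplication by the prefactor $w_d^{-1}(\vphi(d)/d)^{\l}$, equals
$$\frac{(-1)^{|S|}}{w_d} \prod_{p \in S} \frac{j_d(p)}{|J_d(p)|} \prod_{i=1}^{\l}\left(x_i - \frac{\chi((r_d)_i)x_i^{\rho}}{\rho d^{1-\rho}}\right)$$
up to errors from Lemma \ref{RS}; crucially, $|J_d(p)| = \vphi(p^{\gamma_d(p)})^{\l}$ for $p \nmid d$, so the denominators emerging naturally from Lemma \ref{RS} already line up with the Euler factor in $w_d$.

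Summing over $|S| \leq t$ reproduces the Bonferroni truncation of $\prod_{p \leq Y}(1 - j_d(p)/|J_d(p)|) = w_d$, so the $w_d^{-1}$ factor cancels to yield the stated main term up to a further sieve-truncation contribution absorbable into $E_1$. The accumulated Lemma \ref{RS} error, multiplied by the $\ll Y^{O(bt)}$ many triples $(S,\bsc,i)$ and the weight $(\log dX)^{\l}$, produces $E_2$. The main technical obstacle is the combinatorial bookkeeping in this resummation step: one must verify that after applying Lemma \ref{RS} term-by-term the signed sum really is the Bonferroni truncation of the Euler product $w_d$, which relies on the three structural matches noted above (denominators $\vphi(p^{\gamma_d(p)})^{\l} = |J_d(p)|$, independence of $\chi((r_d)_i + dc_i)$ on $\bsc$ via $q_0 \mid d$, and clean CRT factorization for $p \nmid d$). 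Balancing the rapidly decaying $E_1$ against the polynomially growing $E_2$ is then forced by the assumption $dY^{bt} \leq (q_0Q)^D$, which is exactly the range in which Lemma \ref{RS} remains informative.
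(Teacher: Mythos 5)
Your proposal is correct and follows essentially the same route as the paper's proof: a Bonferroni/inclusion--exclusion expansion of $1_{W_d(Y)}$ truncated at depth $t$, a CRT decomposition into residue classes modulo $\prod_{p}p^{\gamma_d(p)}$ that factors coordinatewise into $\psi$-sums handled by Lemma \ref{RS}, resummation of the signed main terms into the Euler product $w_d$ (cancelling the $w_d^{-1}$ prefactor), with $E_1$ coming from the sieve tail and $E_2$ from the accumulated Lemma \ref{RS} errors counted against the $\binom{z}{s}$-many retained terms. The structural observations you single out (the match $|J_d(p)|=\vphi(p^{\gamma_d(p)})^{\l}$ for $p\nmid d$, and $\chi((r_d)_i+dc_i)=\chi((r_d)_i)$ via $q_0\mid d$) are exactly the points the paper relies on implicitly.
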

 
\begin{proof} Suppose $\l\geq 2$, and fix $x_1,\dots,x_{\l},Y>0$ and $d\in \N$ with $q_0 \mid d$. Letting $z$ denote the number of primes that are at most $Y$, we have by the Chinese Remainder Theorem and the inclusion-exclusion principle that \begin{equation*}\label{brunalt2} \sum_{\bsn \in B} \nu_d(\bsn)= w_d^{-1} \left(\frac{\vphi(d)}{d}\right)^{\l}\sum_{s=0}^z  (-1)^s \sum_{p_1<\dots < p_s\leq Y} \sum_{\mathcal{A}_{p_1\cdots p_s}} \prod_{i=1}^{\l} \log((r_d)_i+dn_i), \end{equation*}
and moreover the true sum lies between any two consecutive truncated alternating sums in $s$. 

\noindent Note that for an $\l$-tuple of congruence classes $\bsc$ modulo $m=p_1^{\gamma_d(p_1)}\cdots p_s^{\gamma_d(p_s)}$ to contribute more than a single element to $\mathcal{A}_{p_1\cdots p_s}(x)$, we must have $p_i \nmid (r_d)_j+dc_j$ for all $1\leq i \leq s$ and all $1\leq j \leq \l$. We enumerate these congruence classes $\bsc_i$ for $1\leq i \leq j_d(p_1)\cdots j_d(p_s),$ and letting $b=\max_{p\in\P} \gamma_d(p)$, we know by Lemma \ref{wprimes} that if $dY^{bt}\leq (q_0Q)^D$, then
\begin{align*} &w_d^{-1}\left(\frac{\vphi(d)}{d}\right)^{\l}\sum_{s=0}^{t}  (-1)^s \sum_{p_1<\dots < p_s \leq Y} \left( \sum_{i=1}^{j_d(p_1)\cdots j_d(p_s)} \sum_{\substack{\bsn\in B\cap \Lambda_d \\ \bsn\equiv \bsc_i  (\text{mod }m)}} \prod_{i=1}^{\l}\log((r_d)_i+dn_i) + O_h(Y^{bt}\log^{\l}(dX)) \right) \\ = & w_d^{-1}\left(\frac{\vphi(d)}{d}\right)^{\l}\sum_{s=0}^{t}  (-1)^s \sum_{p_1<\dots < p_s \leq Y} \left( \sum_{i=1}^{j_d(p_1)\cdots j_d(p_s)} \prod_{i=1}^{\l}\psi((r_d)_i+dx_i,(r_d)_i+dc_i,dm) + O_h(Y^{bt}\log^{\l}(dX)) \right) \\ = & w_d^{-1}\left(\frac{\vphi(d)}{d}\right)^{\l}\sum_{s=0}^{t}  (-1)^s \sum_{p_1<\dots < p_s \leq Y} \left( \frac{j_d(p_1)\cdots j_d(p_s)}{(\vphi(dm))^{\l}} \prod_{i=1}^{\l} \left(dx_i-\chi((r_d)_i)(dx_i)^{\rho}/\rho\right)  + C^sE_0\right)\\ = & w_d^{-1}\sum_{s=0}^{t}  (-1)^s \sum_{p_1<\dots < p_s \leq Y} \left( \prod_{i=1}^{\l} \frac{j_d(p_i)}{\left((p-\epsilon_d(p_i))p^{\gamma_d(p_i)-1}\right)^{\l}}\left(x_i-\frac{\chi((r_d)_i)x_i^{\rho}}{\rho d^{1-\rho}} \right)  + C^sE_0 \right),  \end{align*}
where the $C^s$ term comes from the fact that $j_d(p)\ll_h 1$ (see Section \ref{cdgv}), and \begin{equation*}E_0=O_h\left(Y^{bt}\log^{\l}(dX)+(dX)^{\l}\exp\left(-\frac{c\log dx}{\sqrt{\log dx}+D^2\log Q}\right)D^{2\l}\log^{\l} Q\right), \end{equation*}
which can be merged to \begin{equation} \label{E0b1} E_0=O_h\left((dX)^{\l}\log^{\l}(dX)Y^{bt}\exp\left(-\frac{c\log dx}{\sqrt{\log dx}+D^2\log Q}\right)D^{2\l}\log^{\l} Q\right). \end{equation}
Adding and subtracting the main term for $s>t$, and noting that $$\sum_{s=0}^{z}  (-1)^s \sum_{p_1<\dots < p_s \leq Y} \prod_{i=1}^s \frac{j_d(p_i)}{\left((p-\epsilon_d(p_i))p^{\gamma_d(p_i)-1}\right)^{\l}}=w_d, $$  we have 
\begin{equation} \label{E1E21} \sum_{\bsn\in B} \nu_d(\bsn) =  \prod_{i=1}^{\l} \left(x_i-\frac{\chi((r_d)_i)x_i^{\rho}}{\rho d^{1-\rho}} \right)+E_1+E_2,  
\end{equation} 
where $E_1=O\left((dX)^{\l}\sum_{s=t+1}^{z}\sum_{p_1<\dots < p_s \leq Y}  \prod_{i=1}^s \frac{j_d(p_i)}{\left((p-\epsilon_d(p_i))p^{\gamma_d(p_i)-1}\right)^{\l}} \right)$ and $E_2 = \sum_{s=0}^t {z\choose s}C^sE_0$, and we note that $w^{-1}=O_h(1)$ since $\l\geq 2$. First, we see 
\begin{align*} E_1&=O_h\left((dX)^{\l}\sum_{s=t+1}^{z}\sum_{p_1<\dots < p_s \leq Y}  \prod_{i=1}^s \frac{C}{p} \right)\\& =O_h\left((dX)^{\l}\sum_{s=t+1}^{z}\frac{1}{s!}\Big(\sum_{p\leq Y}\frac{C}{p}\Big)^s \right) \\ & = O_h\left((dX)^{\l}\sum_{s=t+1}^{z}\frac{(C\log\log Y)^s}{s!} \right).
\end{align*}
If $t>2C\log\log Y$, then the sum is dominated by twice its first term, hence \begin{equation}\label{E1b1} E_1=O_h\left((dX)^{\l}\frac{(C\log\log Y)^t}{t!} \right) = O_h\left((dX)^{\l}\Big(\frac{C\log\log Y}{t}\Big)^t \right), \end{equation} where the last step uses that $t!\geq (t/e)^t$. 

\noindent Further,
\begin{equation*}|E_2| = \left|\sum_{s=0}^t {z\choose s}C^sE_0\right| \leq |E_0|\sum_{s=0}^t \frac{(Cz)^s}{s!}.
\end{equation*} 
This time the sum is dominated by twice its \textit{last} term, so \begin{equation} \label{E2b1} E_2 = O(E_0(Cz)^t/t!)=O(E_0(Cz/t)^t). \end{equation} Since $z\leq Y$, the result follows from \eqref{E1E21}, \eqref{E0b1}, \eqref{E1b1}, and \eqref{E2b1}.
 \end{proof} 

\noindent In particular, by the same reasoning as \eqref{x/q0}, the main term in Lemma \ref{sieve2} satisfies \begin{equation} \Re\left( \prod_{i=1}^{\l} \left(x_i-\frac{\chi((r_d)_i)x_i^{\rho}}{\rho d^{1-\rho}} \right)\right) \gg x_1\cdots x_{\l}/q_0^{\l}. \end{equation}

\noindent To account for the derivative weight when $\l=1$, we need the following estimate, which follows from Lemma \ref{RS} and partial summation.

\begin{lemma} \label{wprimes} If $x>0$, $q_0\mid q$, $(a,q)=1$, and $q\leq (q_0Q)^D$, then \begin{align*}\Psi_h(x,a,q) = \sum_{\substack{p\leq x \textnormal{ prime} \\ p\equiv a (\textnormal{mod }q)}} h'(p)\log p  = & \vphi(q)^{-1}\int_1^x h'(t)\left(1-\chi(a)t^{\rho-1}\right) dt\\ & + O_h\left(x^k\exp\left(-\frac{c\log x}{\sqrt{\log x}+D^2\log Q}\right)D^2\log Q\right). \end{align*} 
\end{lemma}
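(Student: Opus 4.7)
The plan is to reduce Lemma \ref{wprimes} to Lemma \ref{RS} via partial summation (Riemann--Stieltjes integration by parts), treating $\psi(\cdot,a,q)$ as the underlying counting measure.

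First I would write
\begin{equation*}
\Psi_h(x,a,q) = \int_{1^-}^{x} h'(t)\, d\psi(t,a,q) = h'(x)\psi(x,a,q) - \int_{1}^{x} h''(t)\,\psi(t,a,q)\, dt,
\end{equation*}
using that $\psi(1,a,q)=0$ (no primes below $2$). Next, substitute the expansion from Lemma \ref{RS}, namely $\psi(t,a,q)=M(t)+R(t)$, where
$M(t) = \vphi(q)^{-1}\bigl(t - \chi(a)t^{\rho}/\rho\bigr)$
is the main term and $R(t)$ is the Siegel--Walfisz--type error, bounded by $t\exp\bigl(-c\log t /(\sqrt{\log t}+D^{2}\log Q)\bigr)D^{2}\log Q$.

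For the main term, observe that $M'(t)=\vphi(q)^{-1}(1-\chi(a)t^{\rho-1})$, so integrating by parts in reverse gives
\begin{equation*}
h'(x)M(x) - \int_{1}^{x} h''(t) M(t)\, dt = h'(1) M(1) + \int_{1}^{x} h'(t) M'(t)\, dt = h'(1)M(1) + \vphi(q)^{-1}\int_{1}^{x} h'(t)\bigl(1-\chi(a)t^{\rho-1}\bigr) dt.
\end{equation*}
The boundary contribution $h'(1)M(1)$ is $O_h(1)$, which is absorbed into the stated error.

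For the error contributions, I would use the crude bounds $h'(t)\ll_h t^{k-1}$ and $h''(t)\ll_h t^{k-2}$. The boundary term satisfies $h'(x)R(x)\ll_h x^{k}\exp\bigl(-c\log x/(\sqrt{\log x}+D^{2}\log Q)\bigr)D^{2}\log Q$, exactly matching the target bound. For the integral $\int_{1}^{x} h''(t) R(t)\, dt$, the integrand is $O_h\bigl(t^{k-1}\exp(-c\log t/(\sqrt{\log t}+D^{2}\log Q))D^{2}\log Q\bigr)$, which is eventually monotone increasing in $t$ (the sub-exponential decay cannot overcome the polynomial factor for large $t$), so the integral is bounded by $x$ times its integrand at $t=x$, yielding the same order. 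Combining the main and error contributions completes the proof.

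The only mild subtlety is confirming that the contribution from small $t$ (where $t^{k-1}$ does not yet dominate) is harmless; this is handled trivially since that range contributes $O_h(1)$, again absorbed into the stated error. No step here is a genuine obstacle — the whole argument is a routine Abel summation on top of Lemma \ref{RS}.
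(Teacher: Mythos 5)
Your proposal is correct and is exactly the argument the paper intends: the paper offers no written proof beyond the remark that the lemma "follows from Lemma \ref{RS} and partial summation," and your Riemann--Stieltjes integration by parts against $\psi(\cdot,a,q)$, with the main term recovered by reversing the integration by parts and the error handled via $h'(t),h''(t)\ll_h t^{k-1}$ together with the (essentially) monotone behavior of the integrand, fills in those details correctly (up to a harmless change in the constant $c$).
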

 
\noindent Incorporating Lemma \ref{wprimes}, we have the following analog of Lemma \ref{sieve2} when $\l=1$.

\begin{lemma} \label{sieve1} If $\l=1$, $x,Y>0$, $q_0\mid d$, $dY^{bt}\leq (q_0Q)^D$, and $t>2C\log\log Y$ for a constant $C=C(h)$, then \begin{equation*}\sum_{n\leq x} \nu_d(n) = \int_1^x h_d'(t)\left(1-\chi(r_d)(dt)^{\rho-1}\right)dt+E_1+E_2, \end{equation*} where $$E_1= O_h\left(h_d(x)(\log Y)^k\Big(\frac{C\log\log Y}{t}\Big)^t \right),$$ and $$E_2=O_h\left((dx)^k\log(dx)Y^{4bt}\exp\left(-\frac{c\log dx}{\sqrt{\log dx}+D^2\log Q}\right)D^2\log Q\right), $$ for a constant $c>0$.
\end{lemma}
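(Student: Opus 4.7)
The plan is to mirror the proof of Lemma \ref{sieve2} in the single-variable setting, substituting Lemma \ref{wprimes} for Lemma \ref{RS} throughout in order to absorb the derivative weight $h_d'(n)$. Expanding the sieve and truncating the Brun alternating sum at level $t$ yields
$$\sum_{n\leq x}\nu_d(n)=w_d^{-1}\frac{\vphi(d)}{d}\sum_{s=0}^{t}(-1)^s\sum_{p_1<\cdots<p_s\leq Y}\sum_{n\in \mathcal{A}_{p_1\cdots p_s}(x)}h_d'(n)\log(r_d+dn)+\text{(tail)},$$
where the tail at levels $s>t$ will feed into $E_1$. For each tuple with $m=p_1^{\gamma_d(p_1)}\cdots p_s^{\gamma_d(p_s)}$, I would stratify the inner sum by admissible residue classes $c_i$ modulo $m$ lying in $J_d(p_1)\times\cdots\times J_d(p_s)$, on which $\grad h_d\equiv \bszero$.

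For a fixed admissible $c_i$, substitute $p=r_d+dn$ and use the identity $h_d'(n)=(d/\lambda(d))h'(r_d+dn)$ to rewrite the inner sum as $(d/\lambda(d))\Psi_h(r_d+dx,\,r_d+dc_i,\,dm)$. The constraints $c_i\in J_d(p_i)$ together with the $\P$-intersectivity of $\bsz_p$ guarantee $(r_d+dc_i,dm)=1$, and since $q_0\mid d$ we have $\chi(r_d+dc_i)=\chi(r_d)$. The hypothesis $dm\leq dY^{bt}\leq (q_0Q)^D$ makes Lemma \ref{wprimes} applicable, producing the main term $(d/(\lambda(d)\vphi(dm)))\int_1^{r_d+dx}h'(u)(1-\chi(r_d)u^{\rho-1})\,du$ plus the Lemma \ref{wprimes} error. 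The substitution $u=r_d+dt$ converts this to a multiple of $\int h_d'(t)(1-\chi(r_d)(r_d+dt)^{\rho-1})\,dt$; replacing $(r_d+dt)^{\rho-1}$ by $(dt)^{\rho-1}$ costs $O(t^{\rho-2})$ pointwise (valid since $|r_d|<d$), and the endpoint shift contributes an $O_h(1)$ additive discrepancy, both absorbable into $E_2$.

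Assembling the main terms across $0\leq s\leq t$ and using the identity
$$\sum_{s=0}^{z}(-1)^s\sum_{p_1<\cdots<p_s\leq Y}\prod_{i=1}^{s}\frac{j_d(p_i)}{(p_i-\vepsilon_d(p_i))p_i^{\gamma_d(p_i)-1}}=w_d$$
cancels the $w_d^{-1}$ prefactor and produces the stated integral as the main term, with the discarded tail $s>t$ supplying $E_1$. I would bound this tail exactly as in Lemma \ref{sieve2} using the Mertens estimate $\sum_{p\leq Y}1/p\ll \log\log Y$ and Stirling's inequality $s!\geq(s/e)^s$, except that the multivariate envelope $(dX)^{\l}$ is replaced by the single-variable bound $\sum_{n\leq x}h_d'(n)\log(r_d+dn)\ll h_d(x)(\log Y)^k$, accounting for the asymmetric shape of the stated $E_1$. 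The $E_2$ error accumulates the per-term Lemma \ref{wprimes} errors across $\binom{z}{s}$ sieve configurations per level and is dominated by the $s=t$ term via $z\leq Y$, in direct analogy with \eqref{E2b1}.

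The main bookkeeping step, rather than a genuine obstacle, is reconciling the shifted integral delivered by Lemma \ref{wprimes} with the cleaner integral $\int_1^x h_d'(t)(1-\chi(r_d)(dt)^{\rho-1})\,dt$ appearing in the statement; both discrepancies are lower-order and absorbable into $E_2$. All remaining combinatorial and analytic steps are directly parallel to those in the proof of Lemma \ref{sieve2}.
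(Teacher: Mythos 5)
Your proposal is correct and follows exactly the route the paper intends: the paper offers no separate proof of Lemma \ref{sieve1}, asserting only that it is the analog of Lemma \ref{sieve2} obtained by "incorporating Lemma \ref{wprimes}," and your argument carries out precisely that substitution, including the key identities $h_d'(n)=(d/\lambda(d))h'(r_d+dn)$ and $\chi(r_d+dc)=\chi(r_d)$ for $q_0\mid d$, and the absorption of the change-of-variable discrepancies into $E_2$. No issues.
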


\section{Proof of Theorem \ref{mainPint}}

\subsection{Main iteration lemma and proof of Theorem \ref{mainPint}}

For the remainder of the section, we fix a $\P$-Deligne polynomimal $h\in \Z[x_1,\dots,x_{\ell}]$ of degree $k\geq 2$,  and positive constants $C_0=C_0(h)$ and $c_0=c_0(h)$ that are appropriately large and small, respectively. We also fix $N\in \N$, and we let $$\cQ=\cQ(N,h)=\exp(c_0\sqrt{\log N}), \quad \cQ'=\cQ'(N,h)=\exp(c_0(\log N)^{1/4}). $$ We apply Lemma \ref{RS} with $Q=\cQ$ and $D=C_0$, letting $q_0\leq \cQ^{C_0}$, $\rho\in [1/2,1)$ and the Dirichlet character $\chi$ be as in the conclusion of the that lemma. For a density $\delta\in (0,1]$, we define $$\theta(h,\delta) = \begin{cases} \exp\left(-C_0\frac{\log(2\delta^{-1})}{\log\log(3\delta^{-1})} \right) & \l = 1 \\ \log^{-[(k-1)^2+1]}(2\delta^{-1}) & \l=2 \\ 1 & \l\geq 3 \end{cases}.$$

We deduce Theorem \ref{mainPint} from the following iteration lemma, encapsulating the density increment strategy.

\begin{lemma} \label{mainit} Suppose $A\subseteq [L]$ with $|A|=\delta L$ and $L\geq \sqrt{N}$. If $(A-A)\cap h_d(\Lambda_d)\subseteq \{0\}$,  $C_0,\delta^{-1}\leq \cQ'$, $q_0 \mid d$, and $d/q_0\leq \cQ$,  then there exists $q\ll_{h} \delta^{-2}$ and $A'\subseteq [L']$ such that 
$L'\gg_{h}  \delta^{4k}L$, 
\begin{equation*}|A'|\geq (1+c\theta(h,\delta))\delta L'   , \end{equation*} where $c=c(h)>0$, and $$(A'-A')\cap h_{qd}(\Lambda_{qd})\subseteq \{0\}. $$

\end{lemma}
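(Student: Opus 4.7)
The plan is to follow the standard Fourier-analytic density increment strategy, adapted to prime inputs via the sieved weight $\nu_d$ of Section \ref{sievesec}. First I would set up a weighted difference count. Taking a box $B = [X]^{\ell}$ of side $X$ calibrated so that $h_d(B)\subseteq[-L,L]$, consider
\[
S = \sum_{a \in A} \sum_{\bsn \in B} \nu_d(\bsn)\, 1_A\bigl(a - h_d(\bsn)\bigr).
\]
Since $W_d(Y) \subseteq \Lambda_d$, the hypothesis $(A-A)\cap h_d(\Lambda_d)\subseteq\{0\}$ kills every summand with $h_d(\bsn)\neq 0$; the surviving contributions, supported on the smooth hypersurface $\{h^k=0\}$, give $S = O_h(\delta L X^{\ell-1}(\log X)^\ell)$. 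Next I would open $S$ via Fourier inversion: with $f_A = 1_A - \delta 1_{[L]}$ and $T(\alpha) = \sum_{\bsn\in B} \nu_d(\bsn) e(h_d(\bsn)\alpha)$, $S$ decomposes into integrals against $\delta^2 |\widehat{1_{[L]}}|^2 T$, a cross term in $\widehat{f_A}$, and $|\widehat{f_A}|^2 T$. Choosing major arcs $\mathfrak{M}(\gamma, Q_1)$ with $Q_1 \asymp \delta^{-2}$ and $\gamma$ small, I would use Lemma \ref{sieve2} or \ref{sieve1}, together with a standard major-arc approximation of $T$ near each $a/q$, to evaluate the main-term integral as at least $\gg_h \delta^2 L X^\ell / q_0^\ell$ via \eqref{x/q0}.

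The hard part is the minor-arc bound on $T(\alpha)$ for $\alpha \in \mathfrak{m}(\gamma, Q_1)$. Because $\nu_d$ is prime-supported, I would apply Vaughan's identity to reduce $T$ to type-I and type-II bilinear sums in $h_d$, and then run Weyl differencing combined with the Deligne/Weil bound for the reduction of $h_d$ modulo each prime, valid for all but finitely many primes thanks to the $\P$-Deligne hypothesis, to obtain a power saving. The Vaughan decomposition incurs roughly a factor-of-two loss in the savings exponent compared to the integer-input case, which is precisely what forces $\mu'(k,\ell)$ in place of $\mu(k,\ell)$ and dictates the choice $\cQ' = \exp(c_0(\log N)^{1/4})$ versus $\cQ = \exp(c_0\sqrt{\log N})$.

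Finally, comparing the two evaluations of $S$ forces $|\widehat{f_A}(a/q)| \gg \theta(h,\delta)\,\delta L$ at some rational with $q \leq Q_1 \ll \delta^{-2}$, since otherwise the minor-arc and cross-term contributions would be dominated by the main term, contradicting $S = O(\delta L X^{\ell-1}\log^\ell X)$. The standard passage from a large Fourier coefficient at $a/q$ to a density increment on a subprogression, partitioning $[L]$ into progressions of common difference $\lambda(q)$ (a multiple of $q$ by the structure of $\lambda$) while also enforcing divisibility by $q_0$ to preserve the hypotheses of the next iterate, produces $A' \subseteq [L']$ with $L' \gg_h \delta^{4k} L$ and $|A'| \geq (1 + c\theta(h,\delta))\delta L'$. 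Proposition \ref{inh} then guarantees $(A'-A')\cap h_{qd}(\Lambda_{qd})\subseteq\{0\}$, completing the iteration step.
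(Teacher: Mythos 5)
Your overall architecture (weighted difference count against $\nu_d$, circle method, inheritance via Proposition \ref{inh}) matches the paper's, which routes the argument through an $L^2$ concentration estimate (Lemma \ref{L2I}) plus the standard increment Lemma \ref{dinc}. But there is a genuine gap at the crux of the lemma: the derivation of the specific gain $\theta(h,\delta)$. Your final step pigeonholes a single large Fourier coefficient, $|\widehat{f_A}(a/q)|\gg\theta(h,\delta)\delta L$ for some $q\leq Q_1\asymp\delta^{-2}$, "since otherwise the minor-arc and cross-term contributions would dominate." A naive pigeonhole over the $\asymp\delta^{-4}$ major arcs loses a polynomial factor in $\delta$, giving $\theta\approx\delta^{O(1)}$ rather than the near-constant $\theta(h,\delta)$ of the statement; that would collapse the whole iteration to a much weaker bound. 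To get $\theta(h,\delta)$ one needs the refined major-arc information and the machinery built on it: for $\l\geq3$ the convergent weight $\sum_q C^{\omega(q)}q^{-3/2}$ in \eqref{majmass3}; for $\l=2$ the bound $|S(\alpha)|\ll(q/\vphi(q))^C((k-1)^2+2)^{\omega(q)}T/q$, the divisor-sum estimate from \cite{HallTen}, and the Ruzsa--Sanders averaging trick (Proposition \ref{rstrick}); and for $\l=1$ the entire Bloom--Maynard additive-energy argument (Lemmas \ref{BME} and \ref{changlem}, dyadic pigeonholing, and the restricted set $R$), without which the gain $\exp(-C\log(2\delta^{-1})/\log\log(3\delta^{-1}))$ is unobtainable. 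None of this appears in your sketch.

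Two further points. First, you place the Deligne/Weil input on the minor arcs; in the paper it enters only through the complete local sums $\mathcal{G}(a,q)$ on the major arcs (Lemma \ref{locgen}, after Hensel reduction to prime moduli), while the minor-arc saving comes from a single-variable Weyl-type inequality for primes (Lemma \ref{weyl3}, imported from \cite{Rice}/\cite{lipan}) applied in one coordinate only. Relatedly, the passage from $\mu(k,\l)$ to $\mu'(k,\l)$ is forced by the restricted range of moduli in the Ruzsa--Sanders prime-counting input (Lemma \ref{RS}), not by a loss in a Vaughan decomposition. Second, the concentration step requires the technical hypothesis $|A\cap(L/9,8L/9)|\geq3\delta L/4$ (with an immediate increment on a subinterval otherwise), which your argument omits; this is minor but needed to make the inequality $\sum f_A(x)f_A(x+h_d(\bsn))\nu_d(\bsn)\leq-\delta^2LT/2$ go through.
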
 

\begin{proof}[Proof of Theorem \ref{mainPint}] Throughout this proof, we let $C$ and $c$ denote sufficiently large or small positive constants, respectively, which we allow to change from line to line, but which depend only on  $h$.  

\noindent Suppose $A \subseteq [N]$ with $|A|=\delta N$ and $$(A-A)\cap h(\P^{\ell})\subseteq \{0\}.$$ Partitioning $[N]$ into arithmetic progressions of step size $\lambda(q_0)$ and length between $N/2\lambda(q_0)$ and $N/\lambda(q_0)$, the pigeonhole principle guarantees the existence of an arithmetic progression $P=\{x+a\lambda(q_0): 1\leq \ell \leq N_0\}$ such that $N/2\lambda(q_0) \leq N_0 \leq N/\lambda(q_0)$ and $|A\cap P|/N_0 = \delta_0 \geq \delta$. This allows us to define $A_0 \subseteq [N_0]$ by $A_0=\{a \in [N_0]: x+a\lambda(q_0) \in A \}$. Setting $d_0=q_0$, Lemma \ref{mainit} yields, for each $m$, a set $A_m \subseteq [N_m]$ with $|A_m|=\delta_mN_m$ and $(A_m-A_m)\cap h_{d_m}(\Lambda_{d_m})\subseteq \{0\}.$ Further, we have that
\begin{equation} \label{NmI} N_m \geq c\delta^{4k}N_{m-1} \geq (c\delta)^{4km} N,
\end{equation}
\begin{equation} \label{incsizeI} \delta_m \geq (1+c\theta(h,\delta_{m-1}))\delta_{m-1}, 
\end{equation}
and
\begin{equation}\label{dmI} d_m \leq (c\delta)^{-2} d_{m-1} \leq (c\delta)^{-2 m}, 
\end{equation}
as long as 
\begin{equation} \label{delmI} C,  \delta_m^{-1}\leq \cQ', \qquad d_m/q_0 \leq \cQ, \qquad \text{and} \qquad N_m\geq \sqrt{N}.
\end{equation}
By (\ref{incsizeI}), the density $\delta_m$ will exceed $1$, and hence (\ref{delmI}) must fail, for $m=M=M(h,\delta)$, where $$M(h,\delta)=\begin{cases} C\log (2\delta^{-1})& \text{if }\ell \geq 3 \\ C\log^{(k-1)^2+2}(2\delta^{-1}) & \text{if }\ell=2 \\ \exp\left(C\frac{\log(2\delta^{-1})}{\log\log(3\delta^{-1})}\right) & \text{if }\l=1 \end{cases}. $$ However, for $\l\geq 2$, by (\ref{NmI}), (\ref{incsizeI}), and (\ref{dmI}), (\ref{delmI}) holds for $m=M$ if \begin{equation}\label{endgame} (c\delta)^{4kM}=\exp\left(C\log^{[2\mu'(k,\l)]^{-1}}(2\delta^{-1})\right)\leq \cQ = \exp(c\sqrt{\log N}).\end{equation} Therefore, (\ref{endgame}) must fail, or in other words $\delta \ll_{h} \exp(-c(\log N)^{\mu'(k,\l)}),$ as claimed. 

\noindent Similarly, for $\l=1$, (\ref{delmI}) holds for $m=M$ if \begin{equation}\label{endgame2} (c\delta)^{4kM}=\exp\left(\exp\left(C\frac{\log(2\delta^{-1})}{\log\log(3\delta^{-1})}\right)\right)\leq \cQ = \exp(c\sqrt{\log N}).\end{equation} Therefore, (\ref{endgame2}) must fail, which yields \begin{equation*}\frac{\log(2\delta^{-1})}{\log\log(3\delta^{-1})} \geq c\log \log N, \end{equation*} and finally \begin{equation*} \delta \ll_h (\log N)^{-c\log\log\log N}, \end{equation*} completing the proof.
\end{proof}

\subsection{$L^2$ concentration and density increment lemmas} As usual, we prove Lemma \ref{mainit} by locating one small denominator $q$ such that $\widehat{f_A}$ has $L^2$ concentration around rationals with denominator $q$, then invoke a standard lemma stating that $L^2$ concentration of  $\widehat{f_A}$ implies the existence a long arithmetic progression on which $A$ has increased density.

\begin{lemma}  \label{L2I} Suppose $A\subseteq [L]$ with $|A|=\delta L$, $L\geq \sqrt{N}$, $\eta=c_0\delta$, and  $\gamma=\eta^{-2k}/L$. Further suppose $(A-A)\cap h_d(\Lambda_d)\subseteq \{0\}$,  $C_0,\delta^{-1}\leq \cQ'$, $q_0 \mid d$, and $d/q_0\leq \cQ$. If $|A\cap(L/9,8L/9)|\geq 3\delta L/4$, then there exists $q\leq \eta^{-2}$ such that 
\begin{equation*} \int_{\mathbf{M}'_q(\gamma)} |\widehat{f_A}(\alpha)|^2d\alpha \gg_{h} \theta(h,\delta) \delta^{2} L.  
\end{equation*}  
\end{lemma}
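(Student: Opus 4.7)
The plan is a Hardy-Littlewood density increment argument adapted from the analogous $L^2$ concentration lemma in \cite{DR} (see also \cite{ricemax}), now applied to the prime-sieved sum $\nu_d$. I choose box sides $x_i \asymp (\eta L/d^k)^{1/k}$, so that $h_d(B) \subset (-L/9, L/9)$ for $B = [x_1] \times \cdots \times [x_{\l}]$; the concentration of $A$ on $(L/9, 8L/9)$ then keeps the relevant pairs $(a, a') \in A \times A$ with $a - a' = h_d(\bsn)$ safely inside $[L]$. Introduce the sieve-weighted exponential sum
\begin{equation*}
S_d(\alpha) = \sum_{\bsn \in B} \nu_d(\bsn) e(h_d(\bsn) \alpha),
\end{equation*}
so that $S_d(0) \gg_h x_1 \cdots x_{\l}/q_0^{\l}$ by Lemma~\ref{sieve2} and \eqref{x/q0}.

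By Plancherel,
\begin{equation*}
\int_\T S_d(\alpha) |\widehat{1_A}(\alpha)|^2\, d\alpha = \sum_{\bsn \in B} \nu_d(\bsn) \bigl|A \cap (A + h_d(\bsn))\bigr|,
\end{equation*}
and the hypothesis $(A-A) \cap h_d(\Lambda_d) \subseteq \{0\}$, combined with $\operatorname{supp}(\nu_d) \subseteq W_d(Y) \subseteq \Lambda_d$, collapses the right-hand side to $\sigma_d \cdot |A|$, where $\sigma_d = \sum_{\bsn \in B,\, h_d(\bsn) = 0} \nu_d(\bsn)$ is controlled by the Deligne nondegeneracy (the zero locus is $(\l-1)$-dimensional) and so is of lower order than $\delta\cdot S_d(0)$. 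Decomposing $1_A = f_A + \delta 1_{[L]}$ and comparing with the "expected" contribution $\delta^2 \int_\T S_d |\widehat{1_{[L]}}|^2 \gg \delta^2 L \cdot S_d(0)$ gives, after Cauchy-Schwarz absorbs the cross term,
\begin{equation*}
\Bigl| \Re \int_\T S_d(\alpha) |\widehat{f_A}(\alpha)|^2\, d\alpha \Bigr| \gg_h \delta^2 L \cdot S_d(0).
\end{equation*}
A uniform minor arc bound $|S_d(\alpha)| \ll_h \theta(h,\delta)\,\delta\,S_d(0)$ for $\alpha \in \mathfrak{m}(\gamma, \eta^{-2})$ (or a suitably refined moment version for $\l = 2$) bounds the minor arc contribution via Parseval, and the trivial estimate $|S_d| \leq S_d(0)$ on $\mathfrak{M}(\gamma, \eta^{-2})$ together with the disjointness \eqref{majdisj} and the nested structure of $\mathbf{M}'_q$ then isolates a single $q \leq \eta^{-2}$ carrying $L^2$ mass $\gg_h \theta(h,\delta)\, \delta^2 L$.

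The main obstacle is the uniform minor arc bound for $S_d$. It reduces, via Vaughan's identity to strip the logarithmic (von Mangoldt) prime weights baked into $\nu_d$, to bilinear exponential sums with polynomial phases, treated by Weyl differencing combined with Deligne-Weil bounds on complete sums modulo prime powers. The $\P$-Deligne hypothesis, together with the sieve set $W_d(Y)$ that removes zeros of $\grad h_d$, ensures that the relevant complete sums have nonsingular top-degree reduction modulo each relevant prime power and that Hensel's lemma applies uniformly in $d$. The $\theta(h,\delta)$ exponent is then calibrated from the resulting power saving: for $\l \geq 3$ a Birch-type $k$-th moment gives $\theta = 1$; for $\l = 2$ a two-dimensional Deligne bound produces the $\log^{-[(k-1)^2+1]}$ saving; and for $\l = 1$ the Bloom-Maynard/Arala large-sieve refinement yields the $\log\log\log$ bound.
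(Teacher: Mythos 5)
Your overall skeleton---a balanced-function count of differences $h_d(\bsn)$ weighted by $\nu_d$, the vanishing of the main count from the hypothesis $(A-A)\cap h_d(\Lambda_d)\subseteq\{0\}$, and a major/minor arc dichotomy for the exponential sum---matches the paper. But the step where you ``isolate a single $q\leq\eta^{-2}$'' is where the lemma actually lives, and your proposal for it fails. Using only the trivial estimate $|S_d(\alpha)|\leq S_d(0)$ on $\mathfrak{M}(\gamma,\eta^{-2})$ gives total major-arc mass $\int_{\mathfrak{M}}|\widehat{f_A}|^2\gg\delta^2L$, but there are on the order of $Q^2=\eta^{-4}$ arcs $\mathbf{M}_{a/q}$, so naive pigeonholing only produces a single $q$ carrying mass $\gg\delta^6L$---a loss of $\delta^4$ that is fatal to the density increment (and certainly does not yield $\theta(h,\delta)\delta^2L$ with $\theta=1$ for $\l\geq3$). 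What is needed, and what the paper proves in Section \ref{expest} via Deligne's bound on the local sums $\mathcal{G}(a,q)$ (Corollary \ref{loccor}), is the quantitative major-arc decay \eqref{SmajII}: $|S(\alpha)|\ll (q/\vphi(q))^C((k-1)^2+2)^{\omega(q)}T/q$ for $\l=2$ and $|S(\alpha)|\ll C^{\omega(q)}T/q^{3/2}$ for $\l\geq3$. For $\l\geq3$ the resulting series over $q$ converges and a single $q$ is found losslessly; for $\l=2$ one applies the Ruzsa--Sanders weighted pigeonhole (Proposition \ref{rstrick}) together with the mean-value bound $\sum_{q\leq Q}b(q)\ll Q\log^{(k-1)^2+1}Q$ from Hall--Tenenbaum, and \emph{this} is the source of the factor $\theta(h,\delta)$. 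Your proposal instead attributes $\theta(h,\delta)$ to the minor-arc bound; the paper's minor-arc estimate \eqref{SminII} is simply $|S(\alpha)|\leq\delta T/8$, with no $\theta$, and its only role is to make the minor-arc contribution negligible against $\delta^2LT$.

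The $\l=1$ case is also essentially missing. ``The Bloom--Maynard/Arala large-sieve refinement yields the $\log\log\log$ bound'' is not a proof sketch: the paper's argument replaces Plancherel by Cauchy--Schwarz using an $L^2$ bound \eqref{1varest} for $S$ over a full major arc, discards a set $R$ of low-mass rationals, dyadically pigeonholes in both the denominator and the mass, and then plays the additive-energy estimate for rationals with bounded denominators (Lemma \ref{BME}) against a Chang-type inequality (Lemma \ref{changlem}) to extract $\theta\gg\exp(-C\log(2\delta^{-1})/\log\log(3\delta^{-1}))$. None of these steps is routine, and none appears in your outline.
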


\noindent Lemma \ref{mainit} follows from Lemma \ref{L2I} and the following standard $L^2$ density increment lemma.

\begin{lemma}[Lemma 2.3 in \cite{thesis}, see also \cite{Lucier}, \cite{Ruz}] \label{dinc} Suppose $A \subseteq [L]$ with $|A|=\delta L$. If  $0< \theta \leq 1$, $q \in \N$, $\gamma>0$, and
\begin{equation*} \int_{\mathbf{M}'_q(\gamma)}|\widehat{f_A}(\alpha)|^2d\alpha \geq \theta\delta^2 L,
\end{equation*} 
then there exists an arithmetic progression $P=\{x+a q : 1\leq a \leq L'\}$ with $qL' \gg \min\{\theta L, \gamma^{-1}\}  $ and $|A\cap P| \geq (1+\theta/32)\delta L'$.
\end{lemma}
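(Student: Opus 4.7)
I plan to carry out a standard $L^2$ density-increment argument, adapted to $\Z$.

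First, I choose $L' = \lfloor c_1 \min\{\theta L, \gamma^{-1}\}/q\rfloor$ for a small absolute constant $c_1>0$, and let $B = q\cdot\{1,\dots,L'\}$. Computing the geometric sum $\widehat{1_B}$ explicitly, one gets $\widehat{1_B}(a/q+\beta) = \widehat{1_B}(\beta)$ for $a\in\Z$, and using $|\sin(y)|\geq 2|y|/\pi$ for $|y|\leq\pi/2$ yields the uniform lower bound $|\widehat{1_B}(\alpha)|^2 \geq 4(L')^2/\pi^2$ on $\mathbf{M}'_q(\gamma)$ (by our choice of $L'$, we have $|\beta|\leq 1/(2L'q)$ throughout $\mathbf{M}'_q(\gamma)$).

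Next, define $g := f_A * 1_{-B}$, so that $g(x) = |A\cap(x+B)| - \delta\,|[L]\cap(x+B)|$. Since $\sum_x f_A(x) = 0$, also $\sum_x g(x) = 0$. By Plancherel and the Fourier bound,
$$\sum_x g(x)^2 = \int_\T |\widehat{f_A}(\alpha)|^2|\widehat{1_B}(\alpha)|^2\,d\alpha \geq \tfrac{4}{\pi^2}\theta\delta^2 L\,(L')^2.$$
The core step is a sign analysis exploiting asymmetric $L^\infty$ bounds: since $g(x)\geq -\delta\,|[L]\cap(x+B)|$, we have $\|g^-\|_\infty \leq \delta L'$, while $\|g^+\|_\infty \leq L'$ is trivial. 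Setting $T:=\sum g^+=\sum g^-$ and $M:=\max_x g(x)$, I get $\sum g^2 \leq (M+\delta L')T$, and, using that $\operatorname{supp}(g)\subseteq[1-L'q,L]$ has size $\leq 2L$, $T\leq 2L\min(M,\delta L')$. A two-case analysis then concludes $M\geq (\theta/32)\delta L'$: if $M > \delta L'$ this is automatic since $\theta\leq 1$, and if $M \leq \delta L'$, then $\sum g^2 \leq 4L\delta L' M$, forcing $M \geq c\theta\delta L'$ for some absolute $c \geq 1/32$.

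Finally, a maximizer $x^*$ gives the AP $P := x^* + B = \{x^* + aq : 1 \leq a \leq L'\}$. When $x^* \in I := [0, L - L'q]$, we have $|[L]\cap(x^*+B)|=L'$, so $|A\cap P| = g(x^*) + \delta L' \geq (1+\theta/32)\delta L'$, as required. The main technical obstacle is ensuring the maximum is attained in $I$ rather than in the boundary strip of size $\leq 2L'q$, where the equality $|[L]\cap(x+B)|=L'$ fails. Since $|g(x)|\leq|[L]\cap(x+B)|$ controls the boundary, the $L^2$-mass of $g$ outside $I$ is at most $2(L')^3q$; in the intended parameter regime (where $\min\{\theta L,\gamma^{-1}\}$ is a small multiple of $\theta\delta^2 L$, as holds throughout the iteration in Lemma~\ref{L2I}), choosing $c_1$ sufficiently small makes this at most half of $\sum g^2$, so the preceding estimates can be restricted to $I$ and the maximizer chosen there, completing the proof.
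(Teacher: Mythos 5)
Your core argument is the standard route to this lemma, and most of it is executed correctly: taking $B=q\cdot\{1,\dots,L'\}$ with $qL'$ a small multiple of $\min\{\theta L,\gamma^{-1}\}$, the lower bound $|\widehat{1_B}(\alpha)|\geq 2L'/\pi$ on $\mathbf{M}'_q(\gamma)$, Plancherel to get $\sum_x g(x)^2\gg\theta\delta^2L(L')^2$ for $g=f_A*1_{-B}$, and then the mean-zero identity $\sum g^+=\sum g^-$ combined with the asymmetric bounds $g\geq-\delta L'$ and $g\leq M$ to force $M=\max_x g(x)\geq(\theta/32)\delta L'$. Up to that point your inequalities are correct.

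The genuine gap is the boundary step, and your patch does not prove the lemma as stated. To convert $g(x^*)\geq(\theta/32)\delta L'$ into $|A\cap(x^*+B)|\geq(1+\theta/32)\delta L'$ you need $|[L]\cap(x^*+B)|$ to be essentially $L'$, and your way of forcing the maximizer into the interior is to discard the $L^2$-mass on the $\leq 2qL'$ boundary translates using the trivial bound $|g|\leq L'$, i.e. you need $2q(L')^3\leq\tfrac12\sum_x g(x)^2$, which requires $qL'\ll\theta\delta^2L$. That is incompatible with the conclusion's requirement $qL'\gg\min\{\theta L,\gamma^{-1}\}$ whenever $\min\{\theta L,\gamma^{-1}\}$ is not $O(\theta\delta^2 L)$, and nothing in the hypotheses of Lemma \ref{dinc} rules this out; you instead import it as an unstated side condition (``the intended parameter regime''). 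So what you establish is a conditional variant, not the quoted statement, which is a general standalone lemma (and is used as such in the deduction of Lemma \ref{mainit}); the cited sources prove it without any relation between $\gamma$, $\theta$, $\delta$, and $L$. Two further points: it is true that in this paper's application one has $\gamma^{-1}=(c_0\delta)^{2k}L\leq\theta\delta^2L\leq\theta L$, so your conditional version would in fact suffice for the iteration here, but that is an observation about the application, not a proof of the lemma; and once you restrict all sums to the interior $I$ you lose the exact identity $\sum g^+=\sum g^-$ on which your chain of inequalities rests, so even in your regime the restricted argument needs the (routine, but currently missing) bookkeeping for $|\sum_{x\in I}g(x)|\leq 2q(L')^2$.
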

\noindent The deduction of Lemma \ref{mainit} from Lemmas \ref{L2I} and \ref{dinc} is standard, and in particular is identical to the analogous deduction in \cite[Section 4]{DR}.



\subsection{Proof of Lemma \ref{L2I}} \label{L2Isec} Before delving into the proof of Lemma \ref{L2I}, we take the opportunity to define some relevant sets and quantities, depending on our $\P$-Deligne polynomial $h\in \Z[x_1,\dots,x_{\ell}]$, scaling parameter $d$, a parameter $Y>0$, and the size $L$ of the ambient interval.

We define $W_d$, $w_d$, $\gamma_d$, $j_d$, and $\nu_d$ in terms of $h$ as in Section \ref{sievesec}. We  let $M=\left(\frac{L}{9J}\right)^{1/k}$,  where $J$ is the sum of the absolute value of all the coefficients of $h_d$, and hence $h_d([M]^{\ell})\subseteq [-L/9,L/9]$.  We let $Z=\{\bsn\in \Z^{\ell}: h_d(\bsn)=0\}$, and we let $H=\left([M]^{\ell}\cap W_{d}(Y)\right)\setminus Z$. We note that the hypotheses $\cQ'\geq C_0$ and $L\geq \sqrt{N}$ allow us to assume at any point that $\cQ',\cQ,$ and $L$ are all sufficiently large with respect to $h$. Under this assumption, it follows from Lemmas \ref{sieve2} and \ref{sieve1} (with $t=c\sqrt{\log M}/\log Y$), and the estimate 
\begin{equation} \label{Zest} |Z\cap[M]^{\ell}|\ll_h M^{\ell-1},\end{equation}
that for $\l\geq 2$ we have 
\begin{equation*}\label{Hsize} T = \sum_{\bsn \in H} \nu_d(\bsn) \gg_h \left|\prod_{i=1}^{\l}\left(M-\chi((r_d)_i)M^{\rho}/\rho \right) \right| \gg (M/q_0)^{\l}, \end{equation*}
and for $\l=1$ we have \begin{equation*}\label{Hsize1} T = \sum_{n \in H} \nu_d(n) \gg_h \int_1^M h_d'(x)\left(1-\chi(r_d)(dx)^{\rho-1}\right)dx \gg L/q_0. \end{equation*}

\begin{proof}[Proof of Lemma \ref{L2I} for $\l\geq 2$]
\label{massproof} Suppose $\l\geq 2$, $A\subseteq [L]$ with $|A|=\delta L$, $(A-A)\cap h_d(\Lambda_d) \subseteq \{0\}$, $C_0,\delta^{-1}\leq \cQ'$, $q_0 \mid d$, and $d/q_0\leq \cQ$.  Further, let $\eta=c_0\delta$, $Q=\eta^{-2}$, and $Y=\eta^{-2k}$.  As $h_d(H) \subseteq [-L/9,L/9]\setminus \{0\}$, we have
\begin{align*} \sum_{\substack{x \in \Z \\ \bsn\in H}} f_A(x)f_A(x+h_d(\bsn))\nu_d(\bsn)&=\sum_{\substack{x \in \Z \\ \bsn \in H}} 1_A(x)1_A(x+h_d(\bsn))\nu_d(\bsn) -\delta\sum_{\substack{x \in \Z \\ \bsn\in H}} 1_A(x)1_{[L]}(x+h_d(\bsn))\nu_d(\bsn) \\ &\qquad -\delta \sum_{\substack{x \in \Z \\ \bsn\in H}} 1_{A}(x+h_d(\bsn))1_{[L]}(x)\nu_d(\bsn)+\delta^2\sum_{\substack{x \in \Z \\ \bsn\in H}} 1_{[L]}(x)1_{[L]}(x+h_d(\bsn))\nu_d(\bsn)  \\&\leq \Big(\delta^2L -2\delta|A\cap (L/9,8L/9)|\Big)T. 
\end{align*}
Therefore, if $|A \cap (L/9, 8L/9)| \geq 3\delta L/4$, we have
\begin{equation}\label{neg} \sum_{\substack{x \in \Z \\ \bsn\in H}} f_A(x)f_A(x+h_d(\bsn)) \leq -\delta^2LT/2.
\end{equation} 
We see from (\ref{Zest}) and orthogonality of characters that 
\begin{equation}\label{orth} 
\sum_{\substack{x \in \Z \\ \bsn\in H}} f_A(x)f_A(x+h_d(\bsn))=\int_0^1 |\widehat{f_A}(\alpha)|^2 S(\alpha)d\alpha +O_h(LM^{\ell-1}\log^{\l}(dM)),
\end{equation} 
where 
\begin{equation*}S(\alpha)= \sum_{\bsn \in [M]^{\ell} \cap W_{d}(Y)}\nu_d(\bsn)e(h_d(\bsn)\alpha).
\end{equation*} 
Combining (\ref{neg}) and (\ref{orth}), we have  
\begin{equation} \label{mass} 
\int_0^1 |\widehat{f_A}(\alpha)|^2|S(\alpha)|d\alpha \geq \delta^2LT/4.
\end{equation} 
Letting $\gamma=\eta^{-2k}/L$, we deduce in Section \ref{expest} that for $\alpha \in \mathbf{M}_q(\gamma), \ q\leq Q $, we have 
\begin{equation} \label{SmajII} |S(\alpha)| \ll_{h} \begin{cases} (q/\vphi(q))^C((k-1)^2+2)^{\omega(q)}T/q & \l=2 \\  C^{\omega(q)}T/q^{3/2}  & \l\geq 3 \end{cases},
\end{equation} 
where $C=C(h)$, while for $\alpha \in \mathfrak{m}(\gamma,Q)$ we have 
\begin{equation} \label{SminII} |S(\alpha)| \leq  \delta T/8.
\end{equation} From (\ref{SminII}) and Plancherel's Identity, we have \begin{equation*}  \int_{\mathfrak{m}(\gamma,Q)} |\widehat{f_A}(\alpha)|^2|S(\alpha)|d\alpha \leq \delta^2LT/8, \end{equation*} which together with (\ref{mass}) yields \begin{equation}\label{majmass}  \int_{\mathfrak{M}(\gamma,Q)}|\widehat{f_A}(\alpha)|^2|S(\alpha)|d\alpha \geq \delta^2 LT/8. \end{equation} 
From (\ref{SmajII}) and (\ref{majmass}) , we have 
\begin{equation} \label{majmassII} \sum_{q=1}^Q  (q/\vphi(q))^C((k-1)^2+2)^{\omega(q)}q^{-1} \int_{\mathbf{M}_q(\gamma)}|\widehat{f_A}(\alpha)|^2 {d}\alpha \gg_{h} \delta^2L
\end{equation}
when $\l=2$ and 
\begin{equation} \label{majmass3} \sum_{q=1}^Q  C^{\omega(q)}q^{-3/2} \int_{\mathbf{M}_q(\gamma)}|\widehat{f_A}(\alpha)|^2 {d}\alpha \gg_{h} \delta^2L
\end{equation}
when $\l\geq 3$. For $\ell=2$, the function $b(q)=(q/\vphi(q))^C((k-1)^2+2)^{\omega(q)}$ satisfies $b(qr)\geq b(r)$, and we make use of the following proposition, which is based on a trick that originated in \cite{Ruz}.
\begin{proposition}[Proposition 5.6, \cite{ricemax}] \label{rstrick} For any $\gamma,Q>0$ satisfying $2\gamma Q^2<1$ and any function $b: \N \to [0,\infty)$ satisfying $b(qr)\geq b(r)$ for all $q,r\in \N$, we have $$\max_{q\leq Q} \int_{\mathbf{M}'_q(\gamma)}|\widehat{f_A}(\alpha)|^2 {d}\alpha \geq Q \Big(2\sum_{q=1}^Q b(q)\Big)^{-1} \sum_{r=1}^Q \frac{b(r)}{r}\int_{\mathbf{M}_r(\gamma)}|\widehat{f_A}(\alpha)|^2 {d}\alpha. $$
\end{proposition}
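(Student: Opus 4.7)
The plan is to lower bound the maximum by a weighted average with weights $b(q)$, then unfold $\mathbf{M}'_q(\gamma)$ as a disjoint union indexed by divisors and swap the order of summation, at which point the monotonicity hypothesis on $b$ takes over.

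First I would use the disjointness observation \eqref{majdisj}: since $2\gamma Q^2 < 1$, the sets $\mathbf{M}_{a/q}(\gamma)$ attached to distinct reduced fractions $a/q$ with $q \leq Q$ are pairwise disjoint. Because $\mathbf{M}'_q(\gamma) = \bigcup_{r \mid q} \mathbf{M}_r(\gamma)$ and each $\mathbf{M}_r(\gamma)$ is itself a disjoint union over reduced fractions $a/r$, this yields the disjoint decomposition $\mathbf{M}'_q(\gamma) = \bigsqcup_{r \mid q} \mathbf{M}_r(\gamma)$ for every $q \leq Q$.

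Next I would bound the maximum from below by the $b$-weighted average
\[
\max_{q\leq Q} \int_{\mathbf{M}'_q(\gamma)}|\widehat{f_A}(\alpha)|^2\,d\alpha \;\geq\; \frac{1}{\sum_{q=1}^Q b(q)}\sum_{q=1}^Q b(q)\int_{\mathbf{M}'_q(\gamma)}|\widehat{f_A}(\alpha)|^2\,d\alpha,
\]
expand the integrals using the disjoint union above, and interchange the sums to get
\[
\sum_{q=1}^Q b(q)\int_{\mathbf{M}'_q(\gamma)}|\widehat{f_A}(\alpha)|^2\,d\alpha \;=\; \sum_{r=1}^Q \Bigl(\sum_{\substack{q\leq Q\\ r\mid q}} b(q)\Bigr)\int_{\mathbf{M}_r(\gamma)}|\widehat{f_A}(\alpha)|^2\,d\alpha.
\]

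Now the hypothesis $b(qr) \geq b(r)$ does the work: writing $q = rk$ with $k \leq Q/r$,
\[
\sum_{\substack{q\leq Q\\ r\mid q}} b(q) \;=\; \sum_{k=1}^{\lfloor Q/r\rfloor} b(rk) \;\geq\; \lfloor Q/r\rfloor\, b(r) \;\geq\; \frac{Q}{2r}\,b(r),
\]
where the last inequality uses $\lfloor x\rfloor \geq x/2$ for $x \geq 1$ (valid here since $r \leq Q$). Substituting this into the previous display gives exactly the claimed bound. There is no real obstacle: the only thing to watch is to make sure the disjointness hypothesis $2\gamma Q^2 < 1$ is actually invoked so that $\int_{\mathbf{M}'_q(\gamma)}$ genuinely equals the sum of $\int_{\mathbf{M}_r(\gamma)}$ over divisors of $q$, and to track the factor of $2$ coming from the floor estimate, which is precisely what appears in the denominator of the statement.
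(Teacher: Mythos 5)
Your proof is correct and is essentially the argument of the cited source (the paper itself only quotes Proposition 5.6 from \cite{ricemax} without reproducing the proof): lower-bound the maximum by the $b$-weighted average, decompose $\mathbf{M}'_q(\gamma)$ as the disjoint union $\bigcup_{r\mid q}\mathbf{M}_r(\gamma)$ using $2\gamma Q^2<1$, swap the order of summation, and apply $b(rk)\geq b(r)$ together with $\lfloor Q/r\rfloor\geq Q/2r$. All steps check out, including the provenance of the factor $2$ in the denominator.
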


\noindent Because $b$ is multiplicative, $b(p^v)=((k-1)^2+2)(1+1/(p-1))^C\ll_k 1$ for all prime powers $p^v$, and $$\sum_{q=1}^Q \frac{b(q)}{q}\leq \prod_{p\leq Q} \left(1+\frac{b(p)}{p}+\frac{b(p)}{p^2}+\cdots\right)=\prod_{p\leq Q} \left(1+\frac{(k-1)^2+2}{p}+O_k(1/p^2)\right)\ll_k \log^{(k-1)^2+2}Q, $$ it follows from \cite[Theorem 01]{HallTen} that 
\begin{equation*} \sum_{q=1}^Q b(q) \ll_k Q\log^{(k-1)^2+1} Q, 
\end{equation*}
and the lemma for $\ell=2$ follows from (\ref{majmassII}) and Proposition \ref{rstrick}. For $\l \geq 3$, since $C^{\omega(q)}\ll_{h,\epsilon} q^{\epsilon}$ for every $\epsilon>0$, the sum $\sum_{q=1}^{\infty}  C^{\omega(q)}q^{-3/2}$ is convergent, and hence (\ref{majmass3}) immediately yields $$\max_{q\leq Q} \int_{\mathbf{M}_q(\gamma)}|\widehat{f_A}(\alpha)|^2 {d}\alpha \gg_h \delta^2 L.$$ Since $\mathbf{M}_q(\gamma)\subseteq \mathbf{M}'_q(\gamma), $ this establishes the lemma for $\ell \geq 3$. \end{proof}

The major contribution of \cite{BloomMaynard} was to replace an iterative estimate on repeated sumsets of rational numbers, developed in \cite{PSS}, with a single higher additive energy estimate. The definitions and results that we import from \cite{BloomMaynard} are as follows, after which we prove Lemma \ref{L2I} in the case $\l=1$.
 
\begin{definition} For $m\geq 1$, $\mathcal{B}\subseteq \T$, and $\vepsilon>0$, we define $$E_{2m}(\mathcal{B}) = |\{b_1,\dots,b_{2m}\in \mathcal{B} : b_1+\cdots+b_m = b_{m+1}+\cdots+b_{2m}\}| $$ and $$E_{2m}(\mathcal{B},\vepsilon)=|\{b_1,\dots,b_{2m}\in \mathcal{B} : \norm{b_1+\cdots+b_m-b_{m+1}-\cdots-b_{2m}}_{\T}\leq \vepsilon\}| .$$
\end{definition}

\begin{lemma}[Theorem 2, \cite{BloomMaynard}]\label{BME} Suppose $m\geq 2$, $Q\geq 4$ and $\mathcal{B}\subseteq \{a/q\in \T: q\leq Q\}$, and for each $q$ let $\mathcal{B}_q$ denote the elements of $\mathcal{B}$ of reduced denominator $q$. If $|\mathcal{B}_q|\leq n$ for all $q$, then \begin{equation*}E_{2m}(\mathcal{B})\leq (Qn)^m\log^{C^m}Q, \end{equation*} where $C>0$ is an absolute constant.
\end{lemma}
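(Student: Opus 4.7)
The plan is to parameterize the $2m$-tuples contributing to $E_{2m}(\mathcal{B})$ by the reduced denominators of their entries and then prove the bound by induction on $m$, ensuring that each inductive step loses only a polylogarithmic factor in $Q$. Writing each $b_i = a_i/q_i$ in lowest terms with $q_i \le Q$ and $a_i \in \mathcal{B}_{q_i}$, the defining equation $b_1+\cdots+b_m = b_{m+1}+\cdots+b_{2m}$ in $\T$ becomes a linear congruence modulo $q^{\ast}=\mathrm{lcm}(q_1,\ldots,q_{2m})$, and once $2m-1$ of the numerators are fixed the remaining one is determined modulo its denominator, leaving at most $n$ legal completions in $\mathcal{B}_{q_{2m}}$.

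For the base case $m=2$, I would bound $E_4(\mathcal{B})=\sum_s r_2(s)^2$ directly rather than via $\max_s r_2 \cdot \sum_s r_2$, which turns out to be too lossy. Summing over $s=a/q$ in lowest terms with $q\le Q^2$ and decomposing by the $\mathrm{lcm}$ pattern of the two denominator pairs $(q_1,q_2)$, $(q_3,q_4)$ representing $s$, the contribution should collapse to a sum of the form $n^2\sum_{L\le Q^2}d(L)^{O(1)}/L \ll (Qn)^2(\log Q)^{O(1)}$; the second-moment structure is essential for absorbing the $Q$-dependence.

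For the inductive step, I would aim for an inequality of the form $E_{2(m+1)}(\mathcal{B}) \le K_m \cdot E_{2m}(\mathcal{B})$ with $K_m\le Qn\cdot(\log Q)^{C^m(C-1)}$, produced by a H\"older-type splitting that peels off a single pair of entries from the $2(m+1)$-tuple. The na\"ive Cauchy--Schwarz applied to $r_{m+1}(x)=\sum_b r_m(x-b)$ only yields $E_{2(m+1)}\le |\mathcal{B}|^2 E_{2m}$, which overshoots the exponent of $Qn$ by one, so this step must exploit both the uniform cap $|\mathcal{B}_q|\le n$ and the multiplicative structure of $\mathrm{lcm}$'s to save a factor of $Qn$ at the cost of a divisor-weighted sum over $q\le Q^{O(m)}$.

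The main obstacle is precisely this refined reduction: gaining a full factor of $Qn$ over the crude bound while paying only a power of $\log Q$. This is the core of the Bloom--Maynard argument, and I anticipate that executing it would require a hierarchical dyadic organization of the denominator tuples combined with higher-divisor estimates of the form $\sum_{q\le X}d_k(q)/q\ll(\log X)^{O(k)}$, arranged so that the constant $C$ in the exponent $C^m$ remains independent of $m$ even after iterating the reduction $m-2$ times.
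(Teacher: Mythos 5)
This lemma is imported verbatim from Bloom--Maynard (their Theorem 2); the paper offers no proof of its own, so the only question is whether your sketch would stand alone. It does not: the inductive step, which is where all the content lies, is never carried out. You correctly observe that na\"ive Cauchy--Schwarz peeling gives only $E_{2(m+1)}(\mathcal{B})\le|\mathcal{B}|^2E_{2m}(\mathcal{B})$, losing a factor of $Qn$ per step relative to the target, and you then state that the needed refinement ``must exploit both the uniform cap $|\mathcal{B}_q|\le n$ and the multiplicative structure of $\mathrm{lcm}$'s'' --- but that refinement \emph{is} the theorem, and no mechanism for achieving it is proposed. Moreover, the clean multiplicative recursion $E_{2(m+1)}\le Qn(\log Q)^{O(C^m)}E_{2m}$ you posit is not how Bloom and Maynard argue, and it is far from clear that such a recursion holds: a newly adjoined pair interacts with the lcm of \emph{all} the previous denominators, so the saving cannot be localized to one step without tracking the full prime-power structure of the tuple. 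Their actual proof is a global counting argument: for any solution of $b_1+\cdots+b_m=b_{m+1}+\cdots+b_{2m}$ in $\T$ and any prime $p$, the maximal $p$-adic valuation among the denominators $q_1,\dots,q_{2m}$ must be attained at least twice (otherwise the sum has negative $p$-adic valuation and cannot be an integer), and this forced pairing of prime powers across the $2m$ denominators, combined with $|\mathcal{B}_q|\le n$ and divisor-sum estimates, is what produces the exponent $m$ on $Qn$.

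Two smaller inaccuracies: once $2m-1$ of the $b_i$ are fixed, the last one is determined exactly as an element of $\T$, not merely modulo its denominator, so there is at most one legal completion rather than $n$; and even granting a correct base case at $m=2$, an induction would need the per-step loss to be at most $(\log Q)^{C^m(C-1)}$ with $C$ uniform in $m$, which is again exactly the part that is missing. As written, the proposal is a plan that defers the core of the Bloom--Maynard argument rather than a proof of it.
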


\begin{lemma}[Lemma 7, \cite{BloomMaynard}] \label{changlem} Suppose $\vepsilon>0$, $A\subseteq [L]$ with $|A|=\delta L$ and $\mathcal{B}\subseteq \T$. Then, for each $m\geq 1$, \begin{equation*} \sum_{\alpha \in \mathcal{B}} |\widehat{1_A}(\alpha)| \ll \delta^{1-1/2m}LE_{2m}(\mathcal{B},(2L)^{-1})^{1/2m}. \end{equation*}
\end{lemma}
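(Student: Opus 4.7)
Lemma \ref{changlem} is \cite[Lemma 7]{BloomMaynard}, and I would invoke it as a black box rather than reprove it. Nevertheless, the underlying argument is the standard Chang-type duality-plus-H\"older routine, and recording the plan is useful for context.

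The idea is as follows. Choose unimodular phases $\epsilon_\alpha \in \C$ so that $\epsilon_\alpha \widehat{1_A}(\alpha) = |\widehat{1_A}(\alpha)|$, and set $F(x) = \sum_{\alpha \in \mathcal{B}} \epsilon_\alpha e(-x\alpha)$. Since $1_A$ is supported in $[L]$, H\"older's inequality applied with the exponent pair $\bigl(\tfrac{2m}{2m-1},2m\bigr)$ yields
$$\sum_{\alpha \in \mathcal{B}} |\widehat{1_A}(\alpha)| = \sum_{x \in [L]} 1_A(x) F(x) \leq |A|^{1 - 1/(2m)} \Big(\sum_{x \in [L]} |F(x)|^{2m}\Big)^{1/(2m)}.$$
Since $|A| = \delta L$, it remains to show that $\sum_{x \in [L]} |F(x)|^{2m} \ll L \cdot E_{2m}(\mathcal{B},(2L)^{-1})$. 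To do this, I would expand
$$|F(x)|^{2m} = \sum_{\alpha_1,\ldots,\alpha_{2m} \in \mathcal{B}} \epsilon_{\alpha_1} \cdots \overline{\epsilon_{\alpha_{2m}}} \, e(-x\beta), \qquad \beta = \alpha_1+\cdots+\alpha_m - \alpha_{m+1}-\cdots-\alpha_{2m},$$
swap the order with $\sum_{x \in [L]}$, and bound the inner geometric sum $\bigl|\sum_{x \in [L]} e(-x\beta)\bigr| \ll \min(L,\|\beta\|_\T^{-1})$. Stratifying the outer sum dyadically according to the size of $\|\beta\|_\T$ and invoking the monotonicity of $\vepsilon \mapsto E_{2m}(\mathcal{B},\vepsilon)$ then collapses the stratified sum to the single energy $L \cdot E_{2m}(\mathcal{B},(2L)^{-1})$. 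Combining with the H\"older step delivers the claim.

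The main obstacle is the last step: the naive dyadic stratification produces an extraneous factor of $\log L$, since $\sum_{j \geq 0} 2^{-j} E_{2m}(\mathcal{B},2^j/L)$ need not be dominated by its leading term. Bloom and Maynard finesse this by replacing the sharp cutoff $1_{[L]}$ with a smoothed weight whose Fourier transform decays like $\|\beta\|_\T^{-2}$, so that the dyadic sum becomes geometrically convergent. Since this is already executed cleanly in their proof of \cite[Lemma 7]{BloomMaynard} and the output suffices verbatim for our purposes, I would simply cite it rather than reprove it.
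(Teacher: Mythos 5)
The paper, like you, simply imports this statement as Lemma 7 of \cite{BloomMaynard} and offers no proof of its own, so citing it as a black box is exactly the paper's approach. Your accompanying sketch of the duality-plus-H\"older argument, including the observation that one must replace the sharp cutoff $1_{[L]}$ by a smoothed majorant with quadratically decaying Fourier transform to make the dyadic summation converge, is a faithful account of how Bloom and Maynard actually prove it.
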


\begin{proof}[Proof of Lemma \ref{L2I} for $\l=1$] Here we follow closely the methods of Lemmas 5 and 6 in \cite{BloomMaynard}. Suppose $\l=1$, $A\subseteq [L]$ with $|A|=\delta L$, $(A-A)\cap h_d(\Lambda_d) \subseteq \{0\}$, $C_0,\delta^{-1}\leq \cQ'$, $q_0 \mid d$, and $d/q_0\leq \cQ$.  Further, let $\eta=c_0\delta$, let $Q=\eta^{-2}$, and let $Y=\eta^{-2k}$. Similar to the beginning of the proof in the $\l\geq 2$ case, but simpler because we use only one balanced function instead of two, we have that if $|A \cap (L/9, 8L/9)| \geq 3\delta L/4$, then 
\begin{equation*}\sum_{\substack{x \in \Z \\ n\in H}} f_A(x)f_A(x+h_d(n))\nu_d(n) = \int_0^1\widehat{f_A}(\alpha)\bar{\widehat{1_A}(\alpha)} S(\alpha) d\alpha + O_h(L(dM)^{k-1}\log(dM)) \leq -3\delta^2LT/4,
\end{equation*}
where $S(\alpha)$ is defined as before, hence \begin{equation*} \int_0^1|\widehat{f_A}(\alpha)||\widehat{1_A}(\alpha)||S(\alpha)| d\alpha  \geq \delta^2LT/2. \end{equation*}
Our deduction of \eqref{SminII} in Section \ref{expest} still applies when $\l=1$, so as before, with Cauchy-Schwarz in place of Plancherel, we have \begin{equation*} \int_{\mathfrak{M}(\gamma,Q)}|\widehat{f_A}(\alpha)||\widehat{1_A}(\alpha)||S(\alpha)| d\alpha = \sum_{q=1}^Q\sum_{(a,q)=1} \int_{\mathbf{M}_q(\gamma)} |\widehat{f_A}(\alpha)||\widehat{1_A}(\alpha)||S(\alpha)| d\alpha \geq \delta^2LT/4. \end{equation*}
We will show in Section \ref{expest} that \begin{equation} \label{1varest} \int_{\mathbf{M}_{a/q}} |S(\alpha)|^2 d\alpha \ll_h C^{\omega(q)}\frac{T^2}{qL}, \end{equation}
for $q\leq Q$ and $(a,q)=1$, where $C=C(h)$, so again applying Cauchy-Schwarz we have \begin{equation*}\sum_{q=1}^Q\sum_{(a,q)=1} C^{\omega(q)}q^{-1/2} \sup_{\alpha \in \mathbf{M}_{a/q}}|\widehat{1_A}(\alpha)| \left( \int_{\mathbf{M}_{a/q}}|\widehat{f_A}(\alpha)|^2 d\alpha\right)^{1/2}\gg_h \delta^2L^{3/2}.\end{equation*}
Let $$R=\left\{a/q\in \T: q\leq Q, \int_{\mathbf{M}_{a/q}(\gamma)}|\widehat{f_A}(\alpha)|^2 {d}\alpha \leq L/Q^5\right\},$$ so $|R|\leq Q^2$ and 
$$\sum_{a/q\in R} C^{\omega(q)} q^{-1/2} \sup_{\alpha \in \mathbf{M}_{a/q}}|\widehat{1_A}(\alpha)| \left( \int_{\mathbf{M}_{a/q}}|\widehat{f_A}(\alpha)|^2 d\alpha\right)^{1/2} \ll_h Q^2\delta L^{3/2}/Q^{5/2} = \delta L^{3/2}/Q^{1/2} = \delta\eta L^{3/2},  $$ 
hence \begin{equation}\label{majmassIV} \sum_{q=1}^Q\sum_{a/q\notin R}  C^{\omega(q)}q^{-1/2} \sup_{\alpha \in \mathbf{M}_{a/q}}|\widehat{1_A}(\alpha)| \left( \int_{\mathbf{M}_{a/q}}|\widehat{f_A}(\alpha)|^2 d\alpha\right)^{1/2} \gg_{h} \delta^2L^{3/2}. \end{equation}
Further, because the measure of $\mathbf{M}_{a/q}$ is $Q^k/L$ and $|\widehat{f_A}(\alpha)|\ll \delta L$, we know $$\int_{\mathbf{M}_{a,q}(\gamma)}|\widehat{f_A}(\alpha)|^2 d\alpha \ll Q^k\delta^2L. $$
Dyadically pigeonholing in both $q$ and the integral value, there exist  $1\leq Q' \leq Q$, $Q^{-k}\leq K \leq Q^{3}$, and $\mathcal{B} \subseteq \{a/q\in \T: q\leq Q, a/q\notin R\}$ such that all reduced denominators in $\mathcal{B}$ are between $Q'$ and $2Q'$, \begin{equation}\label{K2L} \delta^2L/K^2 \leq \int_{\mathbf{M}_{a,q}(\gamma)}|\widehat{f_A}(\alpha)|^2 d\alpha \leq 2\delta^2L/K^2 \quad \text{for all }a/q\in \mathcal{B}, \end{equation} 
and 
\begin{equation}\label{BMsum} \sum_{a/q \in \mathcal{B}}  C^{\omega(q)}q^{-1/2} \sup_{\alpha \in \mathbf{M}_{a/q}}|\widehat{1_A}(\alpha)| \left( \int_{\mathbf{M}_{a/q}}|\widehat{f_A}(\alpha)|^2 d\alpha\right)^{1/2} \gg_{h} \delta^2L^{3/2}/\log^2 Q. \end{equation}  
Letting $\alpha_{a/q}$ denote the point in $\mathbf{M}_{a/q}$ on which $|\widehat{1_A}|$ attains its maximum, substituting \eqref{K2L} into \eqref{BMsum} gives 
\begin{equation}\label{Bmax} \sum_{a/q\in \mathcal{B}} |\widehat{1_A}(\alpha_{a/q})| \gg_h \frac{\delta\sqrt{Q'}KL}{\tau\log^2 Q},  \end{equation} 
where $\tau = \max_{q\leq Q} C^{\omega(q)} \leq \exp(C\log(2\delta^{-1})/\log\log(3\delta^{-1}))$.
Let $$\theta = (\delta^2 L)^{-1}\max_{q\leq Q}\int_{\mathbf{M}_{q}(\gamma)}|\widehat{f_A}(\alpha)|^2 d\alpha.$$ For fixed $q$, we let $\mathcal{B}_q$ denote the elements of $\mathcal{B}$ with reduced denominator exactly $q$, and \eqref{K2L} yields 
\begin{equation*} |\mathcal{B}_q|\delta^2L/K^2 \leq \sum_{a/q\in \mathcal{B}_q} \int_{\mathbf{M}_{a/q}}|\widehat{f_A}(\alpha)|^2 d\alpha \leq \theta\delta^2L,  \end{equation*}
so in particular \begin{equation}\label{Bqb}|\mathcal{B}_q| \leq \theta K^2.  \end{equation}
Letting $m=2\lceil \log\log(3\delta^{-1})\rceil$, \eqref{Bmax} and the pigeonhole principle ensure the existence of $\mathcal{B'}\subseteq \mathcal{B}$, contained in an interval of length $(8m)^{-1}$ satisfying \begin{equation}\label{Bmax2} \sum_{a/q\in \mathcal{B'}} |\widehat{1_A}(\alpha_{a/q})| \gg_h \frac{\delta\sqrt{Q'}KL}{m\tau\log^2 Q}. \end{equation} Letting $\Gamma = \{\alpha_{a/q}: a/q\in \mathcal{B}'\}$, \eqref{Bmax2} and Lemma \ref{changlem} yield
\begin{equation}\label{Gamma1} \frac{\delta\sqrt{Q'}KL}{\tau m\log^2 Q} \ll_h \delta^{1-1/2m}LE_{2m}(\Gamma,(2L)^{-1})^{1/2m}. \end{equation}
However, the elements inside the norm in the definition of $E_{2m}(\Gamma,(2L)^{-1})$ are all rationals of denominator at most $(Q')^{2m}$, and since $(Q')^{2m}=\mathcal{Q}^{O_h(\log\log L)}\leq L$, such a rational can only be less than $(2L)^{-1}$ in absolute value if it is $0$, meaning  $E_{2m}(\Gamma,(2L)^{-1})=E_{2m}(\Gamma)$. Combining with \eqref{Gamma1}, \eqref{Bqb}, and Lemma \ref{BME}, we have 
\begin{equation*}(Q'\theta K^2)^{m}\log^{C^m}Q\geq E_{2m}(\Gamma) \gg_h \delta \left(\frac{\sqrt{Q'} K}{\tau m\log^{2}Q}\right)^{2m}, \end{equation*} 
which rearranges to
\begin{equation*} \theta \geq \frac{\delta^{1/m}}{m^{2}\tau^2\log^{C^m}Q}.
\end{equation*} Since $Q\ll_h \delta^{-2}$ and $m=2\lceil \log\log(3\delta^{-1}) \rceil$, this yields the desired lower bound $$ \theta \gg_h \exp\left(-C\frac{\log(2\delta^{-1})}{\log\log(3\delta^{-1})} \right).$$
\end{proof}

\section{Criteria for $\P$-Deligne Polynomials}
In this section we establish the sufficient conditions for $\P$-Deligne polynomials enumerated in Theorem \ref{Pcrit}. Most of the statements we make here are analogous to certain statements in Sections 2 and 5 of \cite{DR}, and in those cases we simply mention the corresponding statement in \cite{DR} and that the proof is essentially the same.

We begin with two geometric lemmas. The first is a straightforward consequence of the point-counting estimates for varieties over finite fields due to Lang and Weil; a short proof is provided in \cite[Lemma 5.2]{DR}. We use $V^\ns$ to denote the nonsingular points on a variety $V$.

\begin{lemma} \label{langweillem}
Let $k$, $\ell$, $m$, and $r$ be positive integers, and let $q$ be a prime power. Let $V$ be a (reduced) closed subvariety of $\bbP^\ell$, defined over $\F_q$, of degree $k$ and dimension $r$. Let $m \ge 1$ be the number of geometrically irreducible components of $V$ which are defined over $\F_q$. Then
	\begin{equation}\label{eq:LangWeil}
		|V(\F_q)|,\ |V^\ns(\F_q)| = mq^r + O_{k,\ell,r}(q^{r-1/2}).
	\end{equation}
Moreover, the same is true if we replace $V$ with a closed subvariety $W \subseteq \bbA^\ell$.
\end{lemma}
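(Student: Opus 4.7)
The plan is to reduce to the classical Lang--Weil theorem for a single geometrically irreducible variety via a decomposition into components, using degree and dimension bounds to absorb all residual contributions into the error term.

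I would begin by writing $V = V_1 \cup \cdots \cup V_s$ as the union of its geometrically irreducible components over $\bar{\F}_q$. Since $\deg V = k$ and degree is additive over top-dimensional components, we have $s \le k$ and $\deg V_i \le k$ for every $i$. Re-indexing so that $V_1,\dots,V_m$ are the $r$-dimensional components defined over $\F_q$, the classical Lang--Weil bound applied to each of these yields $|V_i(\F_q)| = q^r + O_{k,\ell,r}(q^{r-1/2})$. Summing gives the claimed main term $mq^r$; the remaining task is to show that every other contribution is $O_{k,\ell,r}(q^{r-1/2})$.

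Next, I would handle the other components. Any component of dimension strictly less than $r$ contributes $O_{k,\ell,r}(q^{r-1})$ points over $\F_q$ (by induction on $r$, or by a naive bound via projection and B\'ezout using $\deg V_i \le k$). A top-dimensional component $V_i$ not defined over $\F_q$ is moved by Frobenius to a distinct component $V_i^{\operatorname{Frob}}$, so $V_i(\F_q) \subseteq V_i \cap V_i^{\operatorname{Frob}}$; this intersection has dimension at most $r-1$ and degree $O_k(1)$, so again contributes $O_{k,\ell,r}(q^{r-1})$. Likewise, any intersection $V_i \cap V_j$ for $i \ne j$ has dimension $\le r-1$ and degree $O_k(1)$, so removing these overlaps from the total $\sum_i |V_i(\F_q)|$ via inclusion--exclusion costs only $O_{k,\ell,r}(q^{r-1})$. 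Combining these estimates gives $|V(\F_q)| = mq^r + O_{k,\ell,r}(q^{r-1/2})$.

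For the nonsingular-points statement, I would use that $V^{\sing}$ is a proper closed subvariety of $V$ of dimension at most $r-1$ and degree bounded in terms of $k$ and $\ell$, so $|V^{\sing}(\F_q)| = O_{k,\ell,r}(q^{r-1})$, and $|V^{\ns}(\F_q)| = |V(\F_q)| - |V^{\sing}(\F_q)|$ satisfies the same asymptotic. For the affine case $W \subseteq \bbA^\ell$, I would take the projective closure $\bar W \subseteq \bbP^\ell$, apply the projective statement just proved, and subtract off the contribution at infinity $\bar W \cap H_\infty$, which is a variety of dimension $\le r-1$ whose geometrically irreducible $\F_q$-components are in bijection with those of $\bar W$ lost by restricting to the affine chart; these corrections are again $O_{k,\ell,r}(q^{r-1/2})$.

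The main obstacle, of course, is the classical Lang--Weil estimate itself for a single geometrically irreducible variety defined over $\F_q$, which I would simply cite rather than reprove; the rest is bookkeeping around the component decomposition and the Galois action of Frobenius on non-$\F_q$-rational components. The only subtle point worth verifying carefully is that the implicit constants in all the error terms depend only on $k$, $\ell$, and $r$, which follows from the uniform degree bounds on all auxiliary subvarieties (components, pairwise intersections, singular locus, hyperplane section at infinity) in terms of $k$ and $\ell$.
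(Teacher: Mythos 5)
Your argument is correct and is essentially the same as the proof the paper relies on: the lemma is not reproved here but cited from \cite[Lemma 5.2]{DR}, whose short proof is exactly this reduction --- decompose into geometrically irreducible components, apply classical Lang--Weil to the Frobenius-stable top-dimensional ones, and absorb everything else (lower-dimensional components, components moved by Frobenius via $V_i(\F_q)\subseteq V_i\cap V_i^{\mathrm{Frob}}$, pairwise intersections, the singular locus, and the hyperplane at infinity in the affine case) into the error term by B\'ezout-type degree bounds. The only point to make explicit is the convention that the degree of a reduced variety is the sum of the degrees of \emph{all} its irreducible components, so that the number of components and the degrees of all auxiliary loci are bounded in terms of $k$ and $\ell$ alone, as your final paragraph requires.
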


The second geometric lemma is a slight variation on \cite[Lemma 5.3]{DR}.

\begin{lemma}\label{lem:equiv}
Let $V \subseteq \bbP^\ell$ be a variety (reduced, but not necessarily irreducible) of dimension $r \ge 1$ defined over $\Z$, let $V^\ns$ be the nonsingular locus of $V$, and let $V^\ns_0$ be the Zariski open subset of $V^\ns$ obtained by imposing the conditions $x_i \ne 0$ for all $0 \le i \le \ell$. If $p$ is sufficiently large (with respect to $V$), the following are equivalent:
	\renewcommand{\theenumi}{\alph{enumi}}
	\begin{enumerate} 
	\item $V^\ns_0(\F_p) \ne \emptyset$.
	\item $V^\ns_0(\Z_p) \ne \emptyset$.
	\item At least one of the geometric components of $V$ is defined over $\Z_p$ and is not contained in a coordinate hyperplane $\{x_i=0\}$.
	\end{enumerate}
\end{lemma}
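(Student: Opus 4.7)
The plan is to establish $(a) \Leftrightarrow (b)$ via Hensel's lemma and $(a) \Leftrightarrow (c)$ via Lemma \ref{langweillem}. Throughout, the phrase ``$p$ sufficiently large'' will be used to invoke the following \emph{good reduction} properties of $V$ modulo $p$, which hold away from a finite set of primes by standard spreading-out (compare the analogous setup in \cite[Lemma 5.3]{DR}): (i) the geometric decomposition $V_{\bar{\Q}} = V_1 \cup \cdots \cup V_n$ into irreducible components spreads to an analogous decomposition $V_{\bar{\F}_p} = \bar{V}_1 \cup \cdots \cup \bar{V}_n$; (ii) each $V_i$ is defined over $\Z_p$ if and only if $\bar{V}_i$ is defined over $\F_p$; and (iii) the fiber $V^{\ns}_{\F_p}$ coincides with the nonsingular locus of $V_{\F_p}$.

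For $(a) \Leftrightarrow (b)$, given $x \in V^{\ns}_0(\F_p)$, the smoothness of $V$ at $x$ combined with Hensel's lemma lifts $x$ to some $\tilde{x} \in V^{\ns}(\Z_p)$ reducing to $x$; since each $x_i \in \F_p^\times$, each coordinate of $\tilde{x}$ is a unit in $\Z_p$, so $\tilde{x} \in V^{\ns}_0(\Z_p)$. Conversely, if $\tilde{x} \in V^{\ns}_0(\Z_p)$, then every $\tilde{x}_i$ is a $\Z_p$-unit, hence its reduction has every coordinate nonzero; by (iii) this reduction lies in the nonsingular locus of $V_{\F_p}$, and hence in $V^{\ns}_0(\F_p)$.

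For $(c) \Rightarrow (a)$, take a component $V_i$ defined over $\Z_p$ and not contained in any coordinate hyperplane. Its reduction $\bar{V}_i$ is then, by (i) and (ii), a geometrically irreducible component of $V_{\bar{\F}_p}$ defined over $\F_p$, of dimension $r$, and not contained in any coordinate hyperplane. Lemma \ref{langweillem} yields $|\bar{V}_i(\F_p)| = p^r + O(p^{r-1/2})$, while the complement of $V^{\ns}_0$ inside $\bar{V}_i$ is a finite union of proper closed subvarieties of $\bar{V}_i$ (namely $\Sing(\bar{V}_i)$, the intersections $\bar{V}_i \cap \bar{V}_j$ for $j \ne i$, and the slices $\bar{V}_i \cap \{x_j = 0\}$ for $0 \le j \le \ell$), each of which contributes only $O(p^{r-1})$ many $\F_p$-points. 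Subtracting, $|V^{\ns}_0(\F_p)| \ge p^r - O(p^{r-1/2}) > 0$ for $p$ large. Conversely, for $(a) \Rightarrow (c)$, a point $x \in V^{\ns}_0(\F_p)$ lies on a unique geometric component $\bar{V}_i$ of $V_{\bar{\F}_p}$ by nonsingularity; Frobenius fixes $x$ and so must fix the unique component through $x$, making $\bar{V}_i$ defined over $\F_p$ and hence, by (ii), $V_i$ defined over $\Z_p$. The fact that $x$ has nonvanishing coordinates rules out $\bar{V}_i$ (and thus $V_i$) being contained in any coordinate hyperplane.

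The main obstacle is the careful setup of the good-reduction dictionary (i)--(iii), in particular matching ``defined over $\Z_p$'' with ``defined over $\F_p$'' via the bijection between geometric components over $\bar{\Q}$ and over $\bar{\F}_p$; this is essentially identical to the analogous dictionary used in \cite[Lemma 5.3]{DR} and can be justified by restricting to the spreading-out of $V$ over $\textnormal{Spec}\, \Z[1/N]$ for a suitable integer $N$ depending only on $V$.
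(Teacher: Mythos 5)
Your proof is correct and follows essentially the same route as the paper, which simply cites \cite[Lemma 5.3]{DR} and adds the one new ingredient — that the points of the chosen component lying on a coordinate hyperplane number only $O(p^{r-1})$ by Lemma \ref{langweillem} — an ingredient your $(c)\Rightarrow(a)$ argument contains explicitly via the slices $\bar{V}_i\cap\{x_j=0\}$. The remaining steps (Hensel for $(a)\Leftrightarrow(b)$, Lang--Weil plus the Frobenius action on geometric components for $(a)\Leftrightarrow(c)$, all under a spreading-out/good-reduction dictionary) are exactly the content of \cite[Lemma 5.3]{DR} that the paper imports wholesale.
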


\begin{proof}
The proof is the same as for \cite[Lemma 5.3]{DR}, with one additional observation: in the context of showing that (c) implies (a), if $Z$ is an irreducible component of $V$ defined over $\Z_p$ not contained in a coordinate hyperplane, then the number of elements of $Z(\F_p)$ with at least one coordinate $0$ is at most $O_{k,\l,r}(p^{r-1})$ by Lemma \ref{langweillem}; applying Lemma \ref{langweillem} again, there are plenty of nonsingular points leftover, provided $p$ is sufficiently large.
\end{proof}

In the remainder of this section we prove Theorem~\ref{Pcrit}. 
We will use the following definition, which is modified from \cite[Definition 2.7]{DR}.

\begin{definition} For $\ell \in \N$ and $h\in \Z[x_1,\ldots,x_{\l}]$, we say that $h$ is \textit{smoothly $\P$-intersective} if there exists a choice  $\{\bsz_p\}_{p\in \mathcal{P}}$ of $p$-adic integer roots of $h$ such that $(\bsz_p)_i\not\equiv 0 \ (\text{mod }p)$ for all $1\leq i \leq \l$ and all $p$, and  $m_p=1$ for all but finitely many $p$. 
\end{definition}

Now, part (ii) of Theorem~\ref{Pcrit} is proven with an argument which is identical to that of \cite[Proposition 2.5]{DR}. The next proposition is item (iv) from Theorem~\ref{Pcrit}, and we note that the proof is essentially the same as for \cite[Proposition 2.8]{DR}, but using Lemma~\ref{lem:equiv} in place of \cite[Lemma 5.3]{DR}.

\begin{proposition}Suppose $\ell \ge 2$ and $h \in \Z[x_1,\ldots,x_{\l}]$ is Deligne and $\P$-intersective with $\deg(h)=k\geq 2$. If there exists a choice $\{\bsz_p\}_{p\in \P}$ of $p$-adic integer roots of $h$ satisfying $(z_p)_i \not\equiv 0 \ (\textnormal{mod }p)$ for all $1\leq i \leq \l$ and all $p$, and $m_p\in \{1,k\}$ for all but finitely many $p$, then $h$ is $\P$-Deligne. In particular, if $k=2$ or $h$ is smoothly $\P$-intersective, then $h$ is $\P$-Deligne. 
\end{proposition}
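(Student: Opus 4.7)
The plan is to verify the $\P$-Deligne condition directly for the given system $\{\bsz_p\}$: show that $\overline{h_d}$ (the reduction of $h_d$ modulo $p$) is Deligne for all $d\in\N$ and all $p$ outside a finite set $X$. I take $X$ to consist of all $p\le k$, all primes where $h^k$ fails to be smooth mod $p$ (finite because $h$ is Deligne over $\Q$), and the finite set where $m_p\notin\{1,k\}$. The main tool is the multivariate Taylor expansion
\begin{equation*}
h_d(\bsx) \;=\; \frac{1}{\lambda(d)}\,h(\bsr_d+d\bsx) \;=\; \sum_{\alpha}\frac{d^{|\alpha|}}{\lambda(d)\,\alpha!}\,\partial^{\alpha}h(\bsr_d)\,\bsx^{\alpha},
\end{equation*}
which lets me read off the top-degree part of $\overline{h_d}$ by a direct $p$-adic valuation calculation. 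Fix $p\notin X$, $d\in\N$, and write $d=p^{j}d'$ with $p\nmid d'$; since $p>k\ge|\alpha|$, each $\alpha!$ is a unit mod $p$, and $d'/\lambda(d')$ is a unit in $\Z_p$.

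The analysis splits into three cases. If $p\nmid d$, or if $j\ge 1$ with $m_p=k$, then $\lambda(d)=p^{jm_p}\lambda(d')$ gives $d^k/\lambda(d)=d'^k/\lambda(d')$, a unit mod $p$, so the degree-$k$ part of $\overline{h_d}$ is a nonzero scalar multiple of $h^k\pmod p$, which is smooth by choice of $X$. If instead $j\ge 1$ and $m_p=1$, then for every multi-index with $2\le|\alpha|\le k$,
\begin{equation*}
v_p\!\left(\frac{d^{|\alpha|}}{\lambda(d)\,\alpha!}\,\partial^{\alpha}h(\bsr_d)\right) \;\ge\; j|\alpha|-j \;=\; j(|\alpha|-1)\;\ge\;1,
\end{equation*}
so all degree-$\ge 2$ terms of $h_d$ vanish mod $p$. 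The degree-$1$ coefficient is $(d'/\lambda(d'))\,\nabla h(\bsr_d)$, which reduces modulo $p$ to $(d'/\lambda(d'))\,\nabla h(\bsz_p)$ (since $\bsr_d\equiv\bsz_p\pmod{p^j}$ with $j\ge 1$); the condition $m_p=1$ forces $\nabla h(\bsz_p)\not\equiv\bszero\pmod p$, so this linear form is nonzero. Hence $\overline{h_d}$ has total degree exactly $1$, and its degree-$1$ homogeneous part is a nonvanishing linear form, hence smooth.

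In all three cases the top-degree homogeneous part of $\overline{h_d}$ is smooth and its degree (either $k$ or $1$) is coprime to $p>k$, so $\overline{h_d}$ is Deligne, showing $h$ is $\P$-Deligne. The "in particular" clauses are then immediate: when $k=2$ we have $m_p\in\{1,2\}=\{1,k\}$ automatically for every $p$, and when $h$ is smoothly $\P$-intersective the definition directly provides the compatible system $\{\bsz_p\}$ with $m_p=1$ for cofinitely many $p$. I do not anticipate a serious obstacle; the one place where real care is required is the valuation bookkeeping in the $m_p=1$ subcase and the verification that $X$ absorbs every prime where smoothness of $h^k$, integrality of $h_d$, unit status of $d'/\lambda(d')$, or the hypothesis $m_p\in\{1,k\}$ could fail—each of which is a finite issue by assumption.
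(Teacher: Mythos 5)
Your Cases 1 and 2 (where $p\nmid d$, or $p\mid d$ with $m_p=k$) are correct, and the Taylor-expansion valuation bookkeeping is the right computation. The gap is in Case 3, at the assertion that ``the condition $m_p=1$ forces $\grad h(\bsz_p)\not\equiv\bszero\pmod p$.'' By definition, $m_p$ is the least order of a partial derivative of $h$ that is nonzero at $\bsz_p$ \emph{as an element of $\Z_p$}; so $m_p=1$ only gives $\grad h(\bsz_p)\neq\bszero$ in $\Z_p^{\l}$, and every component may still be divisible by $p$. In that event your own computation shows that for $p\mid d$ \emph{every} nonconstant coefficient of $h_d$ vanishes mod $p$, so $\overline{h_d}$ is a constant and is not Deligne. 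This is not a vacuous worry: if the affine hypersurface $\{h=0\}$ has a singular rational point with all coordinates nonzero and a split node there (e.g.\ $h(x,y)=(x-1)^3+(y-1)^3-3(x-1)(y-1)$, which is Deligne and $\P$-intersective), then for every large $p$ one can choose a root $\bsz_p\equiv(1,1)\pmod p$ lying on one branch, so that $m_p=1$ and the coordinates are units, yet $\grad h(\bsz_p)\equiv\bszero\pmod p$. This system satisfies all hypotheses of the proposition, and your direct verification fails for it.

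The repair is that the $\P$-Deligne condition only requires \emph{some} admissible system of roots to work, so one must be prepared to discard the given $\bsz_p$ at the problematic primes. For large $p$ with $m_p=1$, the point $\bsz_p$ shows that $\{h=0\}$ has a nonsingular point over $\Q_p$ with coordinates nonzero mod $p$; hence the component through it is defined over $\Z_p$ and not contained in a coordinate hyperplane, and Lemma \ref{lem:equiv} (condition (c) implies (a)) together with Hensel lifting produces a new root $\bsz_p'$ with $(z_p')_i\not\equiv 0\pmod p$ and $\grad h(\bsz_p')\not\equiv\bszero\pmod p$. With that replacement your Case 3 goes through verbatim. This root-swapping step, powered by the Lang--Weil estimates, is precisely why the paper routes the proof through Lemma \ref{lem:equiv} rather than verifying the Deligne condition for the originally given system.
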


\noindent It remains to show parts (i) and (iii) of Theorem \ref{Pcrit}, both of which are a consequence of the following sufficient condition for smooth $\P$-intersectivity, analogous to \cite[Corollary 5.4]{DR}. Note that the following proposition is precisely part (iii) from Theorem~\ref{Pcrit}.

\begin{proposition} \label{Zbar} Suppose $\ell \ge 2$ and $h \in \Z[x_1,\ldots,x_{\ell}]$ is Deligne and $\P$-intersective, and let $h=g_1\cdots g_n$ be an irreducible factorization of $h$ in $\bar{\Z}[x_1,\dots,x_{\l}]$. If, for all but finitely many $p\in \P$, there exists $1 \le i \le n$ such that $g_i$ has coefficients in $\Z_p$ and $x_j \nmid g_i$ for all $1 \le j \le \l$, then $h$ is smoothly $\P$-intersective, hence $\P$-Deligne.
\end{proposition}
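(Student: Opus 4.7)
The plan is to show $h$ is smoothly $\P$-intersective; the ``hence $\P$-Deligne'' conclusion then follows from the preceding proposition (part (iv) of Theorem \ref{Pcrit}). What must be produced is, for all but finitely many $p$, a root $\bsz_p \in \Z_p^{\l}$ of $h$ whose every coordinate is a unit mod $p$ and with multiplicity $m_p=1$; for the finitely many exceptional primes, the $\P$-intersectivity hypothesis already supplies some $p$-adic root with nonzero coordinates mod $p$, which is all that is required of those primes.

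Fix a large prime $p$ and, using the hypothesis, select an irreducible factor $g_i$ of $h$ in $\bar{\Z}[x_1,\dots,x_{\l}]$ that is defined over $\Z_p$ and is not divisible by any $x_j$. Let $V_i \subseteq \bbA^{\l}$ be the hypersurface cut out by $g_i$. Since $g_i$ is irreducible over $\bar{\Z}$, the variety $V_i$ is geometrically irreducible over $\Z_p$ of dimension $\l-1\geq 1$, and it is not contained in any coordinate hyperplane $\{x_j=0\}$. Moreover, for each $j\neq i$, since $g_i$ and $g_j$ are distinct irreducible factors of $h$ over $\bar{\Z}$, the intersection $V_i\cap V_j$ is a proper closed subvariety of $V_i$ and so has dimension at most $\l-2$.

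The first step is to locate an $\F_p$-point on $V_i$ simultaneously avoiding the singular locus, the coordinate hyperplanes, and all other factors $V_j$. Applied to $V_i$, the affine version of Lemma \ref{langweillem} gives $|V_i^{\ns}(\F_p)| = p^{\l-1} + O_h(p^{\l-3/2})$, while each of the finitely many subsets $V_i \cap \{x_j=0\}$ and $V_i \cap V_j$ (for $j\neq i$) has dimension at most $\l-2$ and hence contributes only $O_h(p^{\l-2})$ points in $\F_p^{\l}$. Consequently, for all sufficiently large $p$ there exists $\bar{\bsz}\in \F_p^{\l}$ with $g_i(\bar{\bsz})=0$, $\grad g_i(\bar{\bsz})\neq 0$, $(\bar{\bsz})_j\neq 0$ for all $1\leq j\leq \l$, and $g_j(\bar{\bsz})\neq 0$ for all $j\neq i$.

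The second step is Hensel's lemma: since $g_i \in \Z_p[x_1,\dots,x_{\l}]$ and $\bar{\bsz}$ is a simple $\F_p$-point of $V_i$, the point $\bar{\bsz}$ lifts to $\bsz_p \in \Z_p^{\l}$ with $g_i(\bsz_p)=0$ and $\bsz_p\equiv \bar{\bsz} \pmod p$. Then $h(\bsz_p)=0$, each $(z_p)_j$ is a unit in $\Z_p$, and by the product rule the gradient satisfies $\grad h(\bsz_p) \equiv \Bigl(\prod_{j\neq i}g_j(\bar{\bsz})\Bigr)\grad g_i(\bar{\bsz}) \not\equiv \bszero \pmod{p}$, so $m_p=1$. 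This provides the desired choice of roots, establishing smooth $\P$-intersectivity.

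The main obstacle is the bookkeeping in the first step: ensuring the chosen $\F_p$-point simultaneously dodges the singular locus of $V_i$, all $\l$ coordinate hyperplanes, and all the other irreducible factors $V_j$. Geometric irreducibility of $V_i$ (equivalently, irreducibility of $g_i$ over $\bar{\Z}$, which is precisely what the hypothesis provides) is exactly what forces each of these ``bad'' subsets to be a proper closed subvariety of $V_i$ and hence of strictly smaller dimension, so that Lang--Weil gives a clean main term $p^{\l-1}$ dwarfing the $O(p^{\l-2})$ correction; the argument would collapse if, say, a coordinate hyperplane happened to coincide with an entire component of $V_i$.
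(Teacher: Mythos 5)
Your argument is correct and is essentially the paper's: the paper simply invokes Lemma \ref{lem:equiv} (the implication (c) $\Rightarrow$ (b), whose proof is exactly the Lang--Weil count plus Hensel lifting you carry out), noting that the hypothesis rules out every $\Z_p$-rational component being a coordinate hyperplane, while you have inlined that lemma's proof for the single component $V_i$ and added the (equivalent) requirement of avoiding the other $V_j$. The only point glossed over in both treatments is that geometric irreducibility of $g_i$, squarefreeness of $h$, and non-containment in coordinate hyperplanes persist under reduction modulo $p$ for all but finitely many $p$, which is standard.
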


\begin{proof} Suppose $\l\geq 2$ and $h\in \Z[x_1,\dots,x_{\l}]$ satisfies the hypotheses of the proposition. Lemma \ref{lem:equiv} gives us exactly what we need for smooth $\P$-intersectivity---a nonsingular $\Z_p$ point, with all coordinates nonzero modulo $p$, for all but finitely many $p$---except in the following pathological scenario: for infinitely many primes $p$, every irreducible component of the variety $\{h=0\}$ defined over $\Z_p$ is in fact a coordinate hyperplane.
\end{proof}

For $\l \ge 3$, a reducible hypersurface in $\bbP^{\l-1}$ must be singular; thus, a Deligne polynomial in at least three variables must be geometrically irreducible. We therefore obtain the following consequence of Proposition~\ref{Zbar}, proving item (i) from---and thus completing the proof of---Theorem~\ref{Pcrit}.

\begin{corollary} If  $\l\geq 3$ and $h\in \Z[x_1,\dots,x_{\l}]$ is Deligne and $\P$-intersective, then $h$ is smoothly $\P$-intersective, hence $\P$-Deligne.
\end{corollary}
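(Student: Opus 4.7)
The plan is to derive this corollary directly from Proposition~\ref{Zbar}. Specifically, I would show that when $\l \geq 3$ a Deligne polynomial $h$ of degree $k$ is forced to be geometrically irreducible, so that the factorization $h = g_1\cdots g_n$ in $\bar{\Z}[x_1,\dots,x_{\l}]$ is trivial ($n=1$, $g_1=h$). Once this is in place, all remaining hypotheses of Proposition~\ref{Zbar} are nearly automatic, and the conclusion follows.

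The geometric input, which the authors have essentially flagged in the line preceding the statement, is the observation that a smooth hypersurface in $\bbP^{\l-1}$ with $\l-1 \geq 2$ is geometrically irreducible. I would argue as follows: since $h$ is Deligne, the top-degree homogeneous part $h^k$ defines a smooth projective hypersurface $X \subset \bbP^{\l-1}$; if $h^k = f_1 f_2$ were a non-trivial factorization over $\bar{\Q}$ with $\deg f_i \geq 1$, then every point of the intersection $\{f_1 = 0\} \cap \{f_2 = 0\}$ would be singular on $X$, and this intersection is non-empty because any two hypersurfaces in $\bbP^{\l-1}$ meet whenever $\l-1 \geq 2$ (the intersection has dimension at least $\l-3 \geq 0$). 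This contradicts smoothness, so $h^k$ is geometrically irreducible. Irreducibility then passes to $h$ itself: any non-trivial factorization $h = g_1 g_2$ with $\deg g_i \geq 1$ would, upon taking top-degree homogeneous parts, yield a non-trivial factorization of $h^k$, a contradiction.

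Having established that $h$ is its only irreducible factor, I would finish by applying Proposition~\ref{Zbar}: $h$ has coefficients in $\Z \subseteq \Z_p$ for every prime $p$, and because $h$ is irreducible of degree $k \geq 2$, no coordinate function $x_j$ can divide $h$ (otherwise $h = x_j \cdot (h/x_j)$ would be a non-trivial factorization). Both hypotheses of Proposition~\ref{Zbar} thus hold for every prime $p$, giving smooth $\P$-intersectivity and hence the $\P$-Deligne property. I expect the only step warranting any care to be the geometric-irreducibility argument in the middle paragraph; the rest is essentially bookkeeping.
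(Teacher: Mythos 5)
Your proposal is correct and follows the paper's own route: the paper likewise deduces the corollary from Proposition~\ref{Zbar} by noting that a reducible hypersurface in $\bbP^{\l-1}$ with $\l\geq 3$ must be singular, so that the Deligne condition forces geometric irreducibility of $h$ and the factorization is trivial. Your middle paragraph merely spells out the details (smooth implies irreducible via the nonempty intersection of components, and irreducibility of $h^k$ lifts to $h$ by taking top-degree parts) that the paper leaves implicit, and these details are accurate.
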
 

%
%

\section{Exponential Sum Estimates} \label{expest}

In this section, we import all objects and parameters from Section \ref{L2Isec}, introduced prior to and during the proof of Lemma \ref{L2I}. Further, for $q\in \N$ we let $$W_{d,q}(Y) = \left\{ \bsn\in \Z^{\l}: ((r_d)_i+dn_i,q)=1 \text{ for all }1\leq i \leq \l, \ \grad h_d(\bsn) \not\equiv \bszero \text{ mod } p^{\gamma_{d}(p)} \text{ for all } p\leq Y, p^{\gamma_d(p)}\mid q \right\}, $$
and for $\bss \in [q]^{\l}$ and a prime $p$ with $p^{\gamma_d(p)}\nmid q$, we let $j_{d,q,\bss}(p)$ denote the number of $\l$-tuples of congruence classes $\bsc$ modulo $p^{\gamma_d(p)}$ satisfying $\grad h_d(\bsc) \equiv \bszero  \pmod{p^{\gamma_d(p)}}$, $p\nmid (r_d)_i+dc_i$ for all $1\leq i \leq \l$, and $\bsc \equiv \bss \pmod{p^{\ord_p(q)}}$. We let $\epsilon_{d,q}(p)=0$ if $p\mid dq$ and $1$ otherwise. Finally, we let 
\begin{equation*}w_{d,q}(\bss)=\prod_{\substack{p\leq Y \\ p^{\gamma_d(p)}\nmid q}} \left(1- \frac{j_{d,q,\bss}(p)}{((p-\epsilon_{d,q}(p))p^{\gamma_{d}(p)-\ord_p(q)-1})^{\l}} \right). \end{equation*}

\subsection{Major arc estimate} A minor adaptation of the proof of Proposition \ref{sieve2}, with $t=c\sqrt{\log M}/\log Y$, gives the following estimates. For the remainder of the section, we let $E$ denote an error term of the form $M^{\l}\exp(-c\sqrt{\log M})$ for $\l \geq 2$, and $L\exp(-c\sqrt{\log M})$ for $\l=1$, for a constant $c=c(h)>0$, noting that $E$ can absorb terms of the form $\cQ^{O_h(1)}$. 

\begin{lemma}\label{smodq} Suppose $\l \geq 2$. For $0<x_1,\dots,x_{\l}\leq M$, $q\leq \cQ^{O_h(1)}$ and $\bss \in [q]^{\l}$, we have \begin{equation*}\sum_{\substack{\bsn \in B \\ \bsn \equiv \bss (\textnormal{mod }q)}} \nu_d(\bsn) = \left(\frac{\vphi(d)}{\vphi(qd)} \right)^{\l}\frac{w_{d,q}(\bss)}{w_d}\prod_{i=1}^{\l} \left(x_i-\frac{\chi((r_d)_i)x_i^{\rho}}{\rho d^{1-\rho}} \right)1_{W_{d,q}(Y)}(\bss) + O(E). \end{equation*} 
\end{lemma}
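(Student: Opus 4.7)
The plan is to adapt the inclusion--exclusion plus Siegel--Walfisz argument from the proof of Lemma~\ref{sieve2} to incorporate the additional progression condition $\bsn \equiv \bss \pmod{q}$. The key organizing idea is to split each sieve prime $p \leq Y$ into one of two categories: those with $p^{\gamma_d(p)} \mid q$, for which the residue class of $\bsn$ modulo $p^{\gamma_d(p)}$ is fixed by $\bss$, and those with $p^{\gamma_d(p)} \nmid q$, over which we still run a truncated inclusion--exclusion.

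For primes in the first category, both conditions appearing in the definition of $\nu_d$ (the local sieve condition $\grad h_d(\bsn) \not\equiv \bszero \pmod{p^{\gamma_d(p)}}$ and the coprimality $((r_d)_i + dn_i, p) = 1$) are determined entirely by $\bss$; their conjunction over all such $p$ is precisely the factor $1_{W_{d,q}(Y)}(\bss)$, and if this indicator vanishes the sum is $0$ identically (up to the error $O(E)$). For primes in the second category, I run the inclusion--exclusion truncated at depth $t = c\sqrt{\log M}/\log Y$, exactly as in Lemma~\ref{sieve2}. By the Chinese Remainder Theorem, for any $s$-tuple $p_1 < \cdots < p_s$ of category-two primes, the combined conditions $\bsn \equiv \bss \pmod{q}$ and $\grad h_d(\bsn) \equiv \bszero \pmod{p_i^{\gamma_d(p_i)}}$ for each $i$ pin $\bsn$ to residue classes modulo $qm$, where $m = p_1^{\gamma_d(p_1)} \cdots p_s^{\gamma_d(p_s)}$. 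Among these, the classes also satisfying $p_i \nmid (r_d)_j + dc_j$ for all $j$ are counted exactly by $\prod_{i=1}^s j_{d,q,\bss}(p_i)$ by construction.

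Applying Lemma~\ref{RS} to each of the resulting arithmetic progressions of modulus $qdm$ (which lies in the admissible range $(q_0\cQ)^{C_0}$ by the hypotheses $q \leq \cQ^{O_h(1)}$, $d/q_0 \leq \cQ$, and the bound $dY^{bt} \leq (q_0\cQ)^{C_0}$ forced by the choice of $t$), the main-term density $\varphi(qdm)^{-\l}\prod_i(dx_i - \chi((r_d)_i)(dx_i)^\rho/\rho)$ emerges with Siegel--Walfisz error absorbable by $E$. The $(\varphi(d)/d)^\l$ factor in $\nu_d$ combines with this to reshape the density into $(\varphi(d)/\varphi(qd))^\l$ times a product that still depends multiplicatively on the category-two sieve primes. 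Summing the resulting inclusion--exclusion alternating sum and recognizing it as an Euler product, one obtains the quotient $w_{d,q}(\bss)/w_d$: the $w_d^{-1}$ normalization in $\nu_d$ accounts for all primes $p \leq Y$, while the category-two inclusion--exclusion reassembles only the category-two factors of $w_{d,q}(\bss)$, with the category-one factors residing silently in $w_d$ together with the indicator $1_{W_{d,q}(Y)}(\bss)$.

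Finally, the tail of the truncated inclusion--exclusion and the accumulated Siegel--Walfisz errors are handled as in Lemma~\ref{sieve2}, with the choice $t = c\sqrt{\log M}/\log Y$ again balancing them; both fit inside $E$ because $\cQ^{O_h(1)}$ factors are already absorbable. The main obstacle is purely bookkeeping: one must verify that partial-power divisors of $q$ (primes $p$ with $p^{\ord_p(q)} \ge 1$ but $p^{\gamma_d(p)} \nmid q$) generate no spurious contributions, so that the Euler factors in $(\varphi(d)/\varphi(qd))^\l$, in $w_{d,q}(\bss)/w_d$, and in $1_{W_{d,q}(Y)}(\bss)$ partition the set of primes $p \leq Y$ cleanly according to the two-category split, reproducing exactly the structured main term claimed.
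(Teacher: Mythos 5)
Your proposal is correct and matches the paper's intent exactly: the paper proves Lemma \ref{smodq} only by asserting it is "a minor adaptation of the proof of Proposition \ref{sieve2}," and your two-category split of the sieve primes according to whether $p^{\gamma_d(p)}\mid q$, with the fixed-by-$\bss$ conditions producing the indicator $1_{W_{d,q}(Y)}(\bss)$ and the remaining primes handled by the truncated inclusion--exclusion and Lemma \ref{RS} applied to moduli $qdm$, is precisely that adaptation. The reassembly of the Euler factors into $(\vphi(d)/\vphi(qd))^{\l}\,w_{d,q}(\bss)/w_d$ and the error absorption into $E$ via $t=c\sqrt{\log M}/\log Y$ are as the paper intends.
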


\begin{lemma}\label{smodq1} Suppose $\l =1$. For $0<x\leq M$, $q\leq \cQ^{O_h(1)}$ and $s \in [q]$, we have \begin{equation*}\sum_{\substack{n \in B \\ n \equiv s (\textnormal{mod }q)}} \nu_d(n) = \frac{\vphi(d)}{\vphi(qd)}\frac{w_{d,q}(s)}{w_d}\left(\int_1^x h_d'(t)\left(1-\chi(r_d)(dt)^{\rho-1}\right)dt\right)1_{W_{d,q}(Y)}(s) + O(E). \end{equation*} 
\end{lemma}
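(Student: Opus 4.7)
The plan is to adapt the proof of Lemma \ref{sieve1} by adding the congruence constraint $n \equiv s \pmod{q}$ and tracking how the sieve conditions defining $W_d(Y)$ interact with it. First I would split the sieve inclusion-exclusion over the primes $p \le Y$ into two regimes: primes $p$ with $p^{\gamma_d(p)} \mid q$, and primes with $p^{\gamma_d(p)} \nmid q$. For a prime $p$ in the first regime, the congruence $n \equiv s \pmod{p^{\gamma_d(p)}}$ is already determined by $n \equiv s \pmod{q}$, so the conditions $p \nmid (r_d) + dn$ and $\grad h_d(n) \not\equiv \bszero \pmod{p^{\gamma_d(p)}}$ are forced to hold or fail according to whether $s \in W_{d,q}(Y)$; this produces the indicator $1_{W_{d,q}(Y)}(s)$ in the main term (and causes the sum to vanish otherwise). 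For a prime $p$ in the second regime, I would open the sieve condition by inclusion-exclusion, lifting to residues modulo $qp^{\gamma_d(p)}$ via the Chinese Remainder Theorem.

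Next, after truncating the inclusion-exclusion at level $t = c\sqrt{\log M}/\log Y$, each surviving term is a sum of the form $\sum_{p \equiv a \pmod{qm}} h_d'(p)\log p$ for various moduli $m = p_1^{\gamma_d(p_1)}\cdots p_r^{\gamma_d(p_r)}$ with $p_1 < \cdots < p_r \le Y$ from the second regime. Applying Lemma \ref{wprimes} to each such sum (noting that $q_0 \mid d$ divides $dqm$ and that $dqmY^{bt} \le (q_0\cQ)^{C_0}$ holds in our regime) yields the main term
\[
\frac{1}{\varphi(dqm)} \int_1^x h_d'(t)\bigl(1-\chi(r_d)(dt)^{\rho-1}\bigr)\,dt,
\]
with the desired Siegel--Walfisz-type error absorbed into $E$, using that any $\cQ^{O_h(1)}$ factor can be swallowed by the exponential savings in $E$.

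Then I would reassemble the truncated inclusion-exclusion. The factors $\varphi(d)/\varphi(dqm)$ produce $(\varphi(d)/\varphi(qd))$ times an Euler product over the second-regime primes; combining with the counts $j_{d,q,\bss}(p)$ over second-regime primes (noting that the congruence $\bsn \equiv \bss \pmod{p^{\ord_p(q)}}$ restricts which residue classes modulo $p^{\gamma_d(p)}$ are counted) and adding and subtracting the tail for $s > t$ yields the partial product of $(1 - j_{d,q,\bss}(p)/((p-\vepsilon_{d,q}(p))p^{\gamma_d(p)-\ord_p(q)-1}))$ over second-regime primes, which is $w_{d,q}(\bss)$. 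Dividing by $w_d$ gives the claimed ratio $w_{d,q}(s)/w_d$ in front of the main integral.

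The main obstacle is purely bookkeeping: handling the dichotomy between primes with $p^{\gamma_d(p)} \mid q$ (which collapse into $1_{W_{d,q}(Y)}(s)$) and primes with $p^{\gamma_d(p)} \nmid q$ (which contribute to $w_{d,q}(s)$), and verifying that the resulting partial Euler products combine in exactly this way. The error estimates for the truncation tail and the Siegel--Walfisz error follow verbatim from the proof of Lemma \ref{sieve1}, since $q \le \cQ^{O_h(1)}$ and $t$ is chosen so that both $E_1$ and $E_2$ are dominated by $E$. No new analytic input beyond Lemma \ref{wprimes} is required.
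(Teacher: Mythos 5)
Your proposal is correct and matches the paper's intended argument: the paper states only that Lemmas \ref{smodq} and \ref{smodq1} follow by a ``minor adaptation'' of the proof of Lemma \ref{sieve2} (incorporating Lemma \ref{wprimes} for $\l=1$ as in Lemma \ref{sieve1}), and your split of the primes $p\leq Y$ according to whether $p^{\gamma_d(p)}\mid q$, the application of Lemma \ref{wprimes} with modulus $dqm$, and the reassembly into $\vphi(d)/\vphi(qd)\cdot w_{d,q}(s)/w_d\cdot 1_{W_{d,q}(Y)}(s)$ is exactly that adaptation. The only cosmetic slip is writing the inner sums as $\sum h_d'(p)\log p$ rather than as $\Psi_h$ after the change of variables $p=r_d+dn$, but you correctly identify the relevant modulus and hypotheses, so nothing is missing.
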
  
 
\noindent We use Lemmas \ref{smodq} and \ref{smodq1}, together with partial summation, to get asymptotic formulas for $S(\alpha)$ near rationals with small denominator. 

\begin{lemma}[Multivariable Partial Summation, Lemma 7.4 \cite{DR}] \label{mps}

Suppose $\ell\in \N$ and $a:\N^{\ell}\to \C$. Suppose further that $b: \R^{\ell}\to \C$ is $C^{\ell}$. For any $X\geq 1$, we have \begin{align*}\sum_{\bsn \in [X]^{\ell}} a(\bsn)b(\bsn)&= A(X,\dots,X)b(X,\dots,X) \\ &+\sum_{i=1}^{\ell} (-1)^i\sum_{1\leq j_1<\cdots<j_i\leq \ell} \int_{[1,X]^{i}} A(\star)\frac{\partial^i b}{\partial x_{j_1}\cdots \partial x_{j_i}}(\star) \ dx_{j_1}\cdots dx_{j_i},\end{align*} where $$A(x_1,\dots,x_{\ell})=\sum_{\bsn \in [x_1]\times \cdots \times [x_{\ell}]} a(\bsn)$$ and $\star=(X,\dots, x_{j_1},\dots ,x_{j_{i}}, \dots, X),$ with $x_{j_1},\dots,x_{j_i}$ plugged into coordinate positions $j_1,\dots,j_i$ and all other coordinates evaluated at $X$.
\end{lemma}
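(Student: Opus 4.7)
The plan is to establish the identity by iteratively applying the fundamental theorem of calculus in each of the $\ell$ coordinates of $b$, and then interchanging the finite sum over $\bsn$ with the resulting integrals. More precisely, for a fixed $\bsn \in [X]^{\ell}$, I would begin from the one-variable identity $b(\ldots, n_i, \ldots) = b(\ldots, X, \ldots) - \int_{n_i}^X \frac{\partial b}{\partial x_i}(\ldots, x_i, \ldots)\, dx_i$ and substitute it successively in coordinates $i = 1, 2, \ldots, \ell$ to obtain the expansion
$$b(\bsn) = \sum_{S \subseteq \{1,\ldots,\ell\}} (-1)^{|S|} \int_{\prod_{j \in S}[n_j, X]} \frac{\partial^{|S|} b}{\prod_{j \in S} \partial x_j}(\star_S) \prod_{j \in S} dx_j,$$
where $\star_S$ denotes the point whose $j$-th coordinate equals $x_j$ for $j \in S$ and $X$ for $j \notin S$. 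This expansion is a pure calculus statement, most cleanly verified by induction on $\ell$, and relies only on the $C^{\ell}$ hypothesis on $b$.

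Next, I would multiply by $a(\bsn)$ and sum over $\bsn \in [X]^{\ell}$. Since the $\bsn$-sum is finite, one can freely interchange it with each integral; the key observation is that the integration domain $\prod_{j \in S}[n_j, X]$ implicitly enforces the constraint $n_j \le x_j$ for every $j \in S$. After the swap, the inner summation becomes
$$\sum_{\substack{\bsn \in [X]^{\ell} \\ n_j \le x_j \text{ for } j \in S}} a(\bsn) = A(\star_S),$$
exactly the quantity appearing in the statement. The $S = \emptyset$ contribution produces the boundary term $A(X, \ldots, X) b(X, \ldots, X)$, while grouping the remaining contributions by cardinality $|S| = i$ for $i = 1, \ldots, \ell$ and enumerating ordered subsets $j_1 < \cdots < j_i$ reproduces the claimed double sum.

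I do not expect any substantive obstacle; the entire argument is bookkeeping. The only care required is to track the signs $(-1)^{|S|}$ generated by iterated FTC and to verify that the interchange of sum and integral genuinely yields $A$ evaluated at the specific point $\star_S$ whose entries are $x_j$ for $j \in S$ and $X$ otherwise. An equivalent approach would be induction on $\ell$, applying the standard one-variable Abel summation identity to the outermost coordinate and invoking the inductive hypothesis on the inner $(\ell-1)$-dimensional sum; I would nonetheless prefer the direct subset-enumeration derivation, since it transparently produces the precise form of the right-hand side without requiring one to reorganize a recursively unfolded expression.
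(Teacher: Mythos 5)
Your argument is correct and complete: the iterated fundamental-theorem-of-calculus expansion of $b(\bsn)$ over subsets $S\subseteq\{1,\dots,\ell\}$, followed by interchanging the finite sum over $\bsn$ with the integrals so that the constraints $n_j\le x_j$ for $j\in S$ assemble into $A(\star)$, is precisely the standard proof of this identity. Note that the paper itself supplies no proof here, importing the lemma by citation as Lemma~7.4 of \cite{DR}; your subset-enumeration derivation (equivalently, induction on $\ell$ via one-variable Abel summation) is the expected argument, and the only points needing care --- the signs $(-1)^{|S|}$ and the identification of the inner sum with $A$ evaluated at the point with entries $x_j$ for $j\in S$ and $X$ otherwise --- are exactly the ones you flag and handle.
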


\noindent For the following two lemmas, let $J$ be the sum of the absolute value of all coefficients of $h_d$.

\begin{lemma}\label{Sasym} Suppose $\l \geq 2$. If $a,q\in \N$, $q\leq \cQ^{O_h(1)}$, and $\alpha=a/q+\beta$, then \begin{align*}S(\alpha) &=\left(\frac{\vphi(d)}{\vphi(qd)} \right)^{\l}w_d^{-1}\mathcal{G}(a,q)\int_{[1,M]^{\l}}\prod_{i=1}^{\l}\left(1-\chi((r_d)_i)(dx_i)^{\rho-1}\right)e(h_d(\bsx)\beta)d\bsx\\& + O_{h}\left(E(1+JM^{k}|\beta|)^{\ell}\right),\end{align*}
where $$\mathcal{G}(a,q)=\sum_{\boldsymbol{s}\in [q]^{\ell}\cap W_{d,q}(Y)}w_{d,q}(\bss)e(h_d(\bss)a/q).$$ 
\end{lemma}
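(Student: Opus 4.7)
The plan is a standard major-arc computation: partition the sum defining $S(\alpha)$ according to residue classes modulo $q$, apply Lemma \ref{smodq} to get an asymptotic for each resulting restricted partial sum, and convert these sums to integrals via the multivariable partial summation identity Lemma \ref{mps}.

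First, since $h_d$ has integer coefficients, $h_d(\bsn)\equiv h_d(\bss)\ (\text{mod }q)$ whenever $\bsn\equiv\bss\ (\text{mod }q)$, so
$$S(\alpha)=\sum_{\bss\in [q]^{\l}}e(h_d(\bss)a/q)\sum_{\substack{\bsn\in [M]^{\l}\\ \bsn\equiv\bss\,(\text{mod }q)}}\nu_d(\bsn)e(h_d(\bsn)\beta),$$
where the $W_d(Y)$ restriction is already encoded in $\nu_d$. To the inner sum I apply Lemma \ref{mps} with $a(\bsn)=\nu_d(\bsn)1_{\bsn\equiv\bss\,(\text{mod }q)}$ and $b(\bsx)=e(h_d(\bsx)\beta)$; by Lemma \ref{smodq}, the relevant box sums $A(\bsx)$ equal $F_{\bss}(\bsx)+O(E)$, where
$$F_{\bss}(\bsx)=\left(\frac{\vphi(d)}{\vphi(qd)}\right)^{\l}\frac{w_{d,q}(\bss)}{w_d}\prod_{i=1}^{\l}\left(x_i-\frac{\chi((r_d)_i)x_i^{\rho}}{\rho d^{1-\rho}}\right)1_{W_{d,q}(Y)}(\bss).$$

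Substituting $F_{\bss}$ for $A$ in the Lemma \ref{mps} identity recovers, by reverse multivariable integration by parts, the integral $\int_{[1,M]^{\l}}(\partial^{\l}F_{\bss}/\partial x_1\cdots\partial x_{\l})\cdot e(h_d(\bsx)\beta)\,d\bsx$. Since $\frac{\partial}{\partial x_i}(x_i-\chi((r_d)_i)x_i^{\rho}/(\rho d^{1-\rho}))=1-\chi((r_d)_i)(dx_i)^{\rho-1}$ (using $x_i^{\rho-1}/d^{1-\rho}=(dx_i)^{\rho-1}$), this matches the integrand in the claimed main term, and summing over $\bss$ collects the factor $\sum_{\bss\in[q]^{\l}\cap W_{d,q}(Y)}w_{d,q}(\bss)e(h_d(\bss)a/q)=\mathcal{G}(a,q)$. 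For the error, I would bound each mixed partial $\partial^i b/\partial x_{j_1}\cdots\partial x_{j_i}$ by $O((JM^{k-1}|\beta|)^i)$ via the chain rule (each differentiation brings down $2\pi i\beta\cdot\partial h_d/\partial x_{j_r}$), giving an $i$-th order contribution $O(E\cdot M^i\cdot (JM^{k-1}|\beta|)^i)=O(E(JM^k|\beta|)^i)$, which sums over $0\le i\le \l$ to $O(E(1+JM^k|\beta|)^{\l})$ per residue class.

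The main obstacle is ensuring the error survives the outer summation over the $q^{\l}$ residue classes $\bss$: since $q\leq \cQ^{O_h(1)}$, this extra $q^{\l}$ factor is itself of size $\cQ^{O_h(1)}$, which by the convention established for $E$ (that it absorbs any polynomial-in-$\cQ$ factors) leaves the final error at the advertised $O_h(E(1+JM^k|\beta|)^{\l})$. Everything else reduces to careful bookkeeping of constants depending on $h$.
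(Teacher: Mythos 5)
Your proposal is correct and takes essentially the same route as the paper: split $S(\alpha)$ into residue classes modulo $q$, evaluate the counting functions via Lemma \ref{smodq}, and convert to the integral form via multivariable Abel summation (Lemma \ref{mps}) and reverse integration by parts, with the $q^\ell$ factor from summing the per-class errors and the $(1+JM^k|\beta|)^\ell$ factor from the derivatives of $e(h_d(\bsx)\beta)$ handled exactly as you describe. The only cosmetic difference is that the paper applies Lemma \ref{mps} to the aggregate sum (defining $\mathcal{S}(\bsx)=\sum_{\bsn\in B}\nu_d(\bsn)e(h_d(\bsn)a/q)$ and evaluating it via Lemma \ref{smodq} over all residue classes at once), whereas you apply the partial-summation identity residue class by residue class and sum afterwards; this reordering is immaterial.
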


\begin{proof} We begin by noting that for any $a,q \in \N$ and $0\leq x_1,\dots,x_{\ell} \leq M$, letting $B=[x_1]\times\cdots\times [x_{\ell}]$, we have
\begin{align*}\mathcal{S}(x_1,\dots,x_{\ell}):&=\sum_{\bsn \in B}\nu_d(\bsn) e(h_d(\bsn)a/q)\\&=\sum_{\boldsymbol{s}\in [q]^{\ell}} e(h_d(\boldsymbol{s})a/q) \sum_{\substack{\bsn \in B \\ \bsn \equiv \bss \ (\text{mod }q)}} \nu_d(\bsn).
\end{align*}

\noindent  Lemma \ref{smodq} then gives \begin{equation} \label{Tx} \mathcal{S}(x_1,\dots,x_{\ell})=\left( \frac{\vphi(d)}{\vphi(qd)}\right)^{\l}w_d^{-1}\prod_{i=1}^{\l} \left(x_i-\frac{\chi((r_d)_i)x_i^{\rho}}{\rho d^{1-\rho}}\right)\sum_{\boldsymbol{s}\in [q]^{\ell}\cap W_{d,q}(Y)} w_{d,q}(\bss) e(h_d(\boldsymbol{s})a/q)+O(E). \end{equation}
Letting $b(\bsn)=e( h_d(\bsn) \beta)$, we now decompose our sum as $$S(\alpha)=\sum_{\bsn \in [M]^{\ell}}\nu_d(\bsn)e(h_d(\bsn)a/q) b(\bsn)$$ and apply Lemma \ref{mps}, yielding 
\begin{align*} S(\alpha)&=\mathcal{S}(M,\dots,M)b(M,\dots,M) \\ &+\sum_{m=1}^{\ell} (-1)^m\sum_{1\leq j_1<\cdots<j_m\leq \ell} \int_{[1,X]^{m}} \mathcal{S}(\star)\frac{\partial^m b}{\partial x_{j_1}\cdots \partial x_{j_m}}(\star) \ dx_{j_1}\cdots dx_{j_m},
\end{align*} 
where $\star$ is as in Lemma \ref{mps}. Substituting (\ref{Tx}) gives the main term 
\begin{align*}&\left( \frac{\vphi(d)}{\vphi(qd)}\right)^{\l}w_d^{-1}\sum_{\boldsymbol{s}\in [q]^{\ell}\cap W_{d,q}(Y)} w_{d,q}(\bss) e(h_d(\boldsymbol{s})a/q)\Big( \prod_{1\leq i \leq \l} \left(M-\frac{\chi((r_d)_{i})M^{\rho}}{\rho d^{1-\rho}}\right) b(M,\dots,M) \\ +& \sum_{m=1}^{\ell} (-1)^m\sum_{1\leq j_1<\cdots<j_m\leq \ell} \prod_{\substack{1\leq i \leq \l \\ i \neq j_1,\dots,j_m}} \left(M-\frac{\chi((r_d)_{i})M^{\rho}}{\rho d^{1-\rho}}  \right)  \\ \cdot & \int_{[1,M]^{m}} \prod_{1\leq i \leq m} \left(x_i-\frac{\chi((r_d)_{j_i})x_i^{\rho}}{\rho d^{1-\rho}}  \right)\frac{\partial^m b}{\partial x_{j_1}\cdots \partial x_{j_m}}(\star) \ dx_{j_1}\cdots dx_{j_m}\Big). \end{align*} 
By iteratively applying integration by parts, this equals 
\begin{equation*}\left(\frac{\vphi(d)}{\vphi(qd)} \right)^{\l}w_d^{-1}\sum_{\boldsymbol{s}\in [q]^{\ell}\cap W_{d,q}(Y)}w_{d,q}(\bss)e(h_d(\bss)a/q)\int_{[1,M]^{\l}}\prod_{i=1}^{\l}\left(1-\chi((r_d)_i)(dx)^{\rho-1}\right)e(h_d(\bsx)\beta)d\bsx, \end{equation*} 
as desired. The error that results from our substitution of (\ref{Tx}) consists of $2^{\l}$ terms of order $$O_{h}\left(E(1+JM^{k}|\beta|)^{\ell}\right),$$ completing the proof. \end{proof}

\noindent Analogous to the deduction of Lemma \ref{Sasym} from Lemma \ref{smodq}, the following asymptotic formula for $\l=1$ follows from Lemma \ref{smodq1}.

\begin{lemma}\label{1asym} Suppose $\l=1$. If $a,q\in \N$, $q\leq \cQ^{O_h(1)}$, and $\alpha=a/q+\beta$, then \begin{align*}S(\alpha) =\frac{\vphi(d)}{\vphi(qd)} w_d^{-1}\mathcal{G}(a,q)\int_1^M h_d'(x)\left(1-\chi(r_d)(dx)^{\rho-1}\right) e(h_d(x)\beta)dx+ O_{h}\left(E(1+JM^{k}|\beta|)\right).\end{align*} 
\end{lemma}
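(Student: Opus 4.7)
The plan is to mirror the proof of Lemma~\ref{Sasym}, replacing the multivariable partial summation (Lemma~\ref{mps}) with its one-variable analog (Abel summation), and replacing Lemma~\ref{smodq} with its $\ell=1$ counterpart, Lemma~\ref{smodq1}. The structure is essentially the same, just with far less indexing to manage.

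First, I factor $e(h_d(n)\alpha) = e(h_d(n)a/q)e(h_d(n)\beta)$ and observe that, since $h_d\in\Z[x]$, the character $e(h_d(n)a/q)$ depends only on $n \bmod q$. Partitioning by residue class,
\[
\mathcal{S}(x) := \sum_{n\leq x}\nu_d(n)e(h_d(n)a/q) = \sum_{s\in[q]}e(h_d(s)a/q)\sum_{\substack{n\leq x\\ n\equiv s\,(\mathrm{mod}\,q)}}\nu_d(n).
\]
Applying Lemma~\ref{smodq1} to the inner sum and recognizing the resulting coefficient as $\mathcal{G}(a,q)$ yields
\[
\mathcal{S}(x) = \frac{\vphi(d)}{\vphi(qd)}\,w_d^{-1}\,\mathcal{G}(a,q)\int_1^x h_d'(t)\bigl(1-\chi(r_d)(dt)^{\rho-1}\bigr)\,dt + O(E),
\]
where the accumulated $O(qE)$ across the $q$ classes collapses to $O(E)$ since $E$ was defined to absorb factors of the form $\cQ^{O_h(1)}\ge q$.

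Next, I apply Abel summation to $S(\alpha) = \sum_{n\leq M}\bigl(\nu_d(n)e(h_d(n)a/q)\bigr)\cdot b(n)$ with $b(x)=e(h_d(x)\beta)$, giving
\[
S(\alpha) = \mathcal{S}(M)b(M) - \int_1^M \mathcal{S}(x)\,b'(x)\,dx.
\]
Substituting the asymptotic for $\mathcal{S}$, the main-term contribution is
\[
\frac{\vphi(d)}{\vphi(qd)}w_d^{-1}\mathcal{G}(a,q)\left[I(M)b(M) - \int_1^M I(x)\,b'(x)\,dx\right],
\]
where $I(x):=\int_1^x h_d'(t)\bigl(1-\chi(r_d)(dt)^{\rho-1}\bigr)\,dt$. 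A single integration by parts with $u=I(x)$ and $dv=b'(x)\,dx$, noting $I(1)=0$, collapses the bracket to
\[
\int_1^M h_d'(x)\bigl(1-\chi(r_d)(dx)^{\rho-1}\bigr)e(h_d(x)\beta)\,dx,
\]
which is exactly the claimed main term.

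Finally, the $O(E)$ in the formula for $\mathcal{S}$ contributes $O(E)$ through the boundary term $\mathcal{S}(M)b(M)$, and $\int_1^M O(E)|b'(x)|\,dx \ll E \cdot J M^k |\beta|$ through the integral term, using $|b'(x)|\leq 2\pi|\beta||h_d'(x)|\ll J|\beta|M^{k-1}$. Adding these gives the stated error $O_h\bigl(E(1+JM^k|\beta|)\bigr)$. No real obstacle arises; the only point requiring care is verifying that the $O(qE)$ error in $\mathcal{S}(x)$ absorbs into $O(E)$, which is immediate from $q\leq \cQ^{O_h(1)}$ and the definition of $E$.
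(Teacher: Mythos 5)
Your proposal is correct and is exactly the argument the paper intends: the paper gives no separate proof of Lemma \ref{1asym}, stating only that it follows from Lemma \ref{smodq1} "analogous to the deduction of Lemma \ref{Sasym} from Lemma \ref{smodq}," and your write-up carries out precisely that one-variable analogue (residue-class decomposition, Abel summation in place of Lemma \ref{mps}, a single integration by parts, and the $E(1+JM^k|\beta|)$ error bookkeeping). The absorption of the $O(qE)$ error into $O(E)$ via $q\leq \cQ^{O_h(1)}$ is the same implicit step used in the paper's proof of Lemma \ref{Sasym}.
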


\subsection{Common divisors and gradient vanishing} \label{cdgv}

Here we collect facts assuring that several quantities depending a priori on $h_d$ can actually be bounded in terms of the original polynomial $h$, which we use implicitly in the remainder of Section \ref{expest}.

\begin{definition} For $g(\bsx)= \sum_{|\boldsymbol{i}|\leq k}a_{\boldsymbol{i}}\bsx^{\boldsymbol{i}}  \in \Z[x_1,\dots,x_{\l}]$, we define $\cont(g)=\gcd(\{a_{\boldsymbol{i}}: |\boldsymbol{i}|>0\}).$
\end{definition}

\begin{proposition}[Proposition 3.6, \cite{DR}] \label{idzero} If  $g(\bsx)= \sum_{|\boldsymbol{i}|\leq k}a_{\boldsymbol{i}}\bsx^{\boldsymbol{i}}  \in \Z[x_1,\dots,x_{\l}]$ is identically zero modulo $q\in \N$, then $q \mid k!\gcd(\{a_{\bsi}\}).$ In particular, if $\grad g$ is identically $\bszero$ modulo $q$, then $q \ll_k \textnormal{cont}(g)$.
\end{proposition}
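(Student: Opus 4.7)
The plan is to work in the multivariate integer-valued polynomial basis. Since the products $\prod_{i=1}^{\l} \binom{x_i}{j_i}$ with $|\boldsymbol{j}|\leq k$ form a $\Q$-basis for polynomials of total degree at most $k$, we can write
\begin{equation*}
g(\bsx) = \sum_{|\boldsymbol{j}|\leq k} b_{\boldsymbol{j}} \prod_{i=1}^{\l} \binom{x_i}{j_i}, \qquad b_{\boldsymbol{j}} \in \Q.
\end{equation*}
Iterated forward differences invert this change of basis: using $\Delta_i\binom{x_i}{m}=\binom{x_i}{m-1}$ and evaluating at the origin, one checks that $b_{\boldsymbol{j}} = (\Delta_1^{j_1}\cdots\Delta_{\l}^{j_{\l}} g)(\bszero)$. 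Because this is an integer linear combination of values of $g$ on $\Z^{\l}$, each of which lies in $q\Z$ by hypothesis, we conclude $q \mid b_{\boldsymbol{j}}$ for every $\boldsymbol{j}$.

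Writing $\binom{x_i}{j_i}=P_{j_i}(x_i)/j_i!$ with $P_j(x)=x(x-1)\cdots(x-j+1)\in\Z[x]$, and noting that $k!/\prod_i j_i!$ is an integer whenever $|\boldsymbol{j}|\leq k$ (by the integrality of multinomial coefficients combined with $|\boldsymbol{j}|!\mid k!$), we obtain
\begin{equation*}
k!\cdot g(\bsx) = \sum_{|\boldsymbol{j}|\leq k} \frac{k!}{\prod_i j_i!}\,b_{\boldsymbol{j}}\,\prod_{i=1}^{\l} P_{j_i}(x_i),
\end{equation*}
a polynomial all of whose monomial coefficients are divisible by $q$. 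Those coefficients are exactly $k!\,a_{\bsi}$, so $q\mid k!\,a_{\bsi}$ for every $\bsi$, proving the first claim.

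For the ``in particular'' statement, apply the first part to each partial derivative $\partial g/\partial x_j$, which is a polynomial of total degree at most $k-1$ identically zero mod $q$ on $\Z^{\l}$. This yields $q \mid (k-1)!\,i_j\,a_{\bsi}$ for every $\bsi\neq\bszero$ and every $j$ with $i_j\geq 1$. Taking the gcd of all these constraints gives $q \mid (k-1)!\gcd_{\bsi\neq\bszero}(a_{\bsi}\,g_{\bsi})$, where $g_{\bsi}:=\gcd_{j:\,i_j\geq 1}(i_j)\leq k$. Since each $g_{\bsi}$ divides $L:=\operatorname{lcm}(1,\ldots,k)$, each $a_{\bsi} g_{\bsi}$ divides $La_{\bsi}$, and therefore $\gcd_{\bsi\neq\bszero}(a_{\bsi} g_{\bsi})$ divides $L\cdot\cont(g)$; hence $q\mid(k-1)!\,L\,\cont(g)\ll_k\cont(g)$, as required.

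The argument is fundamentally a bookkeeping exercise; the only mildly nontrivial steps are the divisibility $\prod_i j_i!\mid k!$ when $|\boldsymbol{j}|\leq k$ (which enables the denominator-clearing in the first part) and the gcd/lcm manipulation used in the second part to pass from a gcd of scaled coefficients back to $\cont(g)$.
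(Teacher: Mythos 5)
Your proof is correct: the passage to the multivariate binomial basis via iterated forward differences, the integrality of $k!/\prod_i j_i!$ when $|\boldsymbol{j}|\le k$, and the gcd/lcm bookkeeping that converts the constraints $q\mid (k-1)!\,i_j a_{\bsi}$ into $q\ll_k \cont(g)$ all check out. The paper itself gives no proof here—it imports the proposition from \cite{DR}—and your finite-difference argument is the standard route to this statement (essentially the same as the cited one), so there is nothing substantive to compare.
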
 


\begin{proposition}[Proposition 6.5, \cite{DR}] \label{content} If $g\in \Z[x_1,\dots,x_{\l}]$ is strongly Deligne, then  $\textnormal{cont}(g_d) \ll_g 1. $
\end{proposition}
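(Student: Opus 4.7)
The strategy is to show that $\ord_p(\cont(g_d)) = 0$ for all but finitely many primes $p$ (uniformly in $d$) and is uniformly bounded in $d$ for the remaining finitely many primes, whence $\cont(g_d)\ll_g 1$. For $p\notin X(g)$, the strongly Deligne hypothesis delivers this immediately: $g_d$ modulo $p$ is Deligne, hence of positive degree modulo $p$, so some positive-degree coefficient of $g_d$ fails to be divisible by $p$, forcing $p\nmid \cont(g_d)$.

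For each of the finitely many $p\in X(g)$, I plan to work directly from the explicit coefficient formula $c_{\bsi}(d) = d^{|\bsi|}\partial^{\bsi} g(\bsr_d)/(\lambda(d)\,\bsi!)$ and the fact that $\ord_p(\lambda(d)) = m_p\cdot a$, where $a=\ord_p(d)$ and $m_p$ is the multiplicity of $\bsz_p$. This yields
\[
\ord_p(c_{\bsi}(d)) = (|\bsi|-m_p)\,a + \ord_p(\partial^{\bsi} g(\bsr_d)) - \ord_p(\bsi!).
\]
The key idea is to control $\cont(g_d)$ using two different families of coefficients depending on the size of $a$. Setting $\sigma_p := \min_{|\bsi|=m_p}\ord_p(\partial^{\bsi} g(\bsz_p))$, which is finite by definition of $m_p$, I pick $\bsi^*$ achieving this minimum. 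The congruence $\bsr_d\equiv\bsz_p\pmod{p^a}$ gives $\partial^{\bsi^*} g(\bsr_d)\equiv \partial^{\bsi^*} g(\bsz_p)\pmod{p^a}$, so whenever $a>\sigma_p$ the ultrametric inequality forces $\ord_p(\partial^{\bsi^*} g(\bsr_d))=\sigma_p$ and hence $\ord_p(c_{\bsi^*}(d))\leq \sigma_p$. In the complementary regime $a\leq \sigma_p$ (which subsumes $p\nmid d$), I instead use top-degree coefficients: for $|\bsi|=k$, $\partial^{\bsi} g$ is the constant $\bsi!\cdot a_{\bsi}$ with $a_{\bsi}$ a coefficient of $g^k$, so $\ord_p(c_{\bsi}(d)) = (k-m_p)a + \ord_p(a_{\bsi})$, minimized over $|\bsi|=k$ by $(k-m_p)a+\ord_p(\cont(g^k)) \leq (k-m_p)\sigma_p + \ord_p(\cont(g^k))$. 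Combining gives a uniform bound $\ord_p(\cont(g_d))\leq B_p$ depending only on $g$ and $p$, and the total $\prod_{p\in X(g)} p^{B_p}$ is the desired $g$-dependent constant.

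The main obstacle is exactly the small-$a$ regime. A naive attempt to use only the lowest-weight coefficient $c_{\bsi^*}(d)$ throughout breaks down because $\ord_p(\partial^{\bsi^*} g(\bsr_d))$ can in principle be arbitrarily large when $\bsr_d$ happens to be $p$-adically closer to some other zero of $\partial^{\bsi^*} g$ than the $p^a$-accuracy guaranteed by intersectivity. Switching to top-degree coefficients neatly sidesteps this, because their $\ord_p$ depends only on $\ord_p(d^k/\lambda(d))$ and the fixed coefficients of $g^k$, entirely independently of where $\bsr_d$ sits within its $p$-adic class.
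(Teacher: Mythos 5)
Your argument is correct. Note that the paper does not prove this proposition at all --- it is imported verbatim from \cite[Proposition 6.5]{DR} --- but your proof is complete and follows the natural two-step route one would expect there: for $p\notin X(g)$ the strongly Deligne hypothesis makes the reduction of $g_d$ nonconstant mod $p$, so $p\nmid\cont(g_d)$; for the finitely many $p\in X(g)$ one bounds $\ord_p$ of the Taylor coefficients $d^{|\bsi|}\partial^{\bsi}g(\bsr_d)/(\lambda(d)\,\bsi!)$ uniformly in $d$. Your case split on $a=\ord_p(d)$ versus $\sigma_p$ is exactly the right device: when $a>\sigma_p$ the congruence $\bsr_d\equiv\bsz_p\pmod{p^a}$ pins down $\ord_p(\partial^{\bsi^*}g(\bsr_d))=\sigma_p$, and when $a\le\sigma_p$ the top-degree coefficients $d^k a_{\bsi}/\lambda(d)$, whose valuation $(k-m_p)a+\ord_p(a_{\bsi})$ is independent of the position of $\bsr_d$ within its residue class, close the gap you correctly identify in the naive single-coefficient approach.
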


\begin{lemma}[Corollary 7.3, \cite{DR}] \label{gradcor} If $g\in \F_q[x_1,\dots,x_{\l}]$ is Deligne of degree $k\geq 1$, then $$|\{\bsx \in \F_q^{\l}: \grad g (\bsx) = \bszero|\ll_{k,\l} 1.$$
\end{lemma}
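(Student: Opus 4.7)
The plan is to show that the variety $V = \{\bsx \in \bar{\F}_q^{\l} : \grad g(\bsx) = \bszero\}$ is zero-dimensional, and then bound its size by B\'ezout. The only obstacle to zero-dimensionality would be a positive-dimensional component, which by taking projective closure must meet the hyperplane at infinity; I will rule this out using the Deligne hypothesis on the leading form.

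First, I would consider the projective closure $\bar{V} \subseteq \bbP^{\l}$ of $V$, obtained by homogenizing each partial derivative $\partial g/\partial x_i$ with a new variable $x_0$. Writing $g = g^k + g^{k-1} + \cdots + g^0$ where $g^j$ is the degree-$j$ homogeneous part, the leading (degree $k-1$) form of $\partial g/\partial x_i$ is exactly $\partial g^k / \partial x_i$. Consequently, the points of $\bar{V}$ at infinity ($x_0 = 0$) are precisely the points $[0 : a_1 : \cdots : a_\l] \in \bbP^{\l}$ with $(a_1,\dots,a_\l) \neq \bszero$ and $\partial g^k/\partial x_i(a_1,\dots,a_\l) = 0$ for all $i$.

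Next, I would invoke Euler's identity: since $g^k$ is homogeneous of degree $k$ and $\textnormal{char}(\F_q) \nmid k$ (by the Deligne hypothesis), we have $k \cdot g^k(\bsa) = \sum_i a_i \frac{\partial g^k}{\partial x_i}(\bsa)$, so any common zero of the $\partial g^k/\partial x_i$ is also a zero of $g^k$ itself. Any such nonzero $(a_1,\dots,a_\l)$ would therefore produce a singular point of the hypersurface $\{g^k=0\}$ in $\bbP^{\l-1}$, contradicting the smoothness of $g^k$ guaranteed by the Deligne condition. Hence $\bar{V}$ has no points at infinity, and so $V = \bar{V}$ is a closed subvariety of $\bbA^{\l}$ with no points in its closure at infinity; in particular, each irreducible component of $\bar{V}$ (equivalently of $V$) is a closed subvariety of $\bbA^{\l}$ that is complete, forcing $\dim V = 0$.

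Finally, with $V$ a zero-dimensional scheme cut out by $\l$ polynomials each of degree at most $k-1$, B\'ezout's theorem gives $|V(\bar{\F}_q)| \le (k-1)^{\l}$, and therefore
\[
|\{\bsx \in \F_q^{\l} : \grad g(\bsx) = \bszero\}| \le |V(\bar{\F}_q)| \le (k-1)^{\l} \ll_{k,\l} 1,
\]
as required. The only step that requires real care is the no-points-at-infinity argument, since it is where the Deligne hypothesis (both the smoothness of $g^k$ and $\textnormal{char}(\F_q) \nmid k$, needed for Euler's identity) is used essentially; the rest is formal.
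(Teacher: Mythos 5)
Your argument is correct and is essentially the proof given in the cited source \cite{DR} (the paper itself only imports this statement as Corollary 7.3 there): one shows the common zero locus of the partials has no points at infinity because, by Euler's identity and $\textnormal{char}(\F_q)\nmid k$, such a point would be a singular point of the smooth form $g^k$, and then concludes finiteness and the bound $(k-1)^{\l}$ by B\'ezout. The only imprecision is the word ``precisely'': the points at infinity of $\bar{V}$ are merely \emph{contained} in the common zero locus of the forms $\partial g^k/\partial x_i$ (and in positive characteristic some $\partial g/\partial x_i$ may have degree strictly less than $k-1$), but the containment is all your argument needs, so this does not affect correctness.
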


\subsection{Local cancellation} The primary purpose for defining the Deligne condition is the following estimate on multivariate exponential sums over finite fields, due to Deligne in his proof of the Weil conjectures.

\begin{lemma}[Theorem 8.4, \cite{Deligne}] \label{delmain} Suppose $\l \in \N$ and $p\in \P$. If $g\in \mathbb{F}_p[x_1,\dots,x_{\l}]$ is Deligne, then \begin{equation*} \left|\sum_{\bsx\in \mathbb{F}_p^{\l}} e(g(\bsx)/p) \right| \leq (\deg(g)-1)^{\l} p^{\l/2}. \end{equation*}
\end{lemma}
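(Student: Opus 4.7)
This is Deligne's celebrated theorem on exponential sums over finite fields, and my plan is to cite it as a black box rather than attempt an independent proof; nothing short of Deligne's proof of the Weil conjectures is known to yield the square-root cancellation stated here. Nevertheless, it is worth sketching the shape of the argument I would point to.

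First I would pick an auxiliary prime $\ell \neq p$ and a nontrivial additive character $\psi\colon \F_p \to \bar\Q_\ell^\times$ (concretely $\psi(x) = e(x/p)$), form the Artin--Schreier $\ell$-adic sheaf $\mathcal{L}_\psi$ on $\mathbb{A}^1_{\F_p}$, and pull it back along $g\colon \mathbb{A}^\l_{\F_p} \to \mathbb{A}^1_{\F_p}$ to obtain a lisse rank-one sheaf $g^*\mathcal{L}_\psi$ on $\mathbb{A}^\l_{\F_p}$. The Grothendieck--Lefschetz trace formula then identifies the exponential sum with
\[
\sum_{i=0}^{2\l}(-1)^i \operatorname{tr}\bigl(\operatorname{Frob}_p \mid H^i_c(\mathbb{A}^\l_{\bar\F_p}, g^*\mathcal{L}_\psi)\bigr).
\]

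Next I would exploit the Deligne hypothesis --- smoothness of the top-degree form $g^k$ together with $p \nmid k$ --- to show that the pencil $\{g^k = t\}$ behaves sufficiently tamely at infinity in $\mathbb{P}^{\l-1}$ that, after a suitable compactification, the compactly supported cohomology is concentrated in the middle degree $i = \l$, with $\dim H^\l_c \leq (k-1)^\l$. This bound is a standard Euler characteristic computation, via either the Grothendieck--Ogg--Shafarevich formula or a degeneration to a Koszul-type complex on a smooth degree-$k$ hypersurface. Finally, Deligne's purity theorem in \emph{La conjecture de Weil II} forces each Frobenius eigenvalue on $H^\l_c$ to have absolute value at most $p^{\l/2}$, and multiplying the dimension bound by this weight bound gives the asserted inequality.

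The main obstacle --- by an overwhelming margin --- is the purity statement itself, which is the deep content of Deligne's proof of the Weil conjectures and which I would have no hope of independently establishing. Everything else --- verifying cohomological concentration in middle degree and bounding the Betti number of $g^*\mathcal{L}_\psi$ on $\mathbb{A}^\l$ --- is standard once the Deligne hypotheses on $g$ are in hand, and is well-documented, for instance, in Katz's monographs.
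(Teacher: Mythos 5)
The paper gives no proof of this lemma: it is imported verbatim as Theorem 8.4 of Deligne's \emph{La conjecture de Weil I}, exactly as you propose to do, so your treatment matches the paper's. Your sketch of the underlying argument is accurate in outline (the only quibble being that the purity input for this particular estimate is already established in Weil I itself, where Theorem 8.4 appears, rather than requiring Weil II).
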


As first done in \cite{ricemax}, to effectively utilize \ref{delmain}, we must reduce to the case of prime moduli, which we do with a multivariable version of Hensel's Lemma that follows from  \cite[Theorem 1.1]{Conrad}.

\begin{lemma}[Multivariable Hensel's Lemma] \label{hensel} Suppose $\ell\in \N$, $g\in \Z[x_1,\dots,x_{\ell}]$, $p$ is prime, $\bsn \in \Z^{\l}$, and $\gamma,v\in \N$ with $v\geq 2\gamma-1$. If $g(\bsn)\equiv 0 \ (\textnormal{mod } p^{2\gamma-1})$ and $\grad g (\bsn) \not\equiv \bszero \ (\textnormal{mod } p^{\gamma})$, then there exists $\bsm\in \Z^{\l}$ with $g(\bsm) \equiv 0 \ (\textnormal{mod } p^v$).
\end{lemma}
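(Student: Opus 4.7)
The plan is to reduce to the single-variable Hensel's lemma by fixing all but one coordinate. Since $\grad g(\bsn) \not\equiv \bszero \pmod{p^{\gamma}}$, there exists some index $i$, without loss of generality $i = 1$, with $\frac{\partial g}{\partial x_1}(\bsn) \not\equiv 0 \pmod{p^{\gamma}}$; write $\tau = \ord_p\bigl(\frac{\partial g}{\partial x_1}(\bsn)\bigr)$, so $\tau \le \gamma - 1$.

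Next I would introduce the single-variable polynomial $f(y) = g(y, n_2, \dots, n_{\ell}) \in \Z[y]$. The hypotheses translate cleanly: $f(n_1) = g(\bsn) \equiv 0 \pmod{p^{2\gamma - 1}}$, while $\ord_p(f'(n_1)) = \ord_p\bigl(\frac{\partial g}{\partial x_1}(\bsn)\bigr) = \tau$. Since $2\gamma - 1 \ge 2\tau + 1 > 2\tau$, we have $|f(n_1)|_p < |f'(n_1)|_p^2$, which is precisely the hypothesis needed for the classical single-variable Hensel's lemma as formulated in \cite[Theorem 1.1]{Conrad}. That result produces $m_1^{\ast} \in \Z_p$ with $f(m_1^{\ast}) = 0$.

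Finally, for the prescribed $v \ge 2\gamma - 1$, let $m_1 \in \Z$ be any integer representative of $m_1^{\ast}$ modulo $p^v$. Then $f(m_1) \equiv f(m_1^{\ast}) = 0 \pmod{p^v}$, so setting $\bsm = (m_1, n_2, \dots, n_{\ell}) \in \Z^{\ell}$ yields $g(\bsm) = f(m_1) \equiv 0 \pmod{p^v}$, as required. There is no genuine obstacle in the argument; the only content to verify with care is the numerical inequality $2\gamma - 1 > 2\tau$, which encodes the asymmetry between the vanishing condition on $g$ at $\bsn$ and the non-vanishing condition on $\grad g$ at $\bsn$. One could alternatively bypass the appeal to \cite{Conrad} by running Newton's iteration directly on $f$ to produce a Cauchy sequence of lifts in $\Z$ converging $p$-adically to $m_1^{\ast}$, but the reduction to the univariate case keeps the exposition as short as possible.
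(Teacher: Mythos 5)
Your proof is correct and matches the paper's intended argument: the paper states this lemma without proof, saying only that it ``follows from \cite[Theorem 1.1]{Conrad}'', and the reduction you carry out --- freezing all coordinates but the one where $\frac{\partial g}{\partial x_1}(\bsn)\not\equiv 0 \pmod{p^{\gamma}}$, checking $\ord_p(f(n_1)) \ge 2\gamma-1 > 2\tau = \ord_p(f'(n_1)^2)$, and lifting the resulting $\Z_p$-root to an integer modulo $p^v$ --- is exactly that deduction. The only content is the inequality $2\gamma-1>2\tau$ coming from $\tau\le\gamma-1$, and you have it right.
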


\noindent Equipped with Lemmas \ref{delmain} and \ref{hensel}, we establish the following estimate on the local exponential sums that appear in our asymptotic formulas.

\begin{lemma}\label{locgen} If $q\in \N$ has prime factorization $q=p_1^{v_1}\cdots p_r^{v_r}$ with $p_1<\cdots< p_t\leq Y < p_{t+1}< \cdots < p_r$, and $(a,q)=1$, then \begin{align*}\left| \mathcal{G}(a,q) \right| &\leq C_1 \prod_{\substack{p\leq Y \\ p \nmid q}} \left(1- \frac{j_{d}(p)}{((p-\epsilon_{d}(p))p^{\gamma_{d}(p)-1})^{\l}} \right) \\ & \cdot \prod_{i=1}^t \left((k-1)^{\l}p_i^{\l/2} +\mathbbm{1}_{\l=1}+(2p-1)\mathbbm{1}_{\l=2}+ [(k-1)p_i^{\l-3/2} + ((k-2)\l+2^{\l}) p_i^{\l-2}]\mathbbm{1}_{\l \geq 3} + j_d(p_i)\right) \\ & \cdot \prod_{i=t+1}^r C_2(v_i+1)^{\ell} p_i^{v_i(\l-1/k)}, \end{align*} where $C_1=C_1(h)$ and $C_2=C_2(h)$. Further, $\mathcal{G}(a,q)=0$ if $v_i\geq 2\gamma_d(p_i)$ for some $1\leq i \leq t$.
\end{lemma}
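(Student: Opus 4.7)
The plan is to factor $\mathcal{G}(a,q)$ by the Chinese Remainder Theorem into a product of local sums over prime powers $p_i^{v_i}\mid q$, then estimate each local factor separately based on whether $p_i\leq Y$ or $p_i>Y$ and on the size of $v_i$ compared with $\gamma_d(p_i)$. The weight $w_{d,q}(\bss)$ factors across primes because each $j_{d,q,\bss}(p)$ depends on $\bss$ only through its residue modulo $p^{\ord_p(q)}$; in particular, the factors corresponding to $p\leq Y$ with $p\nmid q$ are $\bss$-independent and assemble exactly into the leading product $\prod_{p\leq Y,\,p\nmid q}(1-j_d(p)/((p-\epsilon_d(p))p^{\gamma_d(p)-1})^{\l})$, which comes out of the sum. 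Combined with the CRT decomposition of $e(h_d(\bss)a/q)$ across prime powers, this reduces matters to bounding, for each $p_i\mid q$, a single local sum over $[p_i^{v_i}]^{\l}$.

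For the vanishing statement, suppose $p_i\leq Y$ and $v_i\geq 2\gamma_d(p_i)$. Parametrize $\bss=\bss_0+p_i^{v_i-\gamma_d(p_i)}\bst$ with $\bss_0$ fixed modulo $p_i^{v_i-\gamma_d(p_i)}$ and $\bst$ free modulo $p_i^{\gamma_d(p_i)}$. Taylor expansion gives
\[
h_d(\bss)\equiv h_d(\bss_0)+p_i^{v_i-\gamma_d(p_i)}\grad h_d(\bss_0)\cdot \bst\pmod{p_i^{v_i}},
\]
since quadratic and higher terms carry a factor of $p_i^{2(v_i-\gamma_d(p_i))}\geq p_i^{v_i}$. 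Summing $e(h_d(\bss)a/p_i^{v_i})$ over $\bst$ yields a multiplicative character in $\bst$ that vanishes unless $\grad h_d(\bss_0)\equiv\bszero\pmod{p_i^{\gamma_d(p_i)}}$, which is precisely what the $W_{d,q}(Y)$ constraint forbids; hence $\mathcal{G}(a,q)=0$.

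For $p_i\leq Y$ with $v_i<2\gamma_d(p_i)$ (so $v_i\ll_h 1$), the modulus $p_i^{v_i}$ is bounded in terms of $h$, and the local sum is estimated by a direct count: Deligne's theorem (Lemma \ref{delmain}) supplies the main bound $(k-1)^{\l}p_i^{\l/2}$ on $\F_{p_i}^{\l}$ via Proposition \ref{content} (which uniformly bounds $\cont(h_d)$ so that $h_d$ remains Deligne modulo $p_i$ outside finitely many primes absorbed into $C_1$), the terms $\mathbbm{1}_{\l=1}$, $(2p-1)\mathbbm{1}_{\l=2}$, $(k-1)p^{\l-3/2}$, and $((k-2)\l+2^{\l})p^{\l-2}$ account for Lang--Weil-type corrections coming from points on the coordinate hyperplanes $\{x_i=0\}$ excluded by the coprimality condition in $W_{d,q}(Y)$, and $j_d(p_i)$ accounts for the excluded gradient-vanishing points (whose count is bounded by Gauss sum crudeness plus Lemma \ref{gradcor}). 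For $p_i>Y$, there is no $W_{d,q}(Y)$ constraint; we apply the standard Deligne-plus-Hensel reduction, using the multivariate Hensel lemma (Lemma \ref{hensel}) to lift solutions and group contributions by gradient order, combined with Lemma \ref{delmain} at the prime level, yielding the Weyl-type bound $C_2(v_i+1)^{\l}p_i^{v_i(\l-1/k)}$.

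The main obstacle lies in the estimate of the third paragraph: carefully extracting the Deligne main term on $\F_{p_i}$ for small $p_i$ while separately bounding the correction sums on each coordinate-hyperplane intersection and on the singular/gradient-vanishing locus, with distinct tight shapes for $\l=1,2,\geq 3$. This parallels the analogous estimate in \cite{DR}, with the critical adjustment that the singular locus now also incorporates the coprimality conditions $((r_d)_i+dn_i,q)=1$ inherited from the restriction to prime inputs; the bookkeeping must interact correctly with the Lang--Weil bounds of Lemma \ref{langweillem} applied to the hypersurface $\{h_d=0\}$ and its intersections with coordinate hyperplanes, all of which are controlled uniformly in $d$ by the (strongly) Deligne hypothesis on $h$.
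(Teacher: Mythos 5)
Your overall architecture matches the paper's: CRT factorization of $\mathcal{G}(a,q)$, extraction of the $\bss$-independent factors for $p\leq Y$, $p\nmid q$, and a case analysis on $p_i\leq Y$ versus $p_i>Y$ and on $v_i$ versus $2\gamma_d(p_i)$. Your proof of the vanishing statement is a correct alternative to the paper's: you use the Taylor/Weyl-differencing identity $h_d(\bss_0+p^{v-\gamma}\bst)\equiv h_d(\bss_0)+p^{v-\gamma}\grad h_d(\bss_0)\cdot\bst \pmod{p^v}$ (valid since $2(v-\gamma)\geq v$ and since $v-\gamma\geq\gamma$ makes the sieve and coprimality constraints depend only on $\bss_0$), whereas the paper instead uses the multivariable Hensel lemma to show the value map is a bijection and then sums roots of unity. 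Both work; yours is arguably more direct.

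However, the two main estimation steps have genuine problems. First, for $p_i\leq Y$ with $v_i<2\gamma_d(p_i)$ you assert that ``the modulus $p_i^{v_i}$ is bounded in terms of $h$''; this is false in the main case $\gamma_d(p_i)=v_i=1$, where $p_i$ ranges up to $Y=\eta^{-2k}$ (only the primes with $\gamma_d(p)>1$ are $O_h(1)$ in number and get absorbed into $C_1$). More importantly, you attribute the correction terms $(k-1)p^{\l-3/2}$ and $((k-2)\l+2^{\l})p^{\l-2}$ to ``Lang--Weil-type'' point counts on coordinate hyperplanes. They do not arise that way: the excluded hyperplanes are $\{s_i\equiv -d^{-1}(r_d)_i\}$, and since $h_d$ restricted to such a hyperplane need not be Deligne, the paper obtains $p^{\l-3/2}$ by fibering the hyperplane into lines and applying the \emph{single-variable} complete exponential sum bound $(k-1)p^{1/2}$ on all but $O(\l p^{\l-3})$ of them (where the content degenerates). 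Lemma \ref{langweillem} plays no role in this lemma, and a pure point count cannot produce a power saving of $p^{1/2}$ on an exponential sum. Second, for $p_i>Y$ your proposed ``Deligne-plus-Hensel reduction'' is not available: the sieve removes gradient-vanishing points only for $p\leq Y$, so for $p>Y$ there is no gradient non-vanishing hypothesis to feed into Lemma \ref{hensel}, and a Deligne/stationary-phase argument would in any case yield a bound of shape $p^{v\l/2+\cdots}$, not $p^{v(\l-1/k)}$. The exponent $\l-1/k$ is the signature of the paper's actual argument: trivial bounds in $\l-1$ variables combined with the Chen--Hua one-variable complete sum estimate $p^{v(1-1/k)}\gcd(\cont(g_{\tilde{\bss}}),p^v)^{1/k}$, a subtraction to enforce the coprimality condition $p\nmid (r_d)_1+dx$ (which you do not address), and a divisor-counting argument over $\tilde{\bss}$ to control the content. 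As written, your third and fourth paragraphs name methods that would not produce the stated bounds.
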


\begin{proof} Factor $q=p_1^{v_1}\cdots p_r^{v_r}$ as in the lemma. By the Chinese Remainder Theorem, we have \begin{equation*} \mathcal{G}(a,q)= \prod_{\substack{p\leq Y \\ p \nmid q}} \left(1- \frac{j_{d}(p)}{((p-\epsilon_{d}(p))p^{\gamma_{d}(p)-1})^{\l}} \right)\prod_{m=1}^{r} \tilde{\mathcal{G}}(a,p_m^{v_m}), \end{equation*} 
where 
$$ \tilde{\mathcal{G}}(a,p_m^{v_m}) = \sum_{\bss \in [p_m^{v_m}]^{\l}\cap W_{d,p_m^{v_m}}(Y)} e(ah_d(\bss)/p_m^{v_m}) \cdot \begin{cases} c(\bss) & \text{if } m\leq t, \ v_m < \gamma_d(p_m) \\ 1 & \text{else} \end{cases},$$ 
and $|c(\bss)|\leq 1$.

\noindent Suppose $p^v=p_m^{v_m}$ with $\gamma_d(p)>1$ and $v<2\gamma_d(p)$. Since $p^{2\gamma_d(p)-1}\leq p^{3(\gamma_d(p)-1)}$, we can trivially bound the contributions from all such $p^{v}$ by the cube of the product of prime powers $p^{\gamma_d(p)}$ for which $\gamma_d(p)>1$, which is $O_h(1)$, and absorb them into $C_1=C_1(h)$. 

\noindent Next suppose $p^v=p_m^{v_m}$ with $p\leq Y$ and $v=\gamma_d(p)=1$. If $h_d$ is not Deligne modulo $p$, absorb $\tilde{\mathcal{G}}(a,p)$ into $C_1$. Otherwise, recalling that $j_d(p)$ is the number of zeros of $\grad h_d$ modulo $p$ on $J_d(p)$ we have for $p\nmid a$ that  
\begin{equation} \label{expie1} \left|\sum_{\bss \in [p]^{\ell}\cap W_{d,p}(Y)}e(ah_d(\bss)/p)\right| \leq \left|\sum_{\substack{\bss \in [p]^{\ell} \\ p \nmid (r_d)_i+ds_i  \text{ for all } 1\leq i \leq \l}}e(ah_d(\bss)/p) \right|+j_d(p). \end{equation} 
If $p\mid d$, the sum on the right hand side is complete and bounded by $(k-1)^{\l}p^{\l/2}$ by Theorem \ref{delmain}. If $p \nmid d$, let $m_i= -d^{-1}(r_d)_i \ (\text{mod }p)$ for $1\leq i \leq \l$, so the sum on the right hand side above is 
\begin{equation}\label{expie2} \sum_{j=0}^{\l} (-1)^j \sum_{1\leq i_1<\cdots<i_{j}\leq \l} \sum_{\substack{\bss \in [p]^{\ell} \\ s_{i_n} = m_{i_n} \ \text{for all }1\leq n \leq j}}e(ah_d(\bss)/p)  .\end{equation}
For $j=0$, we have a complete sum, which is bounded by $(k-1)^{\l}p^{\l/2}$ by Theorem \ref{delmain}. If $\l=1$, we are removing only a single term, while if $\l = 2$ we are removing $2p-1$ terms. 

\noindent Now suppose $\l\geq 3$. For $j=1$, by geometric irreducibility, we know that $h_d$ is nonconstant modulo $p$ on each of the hyperplanes $x_i=m_i$. Let $g_i(x_1,\dots,x_{i-1},x_{i+1},\dots,x_{\l})=h_d(x_1,\dots,x_{i-1},m_i,x_{i+1},\dots,x_{\l})$ for each $1\leq i \leq \l$. Since we cannot guarantee that $g_i$ is a Deligne polynomial in $\l-1$ variables, we  only use that it is nonconstant and exploit cancellation in a single variable. For each $\tilde{\bss}\in [p]^{\l-2}$, we let $g_{i,\tilde{\bss}}(x)=g_i(x,\tilde{\bss})$ for $i>1$ and $g_{1,\tilde{\bss}}(x)=g_1(\tilde{\bss},x)$. Then, we have  
\begin{align*}\left|\sum_{\substack{\bss \in [p]^{\ell} \\ s_{i} = m_{i} }}e(ah_d(\bss)/p)\right|&= \left|\sum_{\bss \in [p]^{\ell-1}}e(ag_i(\bss)/p)\right|\\ & = \left|\sum_{\tilde{\bss} \in [p]^{\ell-2}}\sum_{x\in [p]}e(ag_{i,\tilde{\bss}}(x)/p)\right| \\& \leq (k-1)p^{1/2}\sum_{\tilde{\bss} \in [p]^{\ell-2}} \gcd(\text{cont}(g_{i,\tilde{\bss}}),p)^{1/2}.
\end{align*}
Consider a coefficient of $g_i$ not divisible by $p$, with a positive exponent on $x_1$ (or $x_{\l}$ if $i=1$, and if no such coefficient exists relabel the coordinates). Then, $\gcd(\text{cont}(g_{i,\tilde{\bss}}),p)=p$ only if $p$ divides the product of the coordinates of $\tilde{\bss}$, which occurs for fewer than $\l p^{\l-3}$ choices. Putting everything together, we have 
\begin{equation*}\left|\sum_{\substack{\bss \in [p]^{\ell} \\ s_{i} = m_{i} }}e(ah_d(\bss)/p)\right| \leq  (k-1)(p^{\l-3/2}+ \l p^{\l-2}),
\end{equation*}
and hence by \eqref{expie1} and \eqref{expie2}, trivially bounding the terms with $j>1$, we know 
\begin{equation*} \left|\sum_{\bss \in [p]^{\ell}\cap W_{d,p}(Y)}e(ah_d(\bss)/p)\right| \leq (k-1)^{\l}p^{\l/2} + (k-1)p^{\l-3/2} + ((k-2)\l+2^{\l}) p^{\l-2} + j_d(p). \end{equation*}
 
\noindent Now suppose that $p^v=p_m^{v_m}$ with $p\leq Y$ and $v\geq 2\gamma_d(p)$, and let $w=2\gamma_d(p)-1$. If $\bss \in [p^v]^{\ell}$ and $\tilde{\bss}$ is the reduced residue class of $\bss$ modulo $p^{w}$, then $h_d(\bss)\equiv p^{w}t+h_d(\tilde{\bss}) \ (\text{mod }p^v)$ for some $0\leq t\leq p^{v-w}-1$. Conversely,  if $\tilde{\bss}\in [p^w]^{\ell}$ with  $\grad h_d(\tilde{\bss})\not\equiv \bszero \  (\text{mod }p^{\gamma_d(p)})$, then for every $0\leq t \leq p^{v-w}-1$, Lemma \ref{hensel} applied to the polynomial $h_d(\bsx)-(p^{w}t+h_d(\tilde{\bss}))$ yields $\bss \in [p^v]^{\ell}$ with $h_d(\bss)\equiv p^{w}t+h_d(\tilde{\bss}) \ (\text{mod }p^v)$.
 
\noindent In other words, the map $F$ on $\Z/p^{v-w}\Z$ defined by $h_d(p^{w}t+\tilde{\bss})\equiv p^{w}F(t)+h_d(\tilde{\bss}) \ (\text{mod }p^v)$ is a bijection.  In particular,  
\begin{align*}\sum_{\bss\in [p^v]^{\l} \cap W_{d,p^v}(Y)}e(ah_d(\bss)/p^v) &= \sum_{\substack{\tilde{\bss}\in [p^w]^{\ell} \\ \grad h_d(\tilde{\bss})\not\equiv \bszero \ (\text{mod }p^{\gamma_d(p)}) \\ p\nmid (r_d)_i+d\tilde{s}_i \text{ for } 1\leq i \leq \l}} \sum_{t=0}^{p^{v-w}-1}e(a h_d(p^{w}t+\tilde{\bss})/p^v)\\  &=\sum_{\substack{\tilde{\bss}\in [p^w]^{\ell} \\ \grad h_d(\tilde{\bss})\not\equiv \bszero \ (\text{mod }p^{\gamma_d(p)})\\ p\nmid (r_d)_i+d\tilde{s}_i \text{ for } 1\leq i \leq \l}} \sum_{t=0}^{p^{v-w}-1}e(a\left(p^{w}t+h_d(\tilde{\bss})\right)/p^v) \\ &=0,\end{align*} 
where the last equality is the fact that the sum in $t$ runs over the full collection of $p^{v-w}$-th roots of unity. 

\noindent Finally, suppose $p^v=p_m^{v_m}$ with $p> Y$, so there is no longer a gradient nonvanishing condition. Similar to the $j=1$ case of \eqref{expie2}, we only exploit cancellation in a single variable. 

\noindent To this end, for  $\tilde{\bss} =(s_2,\dots,s_{\l})\in [p^v]^{\ell-1}$, we define $g_{\tilde{\bss}}$ by $g_{\tilde{\bss}}(x)=h_d(x,\tilde{\bss})$, and we have   
\begin{align*}\left|\sum_{\substack{\bss\in[p^v]^{\ell} \\ p\nmid (r_d)_i+d\tilde{s}_i \text{ for } 1\leq i \leq \l}}e(ah_d(\bss)/p^v)\right| &\leq  \sum_{\tilde{\bss}\in [p^v]^{\ell-1}} \left| \sum_{\substack{x \in [p^v] \\ p\nmid (r_d)_1+dx}} e(ag_{\tilde{\bss}}(x)/p^v) \right| \\ &= \sum_{\tilde{\bss}\in [p^v]^{\ell-1}} \left| \sum_{x \in [p^v]} e(ag_{\tilde{\bss}}(x)/p^v) - \sum_{y \in [p^{v-1}]} e(a\bar{g_{\tilde{\bss}}}(y)/p^{v-1})\right|, \end{align*} 
where $m \equiv -d^{-1}(r_d)_1 \ (\text{mod }p)$, $\bar{g_{\tilde{\bss}}}(y)=(g_{\tilde{\bss}}(m+py)-g_{\tilde{\bss}}(m))/p$, and the second inner sum is only present when $p\nmid d$. By the  standard single-variable complete sum estimate (see \cite{Chen} for example), the first inner sum is bounded by $p^{v(1-1/k)}\gcd(\text{cont}(g_{\tilde{\bss}}),p^v)^{1/k}$. Further, the second inner sum is bounded by $p^{(v-1)(1-1/k)}\gcd(\text{cont}(\bar{g_{\tilde{\bss}}}),p^{v-1})^{1/k}$, and since this term is only present when $p\nmid d$, we know in this case that $\gcd(\text{cont}(\bar{g_{\tilde{\bss}}}),p^{v-1})\ll_h p^{k-1}$. In any case, we have 
\begin{equation*}\left|\sum_{\substack{\bss\in[p^v]^{\ell} \\ p\nmid (r_d)_i+d\tilde{s}_i \text{ for } 1\leq i \leq \l}}e(ah_d(\bss)/p^v)\right| \ll_h p^{v(1-1/k)} \sum_{\tilde{s} \in [p^v]^{\l-1}} \gcd(\text{cont}(g_{\tilde{\bss}}),p^v)^{1/k}.
\end{equation*}
Suppose $a_{\bsi}=a_{i_1,\dots,i_{\ell}}$ with $0<|\bsi|\leq k$ is a coefficient of $h_d$, corresponding to $x_1^{i_1}\cdots x_{\l}^{i_{\l}}$, that is not divisible by $p$. Further, assume that $i_1>0$, as if $i_1=0$ then we could just relabel our coordinates. In this case, for each $0\leq w \leq v$, $\gcd(\text{cont}(g_{\tilde{\bss}}),p^v)=p^w$ only if $p^w\mid s_2^{i_2}\cdots s_{\ell}^{i_{\l}}$, so in particular $p^{\lceil w/k \rceil}\mid s_2\cdots s_{\ell}$, which occurs for fewer than $(w+1)^{\ell-1}p^{v(\ell-1)-w/k}$ choices of $\tilde{\bss}$. In particular, 
\begin{align*}\sum_{\tilde{s}\in [p^v]^{\ell-1}}\gcd(\text{cont}(g_{\tilde{\bss}}),p^v)^{1/k}&\leq \sum_{w=0}^{v} (w+1)^{\ell-1}p^{v(\ell-1)-w/k}p^{w/k} \\ &\leq (v+1)^{\ell}p^{v(\ell-1)}. \end{align*}  
The resulting bound on the exponential sum modulo $p^v$ is a constant depending on $h$ times $$p^{v(1-1/k)}(v+1)^{\ell} p^{v(\l-1)}=(v+1)^{\ell}p^{v(\ell-1/k)},$$ as required. Having accounted for all prime power divisors of $q$, the proof is complete.
\end{proof}

\begin{corollary}\label{loccor} If $(a,q)=1$, then $$|\mathcal{G}(a,q)|\ll_h\prod_{\substack{p\leq Y \\ p \nmid q}} \left(1- \frac{j_{d}(p)}{((p-\epsilon_{d}(p))p^{\gamma_{d}(p)-1})^{\l}} \right) \cdot \begin{cases} C^{\omega(q)} q^{1/2} & \l=1, \ q\leq Y \\  \left(q/\vphi(q)\right)^C((k-1)^2+2)^{\omega(q)} q & \l=2, \ q\leq Y \\ C^{\omega(q)}q^{\l-3/2} & \l\geq 3, \ q\leq Y \\ C^{\omega(q)}\tau(q)^{\l} q^{\l-1/k}  & q>Y \end{cases},$$  where $\tau(q)=\sum_{m\mid q} 1$ and $C=C(h)$.
\end{corollary}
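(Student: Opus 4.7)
The plan is to take the product decomposition of $\mathcal{G}(a,q)$ provided by Lemma \ref{locgen} and bound each local factor case by case, using $j_d(p) \ll_h 1$ from Lemma \ref{gradcor}. The product $\prod_{p \leq Y,\, p \nmid q}(1 - j_d(p)/((p-\epsilon_d(p))p^{\gamma_d(p)-1})^{\ell})$ already appears on the right-hand side of the corollary, so no further work is needed on that piece. The overall constant $C_1 = C_1(h)$ and any $O_h(1)$ factors from individual local bounds will be absorbed into the $\ll_h$ notation and the factor $C^{\omega(q)}$.

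For $q \leq Y$, every prime factor $p$ of $q$ appears in the small-prime product of Lemma \ref{locgen}, and for all but finitely many primes $\gamma_d(p)=1$, forcing $v_p=1$ (the finitely many exceptions are absorbed into $C_1$). When $\ell = 1$, each local factor is $(k-1)p^{1/2} + 1 + j_d(p) \ll_h p^{1/2}$, yielding $C^{\omega(q)} q^{1/2}$. When $\ell \geq 3$, the local bound is dominated by $p^{\ell - 3/2}$ (which coincides with $p^{\ell/2}$ in the borderline case $\ell=3$), so each factor is $\ll_h p^{\ell - 3/2}$, producing $C^{\omega(q)} q^{\ell - 3/2}$.

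The most delicate case, and what I expect to be the main obstacle, is $\ell = 2$ with $q \leq Y$. There each local factor equals $(k-1)^2 p + (2p-1) + j_d(p) = ((k-1)^2+2)p + O_h(1) = ((k-1)^2+2)p\bigl(1 + O_h(1/p)\bigr)$. Taking the product over $p \mid q$ gives $((k-1)^2+2)^{\omega(q)} q$ multiplied by $\prod_{p \mid q}(1 + O_h(1/p))$. Since $q/\vphi(q) = \prod_{p \mid q}(1 + 1/(p-1))$, Bernoulli's inequality $(1 + 1/(p-1))^C \geq 1 + C/(p-1) \geq 1 + O_h(1/p)$ shows that $\prod_{p \mid q}(1 + O_h(1/p)) \leq (q/\vphi(q))^C$ for $C = C(h)$ sufficiently large, yielding the stated bound.

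Finally, for $q > Y$, we partition the prime factors of $q$ into those at most $Y$ and those exceeding $Y$. Primes $p \leq Y$ dividing $q$ still satisfy $v_p = 1$ (else the contribution vanishes by Lemma \ref{locgen}), and from the analysis above each contributes a factor at most $C p^{\ell/2}$ when $\ell \leq 2$ or $C p^{\ell - 3/2}$ when $\ell \geq 3$; in every case this is at most $C p^{\ell - 1/k}$, since $\ell/2 \leq \ell - 1/k$ for $\ell \geq 1$ and $k \geq 2$, and $\ell - 3/2 \leq \ell - 1/k$ for $k \geq 1$. Primes $p > Y$ with multiplicity $v$ contribute $C_2(v+1)^{\ell} p^{v(\ell - 1/k)}$ directly from Lemma \ref{locgen}. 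Multiplying over all $p \mid q$ and using the multiplicativity identity $\prod_{p \mid q}(v_p + 1)^{\ell} = \tau(q)^{\ell}$ yields $C^{\omega(q)} \tau(q)^{\ell} q^{\ell - 1/k}$, completing the proof.
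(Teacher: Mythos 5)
Your proposal is correct and is precisely the deduction the paper leaves implicit: the corollary is stated without proof immediately after Lemma \ref{locgen}, and your case analysis — bounding each local factor using $j_d(p)\ll_h 1$, absorbing the finitely many primes with $\gamma_d(p)>1$ into the constant, and handling the $\ell=2$ error factor via $(q/\vphi(q))^C$ — is the intended argument. No discrepancies with the paper's approach.
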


\subsection{Minor arc estimate}

When $\alpha$ is not close to a rational with small denominator, we use the following variant of Weyl's inequality, which is a version of \cite[Theorem 4.1]{lipan}. For $k\in \N$, let $K=2^{10k}$.
\begin{lemma}[Lemma 12, \cite{Rice}]\label{weyl3} Suppose $g(x)=a_0+a_1x+\cdots+a_kx^k \in \Z[x]$ with $a_k>0$, $X, d\in \N$, and $r\in \Z$. If $U\geq \log X$,
$a_k \gg |a_{k-1}| + \cdots +|a_0|,$ and $d,|r|,a_k \leq U^k$, then 
\begin{equation*}\sum_{\substack{x\leq X \\ r+dx \in \P}} \log(r+dx)e(g(x)\alpha) \ll \frac{X}{U}+U^BX^{1-c}
\end{equation*} 
for $B=B(k)$ and $c=c(k)>0$, provided $|\alpha -a/q| < q^{-2}$  for some $U^{K} \leq q \leq g(X)/U^{K}$ and $(a,q)=1$.
\end{lemma}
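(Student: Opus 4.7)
The plan is to follow the standard Vaughan-identity-plus-Weyl-differencing strategy for exponential sums over primes weighted by a polynomial phase, as in \cite{lipan} and the original \cite{Rice}. First I would substitute $n=r+dx$ to rewrite the sum as a sum over primes $n \leq r+dX$ in the arithmetic progression $n \equiv r \pmod{d}$ weighted by the polynomial phase $G(n)\alpha$, where $G(n)=g((n-r)/d)$ has integer translates of degree $k$ with leading coefficient $a_k/d^k$. The coefficient hypothesis $a_k \gg |a_{k-1}|+\cdots+|a_0|$ together with $a_k,d,|r|\leq U^k$ guarantees that all coefficients of $G$, and all products like $k!\,a_k$ that will arise later, are at most polynomial in $U$. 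Replacing $\log n$ by $\Lambda(n)$ at the cost of an $O(X^{1/2}\log X)$ error that is absorbed into $X/U$ reduces the task to estimating $\sum_{n \equiv r (d)} \Lambda(n)\, e(G(n)\alpha)$.

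Next I would apply Vaughan's identity with cutoff parameters $u=v=U^{B_0}$ for a suitable $B_0=B_0(k)$, yielding two families of sums. For the Type I piece $S_I = \sum_{m \leq M} c_m \sum_{n \in I_m} e(G(mn)\alpha)$ with $|c_m| \leq \log m$ and $M \leq U^{B_0 k}$, I would view the inner sum as an exponential sum with polynomial phase of degree $k$ in $n$ whose leading coefficient is $\alpha\, a_k m^k/d^k$. The congruence condition $mn \equiv r \pmod d$ can be resolved by splitting into residue classes at cost $U^{O(1)}$. The classical Weyl inequality then yields cancellation $(N/m)^{1-c_1(k)}$ provided $\alpha\,a_k m^k/d^k$ is well-approximated by a rational with denominator in the correct range, which follows from the diophantine hypothesis on $\alpha$ since multiplying by $a_k m^k/d^k$ merely shifts $q$ by a factor of $U^{O(1)}$.

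For the Type II piece $S_{II}=\sum_{M<m\leq 2M}\sum_n a_m b_n\, e(G(mn)\alpha)$ with $U^{B_0}\leq M \leq X/U^{B_0}$ and $|a_m|,|b_n|\ll \tau(m)^C\tau(n)^C$, I would apply Cauchy--Schwarz in $m$ and expand the square to reach $\sum_{m,m'}\sum_n e\bigl((G(mn)-G(m'n))\alpha\bigr)$. The difference has degree $k$ in $n$ with leading coefficient $a_k(m^k-m'^k)/d^k$, and $k-1$ iterations of Weyl differencing reduce the problem to counting integer points on a lattice controlled by $\alpha\,k!\,a_k/d^k$, whose count is bounded using the diophantine approximation hypothesis. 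Assembling the two bounds with a final optimization over the Vaughan parameters produces the claimed $X/U + U^B X^{1-c}$.

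The main obstacle is bookkeeping: we need to ensure that at every stage, the effective leading coefficient times $\alpha$ remains in the minor-arc regime, despite the accumulation of factors $a_k$, $d^k$, $m^k$, $m^k - m'^k$, and $k!$ inside the approximant. The explicit exponent $K=2^{10k}$ in the hypothesis $U^K \leq q \leq g(X)/U^K$ is chosen precisely to be larger than the cumulative blow-up of these factors across all $O(2^{k-1})$ Weyl differencings and all divisor ranges arising in Vaughan's identity. Once this is verified, standard bounds on divisor sums and the Siegel--Walfisz theorem (or its explicit form used elsewhere in the paper) handle the residue-class splitting, and the resulting constants $B=B(k)$ and $c=c(k)$ depend only on $k$.
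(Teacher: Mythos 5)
The paper does not actually prove this lemma---it is imported verbatim from \cite{Rice} (Lemma 12 there, itself a variant of \cite[Theorem 4.1]{lipan})---and your plan (Vaughan's identity into Type I/II sums, Weyl's inequality for the Type I piece, Cauchy--Schwarz followed by Weyl differencing for the bilinear piece, with the large exponent $K=2^{10k}$ absorbing the degradation of the rational approximation to $\alpha$ as the leading coefficient accumulates factors of $a_k$, $d^k$, $m^k$, and $k!$) is exactly the argument used in those sources, and the diagonal terms of the Cauchy--Schwarz step together with the choice of Vaughan cutoffs are indeed what produce the $X/U$ term. The sketch is sound; the only stray remark is the appeal to Siegel--Walfisz, which plays no role in this purely minor-arc estimate.
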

\noindent Using Lemma \ref{weyl3} to exploit cancellation in only one variable, combined with the techniques of the proof of \cite[Lemma 7.12]{DR} to account for the sieve, yields the following.
\begin{corollary} \label{weyl4} If $C\geq 1$ and $|\alpha-a/q|<q^{-2}$, $(a,q)=1$, for some $\cQ^{C} \leq q \leq M^k/\cQ^{C}$, then $$\left|S(\alpha) \right| \ll_h \cQ^{-C/K}M^{\l} + \cQ^{B}M^{\l-c},$$ where $B=B(C,k)$ and $c=c(k)>0$.
\end{corollary}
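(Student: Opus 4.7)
The plan is to reduce $S(\alpha)$ to a one-dimensional Weyl-type sum over primes in a single variable, apply Lemma \ref{weyl3}, and handle the coupled multivariate sieve conditions via the inclusion-exclusion expansion developed in the proof of \cite[Lemma 7.12]{DR}.

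Writing $\bsn = (\tilde{\bsn}, n_\l)$ with $\tilde{\bsn} = (n_1,\ldots,n_{\l-1})$ and grouping the factors of $\nu_d$ by the variables they involve, we obtain
$$S(\alpha) = \sum_{\tilde{\bsn} \in [M]^{\l-1}} L(\tilde{\bsn}) \cdot S_{\tilde{\bsn}}(\alpha),$$
where $L(\tilde{\bsn})$ packages the factors of $\nu_d$ depending only on $\tilde{\bsn}$ (namely $\prod_{i<\l}\log((r_d)_i+dn_i)$ together with the prime restrictions on the first $\l-1$ coordinates), and $S_{\tilde{\bsn}}(\alpha)$ is a one-dimensional exponential sum in $n_\l$ weighted by $\log((r_d)_\l+dn_\l)$ and restricted to $(r_d)_\l+dn_\l \in \P$, of the shape required by Lemma \ref{weyl3}, up to the residual sieve conditions $\grad h_d(\tilde{\bsn},n_\l)\not\equiv \bszero \pmod{p^{\gamma_d(p)}}$ for $p\leq Y$. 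These residual conditions couple $n_\l$ to $\tilde{\bsn}$; we expand them via inclusion-exclusion over primes $p\leq Y$, exactly as in the proof of \cite[Lemma 7.12]{DR}. After truncation at a suitable level, the sieve decomposes into $\cQ^{O_h(1)}$ arithmetic-progression conditions on $n_\l$, and each resulting inner sum is directly amenable to Lemma \ref{weyl3}.

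We apply Lemma \ref{weyl3} with $U=\cQ^{C/K}$. The hypothesis $U\geq \log M$ holds for $C$ large, since $\log M \ll \log N$ and $U = \exp(c_0C\sqrt{\log N}/K)$; the inequalities $d,|(r_d)_\l|,a_k \leq U^k$ follow from $d\leq \cQ^{O_h(1)}$, $|(r_d)_\l|\leq d$, and the leading coefficient of $g_{\tilde{\bsn}}(x) = h_d(\tilde{\bsn},x)$ being $\ll_h 1$. The required range $U^K\leq q \leq g_{\tilde{\bsn}}(M)/U^K$ follows from the hypothesis $\cQ^C\leq q\leq M^k/\cQ^C$ once $C$ exceeds a suitable multiple of $K$, using $g_{\tilde{\bsn}}(M) \asymp_h M^k$ for generic $\tilde{\bsn}$. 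Lemma \ref{weyl3} then yields $|S_{\tilde{\bsn}}(\alpha)| \ll_h \log^{\l-1}(dM)(M/U+U^BM^{1-c})$; summing over $\tilde{\bsn}\in [M]^{\l-1}$ and absorbing the $\cQ^{O_h(1)}$ from the sieve expansion and the log factors into a slightly larger exponent $B'$ gives $|S(\alpha)|\ll_h \cQ^{-C/K}M^{\l} + \cQ^{B'}M^{\l-c}$, which is the claimed bound.

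The main obstacle is verifying the coefficient-size hypothesis $a_k \gg |a_{k-1}|+\cdots+|a_0|$ of Lemma \ref{weyl3}, which is subtle in the multivariate setting: when $\tilde{\bsn}$ ranges over $[M]^{\l-1}$, the lower coefficients of $g_{\tilde{\bsn}}$ are nonconstant polynomials in $\tilde{\bsn}$ and can be polynomially large in $M$, whereas the leading coefficient (the coefficient of $x_\l^k$ in $h_d$) is bounded in $M$. This is handled exactly as in \cite[Lemma 7.12]{DR}: by selecting the isolated coordinate so that $h_d$ admits a pure degree-$k$ term along it—achievable via a linear change of variables, exploiting that $h_d^k$ is smooth and hence nonvanishing on some coordinate axis—and by performing a dyadic restriction of $\tilde{\bsn}$ to scale $\asymp M'\leq M$, the offending coefficient ratios are absorbed into $U^B$ at the cost of enlarging $B$ by an amount depending only on $k$ and $\l$.
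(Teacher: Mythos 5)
Your overall strategy---freeze $\tilde{\bsn}$, expand the sieve by truncated inclusion--exclusion as in \cite[Lemma 7.12]{DR}, and apply Lemma \ref{weyl3} to the resulting one-variable sums over primes with $U=\cQ^{C/K}$---is exactly the route the paper indicates (the paper gives only a one-sentence pointer here), and you have correctly isolated the crux: the hypothesis $a_k \gg |a_{k-1}|+\cdots+|a_0|$ of Lemma \ref{weyl3}. The problem is that your resolution of that crux does not work. For $g_{\tilde{\bsn}}(x)=h_d(\tilde{\bsn},x)$ the leading coefficient $a_k$ is the (bounded, $\tilde{\bsn}$-independent) coefficient of $x_\l^k$ in $h_d$, while $a_j(\tilde{\bsn})$ for $j<k$ is a polynomial of degree $k-j$ in $\tilde{\bsn}$, so $|a_j|/a_k$ is genuinely of size $M^{k-j}$ for a positive proportion of $\tilde{\bsn}\in[M]^{\l-1}$. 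No dyadic restriction changes this, and these ratios cannot be ``absorbed into $U^B$'': $U^B=\cQ^{O(1)}=\exp(O(\sqrt{\log N}))$ is subpolynomial in $M\gg N^{1/2k}$, whereas the ratios are polynomial in $M$; more to the point, the hypothesis of Lemma \ref{weyl3} is a hard inequality among the coefficients, with no $U$-dependent slack into which anything could be absorbed. Separately, the linear change of variables you invoke to manufacture a pure $x_\l^k$ term is not available here: anything beyond a permutation of coordinates destroys the constraint $\bsr_d+d\bsn\in\P^{\l}$, which is imposed coordinatewise.

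What is actually needed is to go back to the source of Lemma \ref{weyl3}, namely \cite[Theorem 4.1]{lipan}: the correct input is a prime-variable Weyl inequality whose Diophantine hypothesis concerns only the rational approximation of $\alpha$ times the \emph{leading} coefficient of the one-variable polynomial, with no condition comparing the lower-order coefficients to $a_k$. In that form the estimate is uniform in $\tilde{\bsn}$ precisely because $a_k$ does not depend on $\tilde{\bsn}$ (after arranging, or treating separately the failure of, a nonzero pure power $x_j^k$ in some coordinate $j$---note a permutation alone need not achieve this, e.g.\ $x^2y+xy^2$ is smooth with no pure cube). As written, your argument applies Lemma \ref{weyl3} outside its hypotheses. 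Two smaller points: for $\l=1$ the weight $\nu_d$ carries the factor $h_d'(n)\asymp M^{k-1}$, which must first be removed by partial summation before any such lemma applies; and the verification that the truncated sieve moduli satisfy $dm\le U^k=\cQ^{Ck/K}$ requires $C$ large relative to constants depending on $h$, which you note but which sits uneasily with the corollary's hypothesis ``$C\ge 1$''.
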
 

\subsection{Proof of \eqref{SmajII} and \eqref{SminII} for $\l \geq 2$} Fixing $\alpha\in \T$, and letting $C=C(h)$ be a sufficiently large constant, the pigeonhole principle guarantees the existence of $1\leq q \leq M^k/\cQ^C$ and $(a,q)=1$ such that $$\left|\alpha-\frac{a}{q} \right|<\frac{\cQ^{C}}{qM^k}. $$ Letting $\beta=\alpha-a/q$, if $q\leq \cQ^{C}$, we have by Lemma \ref{Sasym} that 
\begin{equation}\label{eg1} S(\alpha) =\left(\frac{\vphi(d)}{\vphi(qd)} \right)^{\l}w_d^{-1}\mathcal{G}(a,q)\int_{[1,M]^{\l}}\prod_{i=1}^{\l}\left(1-\chi((r_d)_i)(dx)^{\rho-1}\right)e(h_d(\bsx)\beta)d\bsx + O_{h}(E), \end{equation}
so in particular \begin{equation*}|S(\alpha)|\ll_h \left(\frac{\vphi(d)}{\vphi(qd)} \right)^{\l}w_d^{-1} |\mathcal{G}(a,q)|T, 
\end{equation*}
which combines with Corollary \ref{loccor} to yield \eqref{SmajII} if $q\leq Q$ and $|\beta|<\gamma$ and \eqref{SminII} if $Q\leq q\leq \cQ^{C}$. Further, assuming $x_1$ appears in a degree $k$ term of $h_d$ (if not, relabel coordinates), if $q\leq \cQ^C$ and $|\beta|\geq \gamma$, standard van der Corput estimates (see for example Lemma 2.8 in \cite{vaughan}) give 
$$\left|\int_1^M e(h_d(\bsx)\beta)dx_1\right| \ll_h |J\beta|^{-1/k} \ll \eta M.$$  Integration by parts yields 
\begin{align*}\left|\int_1^M (1-\chi((r_d)_1)(dx_1)^{\rho-1})e(h_d(\bsx)\beta)dx_1\right| &\ll \eta \left(M-\frac{\chi((r_d)_1)(dM)^{\rho}}{d\rho}+2(1-\rho)M\right),
\end{align*} which combines with \eqref{eg1} to yield \eqref{SminII}. 
Finally, again exploiting cancellation in only a single variable, \eqref{SminII} holds by Corollary \ref{weyl4} if $\cQ^{C}\leq q \leq M^k/\cQ^{C}$.  \qed

\noindent The proof of \eqref{SminII} for $\l=1$ is similar to the corresponding proof above, with partial summation when appropriate to account for the derivative weight. What requires some final attention, however, is the estimate \eqref{1varest} for the $L^2$ mass of $S(\alpha)$ over a full major arc when $\l=1$.

\subsection{Proof of \eqref{1varest} for $\l=1$} Suppose $q\leq Q$ and $\alpha=a/q+\beta$ with $(a,q)=1$ and $|\beta|<\gamma$. By Lemma \ref{1asym}, we have 
\begin{equation*}S(\alpha) =\frac{\vphi(d)}{\vphi(qd)} w_d^{-1}\mathcal{G}(a,q)\int_1^M h_d'(x)\left(1-\chi(r_d)(dx)^{\rho-1}\right) e(h_d(x)\beta)dx+ O_{h}\left(E\right).  
\end{equation*} Further, we see that
\begin{equation*}\left|\int_1^M h_d'(x)e(h_d(x)\beta) \right| = \left|\int_{h_d(1)}^{h_d(M)} e(y\beta) dy \right| \ll |\beta|^{-1}.
\end{equation*} Integration by parts then yields
\begin{equation*} |S(\alpha)|\ll_h \frac{\vphi(d)}{\vphi(qd)} w_d^{-1}|\mathcal{G}(a,q)|T \min\{1,(L|\beta|)^{-1}\}.
\end{equation*} Applying Corollary \ref{loccor} and splitting the integral $$\int_{|\beta|<\gamma} |S(a/q+\beta)|^2 d\beta $$ into the regions $|\beta|<L^{-1}$ and $|\beta|\geq L^{-1}$ gives the required estimate. \qed

\noindent \textbf{Acknowledgements:} The first author's research was partially supported by NSF grant DMS-2302394. The second author's research was partially supported by an AMS-Simons research grant for PUI faculty.\\

\end{document}